\title{Discrete quantum subgroups of free unitary quantum groups}
\author{Amaury Freslon}
\email{amaury.freslon@universite-paris-saclay.fr}
\address{Laboratoire de Mathématique d’Orsay, CNRS, Université Paris-Saclay, 91405 Orsay, France.}
\author{Moritz Weber}
\email{weber@math.uni-sb.de}
\address{Saarland University, Fachbereich Mathematik, Postfach 151159, 66041 Saarbr\"ucken, Germany}
\date{}
\thanks{The first author has been supported by the ANR grant ``Noncommutative analysis on groups and quantum groups'' (ANR-19-CE40-0002). The second author has been supported by the SFB-TRR 195 and this work is a contribution to the SFB-TRR 195. Also, he has been supported by the Heisenberg program of the DFG and OPUS-LAP \emph{Quantum groups, graphs and symmetries via representation theory}.}
\theoremstyle{plain}
\newtheorem{thm}{Theorem}[section]
\newtheorem{lem}[thm]{Lemma}
\newtheorem{prop}[thm]{Proposition}
\newtheorem{cor}[thm]{Corollary}
\theoremstyle{definition}
\newtheorem{de}[thm]{Definition}
\newtheorem{ex}[thm]{Example}
\newtheorem{rem}[thm]{Remark}
\DeclareMathOperator{\Irr}{Irr}
\DeclareMathOperator{\id}{id}
\DeclareMathOperator{\Mor}{Mor}
\DeclareMathOperator{\Proj}{Proj}
\DeclareMathOperator{\QA}{Aut^{+}(M_{N}(\C))}
\DeclareMathOperator{\QAut}{Aut^{+}}
\DeclareMathOperator{\Rep}{Rep}
\DeclareMathOperator{\rl}{rl}
\DeclareMathOperator{\cb}{c}
\newcommand{\C}{\mathbb{C}}
\newcommand{\D}{\Delta}
\newcommand{\CC}{\mathcal{C}}
\newcommand{\G}{\mathbb{G}}
\newcommand{\HH}{\mathbb{H}}
\newcommand{\KK}{\mathbb{K}}
\newcommand{\N}{\mathbf{N}}
\newcommand{\NCc}{NC^{\circ\bullet}}
\newcommand{\PP}{\mathcal{P}}
\newcommand{\Pc}{P^{\circ\bullet}}
\newcommand{\Z}{\mathbf{Z}}
\renewcommand{\O}{\mathcal{O}}
\begin{document}

\begin{abstract}
We classify all  compact quantum groups whose C*-algebra sits inside that of the free unitary quantum groups $U_{N}^{+}$. In other words, we classify all discrete quantum subgroups of $\widehat{U}_{N}^{+}$, thereby proving a quantum variant of Kurosh's theorem to some extent. This yields interesting families which can be described using free wreath products and free complexifications. They can also be seen as quantum automorphism groups of specific quantum graphs which generalize finite rooted regular trees, providing explicit examples of quantum trees.
\end{abstract}

\maketitle

\section{Introduction}

It is one of the most natural questions, once an algebraic structure is defined, to try to understand its sub-objects. For instance, as soon as finite groups are defined, one tries to classify all  subgroups of a given one. However, for infinite groups the question quickly becomes very complicated, as the following fact shows: given an integer $N$, any group with $N$ generators gives rise to a quotient of the free group $\mathbb{F}_{N}$, which then also yields a  normal subgroup of $\mathbb{F}_{N}$. Classifying all subgroups of $\mathbb{F}_{N}$ therefore seems an untractable problem. Nevertheless, a lot can be said about the structure of these subgroups: by a theorem of Kurosh, they are all free groups.

The purpose of this work is to consider similar questions in the setting of discrete quantum groups. It has long been recognized that the free unitary quantum groups $U_{N}^{+}$, see \cite{wang1995free}, or more precisely their dual discrete quantum groups $\widehat{U}_{N}^{+}$, play a role in the quantum theory analogous to that of free groups; see for instance \cite{banica2007note, banica1997groupe, vaes2007boundary, vergnioux2013k} and many more articles. We therefore simply ask the following question: what are the discrete quantum subgroups of $\widehat{U}_{N}^{+}$, and what can be said about them? The notion of a free quantum group is not rigorously defined in the setting of discrete quantum groups, but there is a wealth of examples sharing some kind of freeness properties which are based on the combinatorics of non-crossing partitions (or on free fusion semi-rings). Our first result is that discrete quantum subgroups of $\widehat{U}_{N}^{+}$ all belong to that class, which is, in a way, a quantum analogue of Kurosh's theorem. However, note a subtle difference: In Kurosh's theorem, all subgroups of $\mathbb{F}_{N}$ are free groups --  and vice versa, all free groups are subgroups of $\mathbb{F}_{N}$. In the quantum setting, we only have that all discrete quantum subgroups of $\widehat{U}_{N}^{+}$ have free fusion semi-rings, but not all free fusion semi-rings coming from quantum groups arise in this way. In any case, we can completely classify all discrete quantum subgroups of $\widehat{U}_{N}^{+}$ and describe them explicitely.

As it turns out, we will work these result backwards. First, we will make use of the rich combinatorial structure underlying $U_{N}^{+}$, which involves non-crossing partitions, to give an abstract classification result for discrete quantum subgroups of $\widehat{U}_{N}^{+}$. In this context, we will also make use of the theory of easy quantum groups, see \cite{banica2009liberation, weber2015introduction, weber2012classification, raum2013full, freslon2023compact} amongst others.
The key tool for our classification is the notion of \emph{module of projection partitions}, introduced in \cite{freslon2021tannaka} for a completely different purpose. We prove that this notion exactly captures dual quantum subgroups, and admits a simple combinatorial description which is easier to work with. Based on this, we will then proceed to give an explicit description of all the quantum subgroups that we found. It is a rather surprising consequence of our work that they can be described from the quantum automorphism group of $M_{N}(\C)$ using two standard quantum group constructions: the free wreath product and the free complexification. Also, it links with recent work on (quantum) graphs \cite{wasilewski2023quantum, brownlowe2023self}, as we will show, allowing for a definition of quantum rooted regular trees.

Let us now outline the contents of the paper. Section \ref{sec:preliminaries} contains preliminary material concerning partitions, compact quantum groups and easy quantum groups, mainly to fix notations and terminology. Note that for practicality, we take the compact point of view and therefore focus on the notion of ``dual quantum subgroup'' (see Definition \ref{de:dualquantumsubgroup}) instead of talking about discrete quantum subgroups of the dual, even though this is equivalent. Then, Section \ref{sec:correspondence} makes the connection between dual quantum subgroups and the modules of projective partitions introduced in \cite{freslon2021tannaka}, our first main result being Theorem \ref{thm:correspondence}.
As an illustration of the power of modules of projective partitions, we first classify in Section \ref{sec:orthogonal} the dual quantum subgroups of orthogonal easy quantum groups, see the overview on page \pageref{table:overvieworthogonalmodules}. Once this is done, we turn in Section \ref{sec:unitary} to the heart of this work and classify all dual quantum subgroups of the free unitary quantum groups, see Theorem \ref{thm:classificationunitary}. In Section \ref{sec:quantumsubgroups}, we then provide an explicit description of all discrete quantum subgroups of $\widehat{U_N^+}$ involving free wreath products and free complexifications, our second main result being Theorem \ref{thm:descriptionquantumsubgroups}.
Subsequently, we define (rooted tracial $M_N(\mathbb C)$-regular) quantum trees in Section \ref{sec:quantumtrees} and we conclude with yet another description of the main one-parameter family of dual quantum subgroups of $U_{N}^{+}$ as quantum automorphism groups of such rooted quantum trees, see Theorem \ref{thm:quantumtrees}.

\section{Preliminaries}\label{sec:preliminaries}

In this preliminary section we will recall the basics concerning easy quantum groups first defined by Banica and Speicher \cite{banica2009liberation}, see also \cite{weber2015introduction, freslon2023compact}, starting with the notion of a partition which is at the heart of the theory.

\subsection{Colored partitions}

Easy quantum groups are based on the combinatorics of partitions, and in particular noncrossing ones. A \emph{partition} consists in two integers $k$ and $l$, and a partition of the set $\{1, \dots, k+l\}$. We think of it as an upper row of $k$ points, a lower row of $l$ points and some strings connecting these points. If the strings may be drawn in such a way that they do not cross, the partition will be said to be \emph{noncrossing}. The set of all partitions is denoted by $P$ and the set of all noncrossing partitions is denoted by $NC$.

A maximal set of points which are all connected in a partition is called a \emph{block}. We denote by $b(p)$ the number of blocks of a partition $p$, by $t(p)$ the number of \emph{through-blocks}, i.e. blocks containing both upper and lower points, and by $\beta(p) = b(p)-t(p)$ the number of \emph{non-through-blocks}. This work is concerned with a refinement of the notion of partitions: colored partitions.

\begin{de}
A \emph{(two-)colored partition} is a partition with the additional data of a color (black or white) for each point. The set of all colored partitions is denoted by $\Pc$ and the set of noncrossing colored partitions is denoted by $\NCc$.
\end{de}

In the example below, $p_{1}$ has crossings while $p_{2}$ is a noncrossing colored partition.

\begin{center}
\begin{tikzpicture}[scale=0.5]
\draw (0,-3) -- (0,3);
\draw (-1,-3) -- (-1,-2);
\draw (1,-3) -- (1,-2);
\draw (-1,-2) -- (1,-2);
\draw (-2,-3) -- (2,3);
\draw (2,-3) -- (-2,3);

\draw (-2,3) node[above]{$\circ$};
\draw (0,3) node[above]{$\bullet$};
\draw (2,3) node[above]{$\circ$};

\draw (-2,-3) node[below]{$\circ$};
\draw (-1,-3) node[below]{$\bullet$};
\draw (0,-3) node[below]{$\bullet$};
\draw (2,-3) node[below]{$\circ$};
\draw (1,-3) node[below]{$\circ$};

\draw (-2.5,0) node[left]{$p_{1} = $};
\end{tikzpicture}
\begin{tikzpicture}[scale=0.5]
\draw (0,-1) -- (0,1);
\draw (-2,1) -- (2,1);
\draw (-2,1) -- (-2,3);
\draw (2,1) -- (2,3);
\draw (-1,2) -- (-1,3);
\draw (1,2) -- (1,3);
\draw (-1,2) -- (1,2);

\draw (-1,-1) -- (1,-1);
\draw (-1,-1) -- (-1,-3);
\draw (1,-1) -- (1,-3);

\draw (-2,3) node[above]{$\circ$};
\draw (-1,3) node[above]{$\bullet$};
\draw (0,3) node[above]{$\bullet$};
\draw (2,3) node[above]{$\circ$};
\draw (1,3) node[above]{$\circ$};

\draw (-1,-3) node[below]{$\circ$};
\draw (1,-3) node[below]{$\circ$};

\draw (-2.5,0) node[left]{$p_{2} = $};
\end{tikzpicture}
\end{center}

From now on, the word ``partition'' will always mean ``two-colored partition''. Partitions can be combined using the following \emph{category operations}:
\begin{itemize}
\item  If $p\in \Pc(k, l)$ and $q\in \Pc(k', l')$, then $p\odot q\in \Pc(k+k', l+l')$ is their \emph{horizontal concatenation}, i.e. the first $k$ of the $k+k'$ upper points are connected by $p$ to the first $l$ of the $l+l'$ lower points, whereas $q$ connects the remaining $k'$ upper points with the remaining $l'$ lower points.
\item If $p\in \Pc(k, l)$ and $q\in \Pc(l, m)$ are such that the coloring of the lower row of $p$ is the same as the coloring of the upper row of $q$, then $qp\in \Pc(k, m)$ is their \emph{vertical concatenation}, i.e. $k$ upper points are connected by $p$ to $l$ middle points and the lines are then continued by $q$ to $m$ lower points. This process may produce loops in the partition. More precisely, consider the set $L$ of elements in $\{1, \dots, l\}$ which are not connected to an upper point of $p$ nor to a lower point of $q$. The lower row of $p$ and the upper row of $q$ both induce partitions of the set $L$. The maximum (with respect to inclusion) of these two partitions is the \emph{loop partition} of $L$, its blocks are called \emph{loops} and their number is denoted by $\rl(q, p)$. To complete the operation, we remove all the loops.
\item If $p\in \Pc(k, l)$, then $p^{*}\in \Pc(l, k)$ is the partition obtained by reflecting $p$ with respect to the horizontal axis (without changing the colors).
\item If $p\in \Pc(k, l)$, then we can shift the very left upper point to the left of the lower row (or the converse) and change its color. We do not change the strings connecting the points in this process. This gives rise to a partition in $\Pc(k-1, l+1)$ (or in $\Pc(k+1, l-1)$), called a \emph{rotated version} of $p$. We can also rotate partitions on the right.
\item Using, the category operations above, one can \emph{reverse} a partition $p$  by rotating all its upper points to the lower row and all its lower points to the upper row. This gives a new partition $\overline{p}$. Equivalently, $\overline{p}$ is obtained by reflecting $p$ with respect to a vertical axis and then inverting the colors.
\end{itemize}

As an example, we give the vertical concatenation of the two partitions $p_{1}$ and $p_{2}$ defined above:
\begin{center}
\begin{tikzpicture}[scale=0.5]
\draw (0,-1) -- (0,1);
\draw (-1,1) -- (1,1);
\draw (-1,1) -- (-1,2);
\draw (1,1) -- (1,2);

\draw (-1,-1) -- (1,-1);
\draw (-1,-1) -- (-1,-2);
\draw (1,-1) -- (1,-2);

\draw (-1,2) node[above]{$\circ$};
\draw (0,2) node[above]{$\bullet$};
\draw (1,2) node[above]{$\circ$};

\draw (-1,-2) node[below]{$\circ$};
\draw (1,-2) node[below]{$\circ$};

\draw (-1.5,0) node[left]{$p_{2}p_{1} = $};
\end{tikzpicture}
\end{center}

There are four ways of coloring the partition $\vert\in \Pc(1, 1)$. If the two points are white (resp. black), we will call it the \emph{white identity} (resp. \emph{black identity}) partition and denote it by $\id_{\circ}$ (resp. $\id_{\bullet}$). Note that these two partitions are rotated versions of one another.

\begin{de}\label{defcategorypartitions}
A \emph{category of partitions} $\CC$ is the data of a collection of sets $\CC(k, l)$ of colored partitions, for all integers $k$ and $l$, which is stable under the above category operations and contains the white identity partition (hence also the black identity partition).
\end{de}

Let us further introduce two specific partitions which will play an important role hereafter. We define the \emph{duality partitions} as
\begin{center}
\begin{tikzpicture}[scale=0.5]
\draw (-1, 0)--(-1,-1);
\draw (1,0)--(1,-1);
\draw (-1,-1)--(1,-1);

\draw (-1,0)node[above]{$\circ$};
\draw (1,0)node[above]{$\bullet$};
\draw (-1.25, -0.25)node[left]{$D_{\circ\bullet} = $};
\end{tikzpicture}
\text{ and }
\begin{tikzpicture}[scale=0.5]
\draw (-1, 0)--(-1,-1);
\draw (1,0)--(1,-1);
\draw (-1,-1)--(1,-1);

\draw (-1,0)node[above]{$\bullet$};
\draw (1,0)node[above]{$\circ$};
\draw (-1.25, -0.25)node[left]{$D_{\bullet\circ} = $};
\end{tikzpicture}
\end{center}
The crucial notion for the study of the representation theory of easy quantum groups is that of a \emph{projective partition}.

\begin{de}\label{de:projectivepartitions}
A partition $p\in \Pc(k, k)$ is said to be \emph{projective} if it satisfies $pp = p = p^{*}$. We denote by $\Proj_{\CC}$ the set of projective partitions in a category of partitions $\CC$.
\end{de}

There are actually many of them, according to the following result (see \cite[Prop 2.12]{freslon2013representation}):

\begin{prop}\label{prop:characterizationprojective}
A partition $p\in \Pc(k, k)$ is projective if and only if there exists a partition $r\in \Pc(k, k)$ such that $r^{*}r = p$.
\end{prop}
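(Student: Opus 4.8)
The statement is an equivalence, and one direction is immediate: if $p$ is projective, then $r = p$ works, since $r^{*}r = p^{*}p = pp = p$, using $p^{*} = p$ and $pp = p$. So the entire content lies in the converse. Assume $p = r^{*}r$ for some $r \in \Pc(k,k)$; I must establish $p^{*} = p$ and $pp = p$. Self-adjointness is formal: the reflection reverses vertical concatenation and is involutive, so $p^{*} = (r^{*}r)^{*} = r^{*}(r^{*})^{*} = r^{*}r = p$. Note that this already forces the colouring of the lower row of $p$ to agree with that of its upper row, so that the product $pp$ is well defined in the first place. The real work is idempotency.

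My plan for $pp = p$ is to first determine the block structure of $p = r^{*}r$ explicitly and then read off $pp$ from it. Since $r^{*}$ is the reflection of $r$, its blocks are the mirror images of those of $r$; write each block $B$ of $r$ as the union of its upper part $B \cap U$ and its lower part $B \cap L$, where $U$ and $L$ denote the upper and lower rows. In the vertical concatenation $r^{*}r$ — with $r$ on top and $r^{*}$ below, the common middle row being the lower row $L$ of $r$ — the governing principle is that a block of the top copy merges with a block of the bottom copy exactly when they share a point of the contracted middle row. Because the top copy and the bottom copy induce \emph{the same} partition of $L$ (the lower-row partition of $r$ and its mirror coincide), the gluing is diagonal, and since distinct blocks of $r$ are disjoint this merging occurs precisely when the two blocks are the mirror images of one and the same through-block. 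I therefore expect: each through-block $B$ of $r$ yields a symmetric through-block of $p$ whose upper and lower parts are both copies of $B \cap U$; each upper non-through-block of $r$ survives as an upper non-through-block of $p$, its mirror surviving as a lower non-through-block; and each lower non-through-block of $r$ lies entirely in the contracted middle row, hence becomes a loop and is removed. In particular $p$ is symmetric, its lower row carrying the mirror of the upper row.

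With this symmetric description in hand, $pp$ is computed by the same merging-and-loops analysis. Writing $\mathcal{A}$ for the upper parts of the through-blocks and $\mathcal{C}$ for the upper non-through-blocks of $r$, the family $\mathcal{A} \cup \mathcal{C}$ is a partition of the upper row, and the crucial observation is that both the lower row of the top copy of $p$ and the upper row of the bottom copy of $p$ induce exactly this same partition on the contracted middle row; the gluing is thus again diagonal. Consequently each symmetric through-block meets its own mirror and reproduces itself, each upper non-through-block of the top copy survives on the upper side, each lower non-through-block of the bottom copy survives on the lower side, and the only leftover components — a lower non-through-block of the top copy meeting the matching upper non-through-block of the bottom copy — lie entirely in the middle row and are discarded as loops. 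The surviving blocks are exactly those of $p$, whence $pp = p$ as partitions, the discarded loops contributing only to $\rl(p,p)$. This completes the argument.

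The main obstacle I anticipate is the combinatorial bookkeeping in the two block computations, and especially the loop-removal step: it is precisely the collapse of the crossed non-through-block pairs into removed loops that prevents $pp$ from being strictly finer or coarser than $p$, so this accounting must be done carefully. Tracking index sets faithfully through the reflection (so that the diagonal nature of each middle gluing is genuine) and keeping the two-colouring consistent throughout — using that the reflection preserves colours, so that all the vertical concatenations involved are indeed defined — are routine but necessary accompaniments.
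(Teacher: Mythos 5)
Your argument is correct. Note that the paper itself gives no proof of this proposition --- it is quoted from \cite[Prop 2.12]{freslon2013representation} --- so there is nothing in the text to compare against; but your direct verification is essentially the standard one from that reference. The two key points are exactly the ones you isolate: $(r^{*}r)^{*} = r^{*}r$ is formal from $(qp)^{*}=p^{*}q^{*}$, and idempotency reduces to the observation that in both compositions $r^{*}r$ and $(r^{*}r)(r^{*}r)$ the partitions induced on the contracted middle row by the top and bottom factors coincide, so the gluing of blocks is ``diagonal'' (each block meeting the middle row merges with exactly one partner, namely its mirror, and no chains form), while the non-through-blocks trapped in the middle row are exactly the loops that get discarded. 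Your bookkeeping of the three types of blocks of $r$ (through, upper non-through, lower non-through) and of how they reappear or vanish in $p$ and in $pp$ is accurate, and you correctly check that the colourings match so that $pp$ is defined. A marginally slicker packaging of the same computation is to first prove the single identity $rr^{*}r = r$ for every $r\in\Pc$ by the same diagonal-gluing analysis, from which $(r^{*}r)(r^{*}r) = r^{*}(rr^{*}r) = r^{*}r$ follows by associativity of vertical concatenation; but this buys nothing essential over what you wrote.
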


\subsection{Easy quantum groups}

Easy quantum groups are a family of compact quantum groups which can be constructed using categories of partitions. To explain how, we will first explain how partitions can give rise to linear maps between Hilbert spaces, and then briefly recall some notions on compact quantum groups before turning to the main definition.

\subsubsection{Partitions and linear maps}

The link between partitions and quantum groups lies in the following definition \cite[Def 1.6]{banica2009liberation}. Note that this definition does not involve the coloring of the partitions.

\begin{de}
Let $N$ be an integer and let $(e_{1}, \dots, e_{N})$ be the canonical basis of $\C^{N}$. For any partition $p\in \Pc(k, l)$, we define a linear map
\begin{equation*}
T_{p}:(\C^{N})^{\otimes k} \mapsto (\C^{N})^{\otimes l}
\end{equation*}
by the following formula:
\begin{equation*}
T_{p}(e_{i_{1}} \otimes \dots \otimes e_{i_{k}}) = \sum_{j_{1}, \dots, j_{l} = 1}^{n} \delta_{p}(i, j)e_{j_{1}} \otimes \dots \otimes e_{j_{l}},
\end{equation*}
where $\delta_{p}(i, j) = 1$ if and only if as soon as two points are connected by a string of the partition $p$, the corresponding indices of the multi-index $i = (i_{1}, \dots, i_{k})$ are equal. Otherwise, $\delta_{p}(i, j) = 0$.
\end{de}

The interplay between these maps and the category operations are given by the following rules proven in \cite[Prop. 1.9]{banica2009liberation}:

\begin{itemize}
\item $T_{p}^{*} = T_{p^{*}}$;
\item $T_{p}\otimes T_{q} = T_{p\otimes q}$;
\item $T_{pq} = N^{\rl(p, q)} T_{p}\circ T_{q}$.
\end{itemize}

In particular, by Proposition \ref{prop:characterizationprojective}, $T_{r}$ is always a multiple of a partial isometry, which is an orthogonal projection if and only if $r$ is projective. It should be stressed that the maps $T_{p}$ are not linearly independent in general. However, restricting to the noncrossing case rules out this problem (see e.g. \cite[Lem 4.16]{freslon2013representation} for a proof).

\begin{prop}\label{prop:linearindependence}
Let $\CC$ be a category of \emph{noncrossing} partitions and fix an integer $N\geqslant 4$. Then, for any $w, w'\in F$, the maps $(T_{p})_{p\in \CC(w, w')}$ are linearly independent.
\end{prop}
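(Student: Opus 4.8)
The plan is to establish linear independence of the maps $(T_p)_{p \in \CC(w,w')}$ by exhibiting, for each noncrossing partition $p$, a distinguished basis tensor on which $T_p$ has a ``diagonal'' behaviour that no other partition can replicate. Concretely, for a fixed noncrossing partition $p \in \CC(w,w')$, I would choose a multi-index $i = (i_1,\dots,i_k)$ that assigns \emph{distinct} values from $\{1,\dots,N\}$ to the distinct blocks of the upper row of $p$ (this is where $N$ being large enough matters, since we need enough colors to separate all blocks), and then read off the unique output multi-index $j$ determined by propagating these values through the strings of $p$. The point is that $T_p(e_{i_1}\otimes\cdots\otimes e_{i_k})$ will contain the basis vector $e_{j_1}\otimes\cdots\otimes e_{j_l}$ with coefficient $1$, and this particular ``canonical'' pairing $(i,j)$ should be realizable by $p$ but by essentially no other partition once we track how the block structure is forced.

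The key combinatorial step is to argue that the assignment $p \mapsto (i^{(p)}, j^{(p)})$ of such a canonical index pair is injective on $\CC(w,w')$, or more precisely that one can recover the partition $p$ from the pattern of equalities among the entries of $(i^{(p)}, j^{(p)})$. Here I would use the noncrossing hypothesis decisively: for a noncrossing partition, the block structure is rigid enough that labelling distinct blocks with distinct colors makes the partition reconstructible from the resulting colored sequence of points. First I would set up the matrix whose rows are indexed by partitions $p$ and columns by basis tensors $e_j^{\otimes}$, evaluated on the chosen inputs, and show it is (upper) triangular with nonzero diagonal under a suitable ordering of the partitions — for instance ordering by the number of blocks, or by refinement. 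The nonzero diagonal entries come from $\delta_p(i^{(p)}, j^{(p)}) = 1$ by construction, and the triangularity comes from showing that if $q$ is a different partition then $\delta_q(i^{(p)}, j^{(p)}) = 0$ unless $q$ is finer than (or otherwise ordered below) $p$.

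Suppose then we have a linear dependence $\sum_p \lambda_p T_p = 0$. Applying this to the canonical input tensor for a \emph{minimal} partition $p_0$ in the ordering and extracting the coefficient of $e_{j^{(p_0)}}^{\otimes}$, every term $\lambda_q \delta_q(i^{(p_0)}, j^{(p_0)})$ with $q \neq p_0$ vanishes by the ordering argument, leaving $\lambda_{p_0} = 0$. Iterating upward through the partially ordered set forces all $\lambda_p = 0$, giving linear independence.

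The main obstacle I anticipate is the heart of the combinatorial step: proving that distinct noncrossing partitions genuinely give distinct equality-patterns on the canonically chosen indices, and organizing this into a clean triangular structure. One must be careful that assigning distinct values to distinct blocks of $p$ does not accidentally force $\delta_q = 1$ for some incomparable $q$; this is precisely where the noncrossing condition is needed, as it is well known (and false in the crossing case, explaining the hypothesis) that the maps $T_p$ can satisfy linear relations when crossings are allowed. I would expect the cleanest route to invoke or reprove the standard fact that the Gram matrix of the vectors $T_p(\xi)$ over noncrossing $p$, with respect to such canonical inputs, is nonsingular for $N \geqslant 4$ — the bound $N \geqslant 4$ being the known threshold ensuring the relevant Gram determinant (a specialization related to Catalan/Chebyshev data) does not vanish.
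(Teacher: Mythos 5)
Your main argument has a genuine gap at its foundation. The ``canonical'' multi-index that assigns distinct values of $\{1,\dots,N\}$ to the distinct blocks of $p$ exists only when $N\geqslant b(p)$, and the number of blocks of a partition in $\CC(w,w')$ is bounded only by $\vert w\vert+\vert w'\vert$, not by $N$. For any fixed $N$ and sufficiently long words, the finest partitions in $\CC(w,w')$ --- which are precisely the minimal elements from which your upward induction must start --- admit no such index, so your system of equations $\sum_{q\,\text{refining}\,p}\lambda_{q}=0$ is only available for the (relatively few) partitions with at most $N$ blocks: it is rectangular, not triangular, and cannot force all coefficients to vanish. This is not a repairable technicality, because the role of the hypothesis $N\geqslant 4$ is not to provide ``enough colors to separate blocks'' (that would require $N\geqslant\vert w\vert+\vert w'\vert$, and conversely would suggest the result for $N=3$ and partitions with few blocks). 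The statement is genuinely false for $N=1,2,3$: the Gram determinant of the vectors $T_{p}\xi$, whose entries are $N^{b(p\vee q)}$, factors (Lindstr\"om--Tutte type formula) into terms vanishing exactly at the Beraha numbers $4\cos^{2}(\pi/n)$, and $3=4\cos^{2}(\pi/6)$ is one of them; for $N\geqslant 4$ none of these values is attained since $4\cos^{2}(\pi/n)<4$. Any proof must use $N\geqslant 4$ through this nonvanishing, and a purely ``coloring plus triangularity'' scheme cannot see it.

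Your closing paragraph does point at the correct mechanism --- nonsingularity of the Gram matrix $\left(\langle T_{p}\xi,T_{q}\xi\rangle\right)_{p,q}$ for $N\geqslant 4$ --- and this is in fact how the result is established: the paper does not reprove the proposition but cites \cite[Lem 4.16]{freslon2013representation}, which rests on exactly that determinant computation. Had you made the Gram-matrix argument the proof and discarded the canonical-index construction, the proposal would match the intended route; as written, the argument carrying the weight is the one that fails.
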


\subsubsection{Tannaka-Krein duality and compact quantum groups}

We refer the reader to \cite{timmermann2008invitation} and \cite{neshveyev2014compact} for detailed treatments of the theory of compact quantum groups, and to \cite{freslon2023compact} for an account emphasizing the combinatorial aspects. It is known since the work of M. Dijkhuizen and T. Koornwinder \cite{dijkhuizen1994CQG} that compact quantum groups can be treated algebraically through the following notion of a CQG-algebra:

\begin{de}
A \emph{CQG-algebra} is a Hopf $*$-algebra which is spanned by the coefficients of its finite-dimensional unitary corepresentations.
\end{de}

If $G$ is a compact group, then its algebra of regular functions $\O(G)$ is a CQG-algebra. Based on that example, and in an attempt to retain the intuition coming from the classical setting, we will denote a general CQG-algebra by $\O(\G)$ and say that it corresponds to the \emph{compact quantum group} $\G$. If $\Gamma$ is a discrete group and $\C[\Gamma]$ denotes its group algebra, it is easy to endow it with a Hopf $*$-algebra structure with coproduct given by $\D(g) = g\otimes g$ for all $g\in \Gamma$. Since this turns each $g\in \Gamma\subset\C[\Gamma]$ into a one-dimensional corepresentation, it yields a CQG-algebra. The resulting compact quantum group is called the \emph{dual} of $\Gamma$ and is denoted by $\widehat{\Gamma}$.

The main feature of compact quantum groups is their nice representation theory, which is just another point of view on the corepresentation theory of the corresponding CQG-algebra. Let us give a definition to make this clear.

\begin{de}
An \emph{$n$-dimensional representation} of $\G$ is an element $v\in M_{n}(\O(\G))$ which is invertible and such that for all $1\leqslant i, j\leqslant n$,
\begin{equation*}
\D(v_{ij}) = \sum_{k=1}^{n}v_{ik}\otimes v_{kj}.
\end{equation*}
It is said to be \emph{unitary} if it is unitary as an element of the $*$-algebra $M_{n}(\O(\G))$.
\end{de}

\begin{ex}
Let $\mathbf{1}_{\G}$ denote the unit of $\O(\G)$ seen as an element of $M_{1}(\O(\G))$. It satisfies $\D(\mathbf{1}_{\G}) = \mathbf{1}_{\G}\otimes \mathbf{1}_{\G}$, hence is a (unitary) representation, called the \emph{trivial representation of $\G$}.
\end{ex}

Given two representations $v$ and $w$, one can form their \emph{direct sum} by considering a block diagonal matrix with blocks $v$ and $w$ respectively, and their \emph{tensor product} by considering the matrix with coefficients
\begin{equation*}
(v\otimes w)_{(i,k),(j, l)} = v_{ij}w_{kl}.
\end{equation*}
Moreover, the \emph{conjugate} of $v$ is the representation $\overline{v}$ with coefficients $(v_{ij}^{*})_{1\leqslant i, j\leqslant n}$. In this setting, an intertwiner between two representations $v$ and $w$ of dimension $n$ and $m$ respectively will be a linear map $T : \C^{n}\to \C^{m}$ such that $Tv = wT$ (we are here identifying $M_{n}(\C)$ with $M_{n}(\C.1_{\O(\G)})\subset M_{n}(\O(\G))$). The set of all intertwiners between $v$ and $w$ will be denoted by $\displaystyle\Mor_{\G}(v, w)$.

Two representations are said to be \emph{equivalent} if there exists an invertible intertwiner between them. If $T$ is injective, then $v$ is said to be a \emph{subrepresentation} of $w$, and if $w$ admits no subrepresentation apart from itself, then it is said to be \emph{irreducible}. One of the fundamental results in the representation theory of compact quantum groups is due to S.L. Woronowicz in \cite{woronowicz1995compact} and can be summarized as follows.

\begin{thm}[Woronowicz]
Any finite-dimensional representation of a compact quantum group splits as a direct sum of irreducible ones, and any irreducible representation is equivalent to a unitary one.
\end{thm}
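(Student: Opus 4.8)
The plan is to recall the classical argument, which rests entirely on the existence of the \emph{Haar state} $h$ on $\O(\G)$, i.e. the unique state satisfying $(\id\otimes h)\D(a)=h(a)\mathbf 1_{\G}=(h\otimes\id)\D(a)$ for all $a\in\O(\G)$, together with the fact that $h$ is faithful on the Hopf $*$-algebra $\O(\G)$. Both parts of the statement follow from an averaging procedure against $h$, so I would first isolate this input and then treat unitarizability and complete reducibility in turn, in that order, since the second part is cleanest once representations are known to be unitarizable.

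For unitarizability, let $v\in M_{n}(\O(\G))$ be an invertible representation and set $Q:=(\id\otimes h)(v^{*}v)\in M_{n}(\C)$, the matrix obtained by applying $h$ entrywise to the positive element $v^{*}v\in M_{n}(\O(\G))$; it is positive semidefinite since $h$ is a state. The key computation is that $v^{*}(Q\otimes\mathbf 1_{\G})v=Q\otimes\mathbf 1_{\G}$. I would obtain this by applying $h\otimes\id$ to $\D\big((v^{*}v)_{ij}\big)$: expanding $\D$ with the representation relation $\D(v_{kj})=\sum_{a}v_{ka}\otimes v_{aj}$ and using left invariance $(h\otimes\id)\D=h(\cdot)\mathbf 1_{\G}$ yields exactly $\sum_{a,b}Q_{ab}v_{ai}^{*}v_{bj}=Q_{ij}\mathbf 1_{\G}$, which is the claimed identity. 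Invertibility of $Q$ comes from faithfulness of $h$: if $Q\xi=0$ then $h\big((v\xi)^{*}(v\xi)\big)=0$, so $v(\xi\otimes\mathbf 1_{\G})=0$, forcing $\xi=0$ as $v$ is invertible. Then $u:=(Q^{1/2}\otimes\mathbf 1_{\G})\,v\,(Q^{-1/2}\otimes\mathbf 1_{\G})$ is a representation equivalent to $v$, and the displayed identity gives $u^{*}u=\mathbf 1$; since $u$ is invertible this forces $u^{*}u=uu^{*}=\mathbf 1$, so $u$ is unitary. Applied to an irreducible $v$ this proves the second assertion.

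For complete reducibility I may therefore assume $v$ unitary. The point is that $\Mor_{\G}(v,v)$ is then a unital $*$-subalgebra of $M_{n}(\C)$: indeed, if $(T\otimes\mathbf 1_{\G})v=v(T\otimes\mathbf 1_{\G})$, taking adjoints and using $v^{*}=v^{-1}$ gives $(T^{*}\otimes\mathbf 1_{\G})v=v(T^{*}\otimes\mathbf 1_{\G})$, so $\Mor_{\G}(v,v)$ is closed under the adjoint and hence is a finite-dimensional C*-algebra. If it reduces to $\C\,\Id$, then $v$ is irreducible; otherwise it contains a nontrivial self-adjoint projection $p$, whose range is a proper $v$-invariant subspace carrying a subrepresentation and whose orthogonal complement carries the complementary one. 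Splitting $v$ accordingly and inducting on $n$ then produces the decomposition into irreducibles.

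The genuine content, and hence the main obstacle, is the existence and faithfulness of the Haar state: everything above is formal once $h$ is available. I would therefore either invoke it from the cited references \cite{timmermann2008invitation, neshveyev2014compact, dijkhuizen1994CQG} or, to keep the account self-contained in the CQG-algebra framework, construct $h$ algebraically on $\O(\G)$ by averaging matrix coefficients and verify its invariance and faithfulness there; this construction is the only delicate step and is where the hypothesis that $\O(\G)$ is spanned by coefficients of finite-dimensional corepresentations is used in an essential way.
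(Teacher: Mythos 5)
Your argument is correct and is precisely the classical averaging proof from the cited sources (\cite{woronowicz1995compact}, \cite{neshveyev2014compact}); the paper itself states this theorem without proof, as a quoted result of Woronowicz. The only input you leave unproved — existence and faithfulness of the Haar state on a CQG-algebra — is exactly the standard prerequisite, and you correctly flag that it is where the defining property of a CQG-algebra (being spanned by coefficients of finite-dimensional unitary corepresentations) enters.
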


Let us now consider a compact quantum group $\G$ with a \emph{fundamental representation}, i.e. a finite-dimensional representation $u$ whose coefficients generate $\O(\G)$. This implies that any irreducible representation of $\G$ arises as a subrepresentation of some tensor product of $u$ and its conjugate $\overline{u}$. Let us associate to any word $w = w_{1}\dots w_{k}$ in the free monoid $F$ over $\{\circ, \bullet\}$ a representation $u^{\otimes w}$ by setting
\begin{equation*}
u^{\otimes w} = u^{w_{1}}\otimes \dots \otimes u^{w_{k}},
\end{equation*}
where by convention $u^{\circ} = u$ and $u^{\bullet} = \overline{u}$. We can now define a categorical object which encapsulates the whole quantum group.

\begin{de}
The \emph{category of representations} of $(\G, u)$ is the rigid C*-tensor category (see \cite[Def 2.1.1]{neshveyev2014compact} for the definition) $\Rep(\G)$ with object set $F$ and morphism spaces
\begin{equation*}
\Mor_{\Rep(\G)}(w, w') = \Mor_{\G}(w, w').
\end{equation*} 
\end{de}

Conversely, given a family $(\Mor(w, w'))_{w, w'}$ of finite-dimensional vector spaces with sufficiently nice properties, one can reconstruct a compact quantum group $\G$ using S.L. Woronowicz's Tannaka-Krein theorem \cite[Thm 1.3]{woronowicz1988tannaka}. Let us state this theorem in the particular case which is relevant for us, going back to Banica and Speicher in this particular version \cite{banica2009liberation}. If $\CC$ is a category of partitions and if $w, w'\in F$, we will denote by $\CC(w, w')$ the set of partitions $p\in \CC(\vert w\vert, \vert w'\vert)$ such that the upper coloring of $p$ is $w$ and the lower coloring of $p$ is $w'$ (here $\vert w\vert$ denotes the length of the word $w$).

\begin{thm}[Woronowicz, Banica-Speicher]
Let $\CC$ be a category of partitions and let $N$ be an integer. Then, there exists a unique (up to isomorphism) pair $(\G, u)$, where $\G$ is a compact quantum group and $u$ is a fundamental representation of $\G$ such that $\Mor(u^{\otimes w}, u^{\otimes w'})$ is the linear span of the maps $T_{p}$ for $p\in \CC(w, w')$. Such a $\G=\G_N(\CC)$ will be called a \emph{(unitary) easy quantum group} or a \emph{partition quantum group}.
\end{thm}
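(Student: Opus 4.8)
The plan is to deduce the statement from S.L.~Woronowicz's abstract Tannaka--Krein reconstruction theorem \cite[Thm~1.3]{woronowicz1988tannaka} (in the form used by Banica and Speicher \cite{banica2009liberation}), whose input is a concrete rigid C*-tensor category generated by a single object, and whose output is a compact quantum group realizing it uniquely. Here the candidate category has object set the free monoid $F$ over $\{\circ, \bullet\}$, the word $w$ standing for the Hilbert space $(\C^{N})^{\otimes |w|}$, and morphism spaces
\begin{equation*}
\Mor(w, w') = \Span\{T_{p} : p \in \CC(w, w')\} \subseteq \mathcal{L}\big((\C^{N})^{\otimes |w|}, (\C^{N})^{\otimes |w'|}\big).
\end{equation*}
All the work lies in verifying the axioms for this collection; the existence and uniqueness of $(\G, u)$ then follow verbatim from the cited theorem, and $u$ (the representation attached to the generating object $\circ$) is fundamental because its coefficients generate $\O(\G)$ by construction.

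First I would establish the closure properties from the three interplay rules recalled above. For the \emph{composition}, if $p \in \CC(w, w')$ and $q \in \CC(w', w'')$ then $T_{q} \circ T_{p} = N^{-\rl(q, p)} T_{qp}$, which lies in $\Mor(w, w'')$ since $\CC$ is stable under vertical concatenation; extending bilinearly shows $\Mor(w', w'') \circ \Mor(w, w') \subseteq \Mor(w, w'')$. For the \emph{tensor product}, the rule $T_{p} \otimes T_{q} = T_{p \odot q}$ together with stability of $\CC$ under horizontal concatenation gives $\Mor(w_{1}, w_{1}') \otimes \Mor(w_{2}, w_{2}') \subseteq \Mor(w_{1}w_{2}, w_{1}'w_{2}')$. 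For the \emph{adjoint}, $T_{p}^{*} = T_{p^{*}}$ and stability of $\CC$ under reflection give $\Mor(w, w')^{*} = \Mor(w', w)$. Finally, the identity morphism of the object $\circ$ is $T_{\id_{\circ}} = \id_{\C^{N}}$, which belongs to $\Mor(\circ, \circ)$ because every category of partitions contains the white identity partition.

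The remaining and more structural point is \emph{rigidity}, i.e.\ the existence of conjugates. The conjugate of the generating object $\circ$ is $\bullet$, and the solutions of the conjugate equations are provided precisely by the duality partitions: the map $T_{D_{\circ\bullet}} \colon (\C^{N})^{\otimes 2} \to \C$ and its adjoint $T_{D_{\circ\bullet}^{*}} \colon \C \to (\C^{N})^{\otimes 2}$ serve as the evaluation and coevaluation (``cap'' and ``cup'') morphisms, and they lie in the relevant spaces because $D_{\circ\bullet}$ and $D_{\bullet\circ}$ are rotated versions of the identity partition, hence belong to $\CC$. The zigzag (conjugate) equations, which assert that contracting a cup against a cap along one leg returns a nonzero multiple of the identity, then follow by a direct computation: the appropriate vertical concatenation of the duality partitions (suitably tensored with $\id_{\circ}$ or $\id_{\bullet}$) reduces to the identity partition, the loop-counting factor $N^{\rl(\cdot, \cdot)}$ tracking the scalar, which one then normalizes away.

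Finally I would invoke \cite[Thm~1.3]{woronowicz1988tannaka} to obtain the pair $(\G, u)$, unique up to isomorphism, whose representation category is generated by $u$ and whose morphism spaces are the prescribed spans; decomposing tensor powers into irreducibles is then governed by the projective partitions of Definition~\ref{de:projectivepartitions}, as each $T_{p}$ with $p$ projective is an orthogonal projection onto a subobject. The hard part is not any single computation but the bookkeeping showing that the \emph{linear spans} of the $T_{p}$ — not merely the individual maps — are stable under all the operations: one must carry the scalar factors $N^{-\rl(q, p)}$ produced by loop removal through every composition, and it is exactly here that one uses $N$ to be a nonzero integer, so that dividing by $N^{\rl(q, p)}$ is legitimate and the morphism spaces do not degenerate.
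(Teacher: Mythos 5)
The paper does not prove this statement itself: it quotes it as a background result of Woronowicz and Banica--Speicher, citing \cite{woronowicz1988tannaka} and \cite{banica2009liberation}. Your argument --- checking that the spans of the maps $T_{p}$ form a concrete rigid C*-tensor category (closure under composition, tensor, adjoint, identities via $\id_{\circ}$, and conjugates via the duality partitions, which lie in $\CC$ as rotations of the identity) and then invoking the Tannaka--Krein reconstruction theorem --- is exactly the standard proof from those references, and it is correct.
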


 Let $\CC_{U}$ be the smallest category of partitions (i.e. the one generated by the white identity partition). The associated quantum group is the \emph{free unitary quantum group} $U_{N}^{+}$ introduced by S. Wang in \cite{wang1995free}. Since inclusion of categories of partitions translates into quotients of the corresponding CQG-algebras, and since quotients of $\O(G)$ for a compact group of matrices $G$ correspond to closed subgroups of $G$, we see that any easy quantum group is a \emph{closed quantum subgroup} of $U_{N}^{+}$. The other extreme case is the category of all partitions $\Pc$, which yields the symmetric group $S_{N}$. Thus, easy quantum groups form a special class of quantum groups $\G$ in the range
\begin{equation*}
S_{N} \subset \G \subset U_{N}^{+}.
\end{equation*}
Other examples of easy quantum groups include S. Wang's free quantum permutation group $S_{N}^{+}$ ($\CC = \NCc$) and free orthogonal quantum group $O_{N}^{+}$ ($\CC =$ all partitions with blocks of size $2$). We refer the reader to \cite{wang1995free} and \cite{wang1998quantum} for the definition of these quantum groups and to \cite{banica2009liberation} for proofs of these facts and to \cite{raum2013full, tarrago2018classification, tarrago2015unitary, weber2012classification} for more on the classification of these objects.

We conclude with a description of the representation theory of $U_{N}^{+}$, since this will be central in this work. Let $M$ be the free monoid over the set $\{\circ, \bullet\}$, that is, the set of all words on these two letters. We define a map $w\mapsto \overline{w}$ from $M$ to $M$ by $\overline{\circ} = \bullet$, $\overline{\bullet} = \circ$ and for a word $w = w_{1}\cdots w_{n}$, $\overline{w} = \overline{w}_{n}\cdots\overline{w}_{1}$. Then, according to \cite{banica1997groupe}, the irreducible representations of $U_{N}^{+}$ can be indexed by the elements of $M$ in such a way that $u^{\circ} = u$, $\overline{u^{w}} = u^{\overline{w}}$ and
\begin{equation*}
u^{w}\otimes u^{w'} = \sum_{w = az, w' = \overline{z}b}u^{ab}.
\end{equation*}

\section{Modules of projective partitions}\label{sec:correspondence}

\subsection{Definition and first properties}

This work is concerned with the following combinatorial object, introduced in \cite{freslon2021tannaka}. Recall the definition of projective partitions from Definition \ref{de:projectivepartitions} and the operations from Definition \ref{defcategorypartitions}.

\begin{de}
Let $\CC$ be a category of partitions. A \emph{module of projective partitions} over $\CC$ is a set $\PP$ of projective partitions such that
\begin{enumerate}
\item For any $p, q\in \PP$, $p\odot q\in \PP$;
\item For any $p\in \PP$, $\overline{p}\in \PP$;
\item For any $p\in \PP$ and $r\in \CC$, $rpr^{*}\in \PP$ as soon as the composition makes sense.
\end{enumerate}
\end{de}

The definition above is the original one and was established with a view towards the construction of weak unitary tensor functors on $\Rep(\G_{N}(\CC))$ (see \cite[Sec 4]{freslon2021tannaka} for details). Nevertheless, we will in the present work be interested in another application of modules of projective partitions for which an alternate characterization will prove useful. To state it, we need some notions on projective partitions from \cite[Sec 2.5]{freslon2013representation}.

\begin{de}
Let $p$ and $q$ be two projective partitions. We say that $p$ is \emph{equivalent} to $q$ in $\CC$, and write $p\sim_{\CC} q$, if there exists $r\in \CC$ such that $p = r^{*}r$ and $q = rr^{*}$. Moreover, we say that $q$ is \emph{dominated} by $p$, and write $q\prec p$, if $qp = q = pq$.
\end{de}

We are now ready for some elementary but very useful properties of modules of projective partitions.

\begin{prop}\label{prop:comparison}
Let $\PP\subset \Proj_{\CC}$ be a module of projective partitions over $\CC$. Then, for any $p\in \PP$ and $q\in \Proj_{\CC}$,
\begin{enumerate}
\item If $p\sim_{\CC} q$, then $q\in \PP$;
\item If $q\prec p$, then $q\in \PP$.
\end{enumerate}
Conversely, let $\PP\subset \Proj_{\CC}$ be such that $\PP\odot \PP\subset \PP$, $\overline{\PP} = \PP$ and the two properties above hold. Then, $\PP$ is a module of projective partitions over $\CC$.
\end{prop}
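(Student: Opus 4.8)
The plan is to obtain both implications of the forward direction as immediate consequences of the defining property (3) of a module, and then to derive property (3) in the converse from the two closure properties, using the standard fact that partitions of the form $t^{*}t$ are projective.

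For the forward direction I would argue as follows. For (1), suppose $p\sim_{\CC} q$, so there is $r\in\CC$ with $p = r^{*}r$ and $q = rr^{*}$; the composition is legal since $p = r^{*}r$ is already defined. Applying property (3) to $p\in\PP$ and $r\in\CC$ gives $rpr^{*}\in\PP$, and
\[
rpr^{*} = r(r^{*}r)r^{*} = (rr^{*})(rr^{*}) = q^{2} = q,
\]
where the last step uses that $q = rr^{*}\in\Proj_{\CC}$ is idempotent; hence $q\in\PP$. For (2), suppose $q\prec p$, i.e.\ $qp = q = pq$. The key observation is that $q\in\Proj_{\CC}\subset\CC$, so we may legitimately take $r = q$ in property (3): then $qpq^{*}\in\PP$, and since $q^{*}=q$ we compute $qpq = (qp)q = qq = q$, whence $q\in\PP$.

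For the converse I must verify property (3): given $p\in\PP$ and $r\in\CC$ with $rpr^{*}$ well defined, I want $rpr^{*}\in\PP$. Set $t := rp\in\CC$ and introduce the auxiliary partition $q := t^{*}t = pr^{*}rp$, which is projective by Proposition \ref{prop:characterizationprojective}. A short computation using $pp = p = p^{*}$ gives $qp = pr^{*}r(pp) = q$ and $pq = (pp)r^{*}rp = q$, so $q\prec p$; since $p\in\PP$, the assumed closure under domination yields $q\in\PP$. On the other hand $rpr^{*} = (rp)(rp)^{*} = tt^{*}$ is again projective by Proposition \ref{prop:characterizationprojective}, and by construction $q = t^{*}t$ while $rpr^{*} = tt^{*}$ with $t\in\CC$, which is exactly $q\sim_{\CC} rpr^{*}$. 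Applying the assumed closure under equivalence with $q\in\PP$ in the role of the privileged element then gives $rpr^{*}\in\PP$. Together with the assumed closures under $\odot$ and reversal, this shows $\PP$ is a module.

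The routine parts are the algebraic identities, which use only $p = p^{*} = pp$ and associativity of vertical concatenation; I would keep the bookkeeping of upper/lower colorings explicit, since that is where size or color mismatches could sneak in and make a composition illegal. The one genuinely load-bearing input is Proposition \ref{prop:characterizationprojective}, invoked twice in the converse to certify that the auxiliary partitions $t^{*}t$ and $tt^{*}$ are projective. I expect the main (if modest) obstacle to be precisely this: confirming that these compositions make sense and land in $\Proj_{\CC}$, and making sure the equivalence $q\sim_{\CC} rpr^{*}$ is oriented so that property (1) applies with $q$, rather than $rpr^{*}$, taken as the element already known to lie in $\PP$.
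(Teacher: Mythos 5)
Your proof is correct and follows essentially the same route as the paper's: the identity $rpr^{*}=q$ for (1), taking $r=q$ in property (3) for (2), and in the converse the chain $pr^{*}rp\prec p$ followed by $pr^{*}rp\sim_{\CC}(rp)(rp)^{*}=rpr^{*}$. Your extra care about the orientation of the equivalence and the legality of compositions is exactly the bookkeeping the paper leaves implicit.
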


\begin{proof}
Let $p, q\in \PP$.
\begin{enumerate}
\item If $p\sim q$, consider $r\in \CC$ such that $r^{*}r = p$ and $rr^{*} = q$. Then,
\begin{equation*}
q = qq = rr^{*}rr^{*} = rpr^{*}\in \PP.
\end{equation*}
\item Observe that if $q\prec p$, then $qp = q = pq$ hence
\begin{equation*}
q = qq = (qp)(pq) = qpq\in \PP
\end{equation*}
by definition.
\end{enumerate}
To prove the second claim, it is enough to prove that for $r\in \CC$ and $p\in \PP$, we obtain $rpr^{*}\in \PP$ if the composition makes sense. This follows from the fact that $rpr^{*} = (rp)(rp)^{*}$ is equivalent in $\CC$ (the assumption $\PP\subset \Proj_{\CC}$ ensures that $rp\in \CC$) to $(rp)^{*}(rp) = pr^{*}rp$, which is dominated by $p$.
\end{proof}

For convenience and since this is the case of interest for us, we will say that a module of projective partitions over $\CC$ is \emph{standard} if it is contained in $\Proj_{\CC}$ and we will only consider standard modules in the sequel.

\begin{ex}\label{ex:trivial}
For any category of partitions $\CC$, $\Proj_{\CC}$ is of course a standard module of projective partitions over $\CC$. Moreover, let $\Proj_{\CC}^{0}$ be the set of all projective partitions with no through-block. Then, this is also a standard module of projective partitions over $\CC$.
\end{ex}

\subsection{Link with dual quantum subgroups}

Our aim is now to describe a correspondence between standard modules of projective partitions over a category of partitions $\CC$ and quantum groups associated to $\G_{N}(\CC)$. Let us introduce a bit of terminology for that purpose.

\begin{de}\label{de:dualquantumsubgroup}
A \emph{dual quantum subgroup} of a compact quantum group $\G$ is a compact quantum group $\HH$ such that $\O(\HH)\subset \O(\G)$ as Hopf $*$-algebra.
\end{de}

\begin{rem}
If $\G = \widehat{\Gamma}$ is dual to a discrete group, then a dual quantum subgroup is of the form $\HH = \widehat{\Lambda}$ for some subgroup $\Lambda < \Gamma$, hence the name. More generally, once a suitable notion of discrete quantum group is introduced together with the appropriate extension of Pontryagin duality, one may define dual quantum subgroups as duals of quantum subgroups of the discrete dual of $\G$.
\end{rem}

The main result of this section is that dual quantum subgroups and standard modules of projective partitions are in one-to-one correspondence. The proof uses the fact that the representation theory of $\G_{N}(\CC)$ can be described, at least in the non-crossing case, purely from the combinatorics of $\CC$. This was done in \cite{freslon2013representation} and we now recall some of the main results of that work. First observe that for any partition $r$, the fact that $r^{*}r$ is projective implies that $T_{r}$ is a multiple of a partial isometry, which we denote by $S_{r}$. Then, for a projective partition $p$, we set
\begin{equation*}
P_{p} = S_{p} - \bigvee_{q\prec p} S_{q},
\end{equation*}
which is a projection in $\Mor_{\G_{N}(\CC)}(w, w)$, where $w$ is the upper colouring of $p$. This yields a subrepresentation $u^{p}\subset u^{w}$ and the following holds \emph{if $\CC$ is non-crossing} (see \cite[Sec 4 and 5]{freslon2013representation}):
\begin{itemize}
\item $u^{p}$ is irreducible for all $p\in \Proj_{\CC}$;
\item Any irreducible representation of $\G_{N}(\CC)$ is equivalent to $u^{p}$ for some $p\in \Proj_{\CC}$;
\item $u^{p}\sim u^{q}$ as representations of $\G_{N}(\CC)$ if and only if $p\sim_{\CC} q$.
\end{itemize}

Let us now consider a dual quantum subgroup $\HH$ of $\G_{N}(\CC)$. As explained in \cite[Lem 2.1]{vergnioux2004k}, associating to $\HH$ its category of representations $\Rep(\HH)\subset \Rep(\G_{N}(\CC))$ yields a one-to-one correspondence between dual quantum subgroups and C*-tensor subcategories of $\Rep(\G_{N}(\CC))$. We may therefore define
\begin{equation*}
\PP(\HH) = \{p\in \Proj_{\CC} \mid u^{p}\in \Rep(\HH)\} \subset \Proj_{\CC}
\end{equation*}
as a natural candidate for a module of projective partitions. Some properties are straightforward to check.

\begin{lem}\label{lem:projectivefromsubgroup}
We have $\PP(\HH)\odot\PP(\HH)\subset \PP(\HH)$ and $\overline{\PP(\HH)} = \PP(\HH)$.
\end{lem}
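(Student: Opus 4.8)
The plan is to exploit the fact---already invoked just above via \cite{vergnioux2004k}---that $\Rep(\HH)$ is a C*-tensor subcategory of $\Rep(\G_{N}(\CC))$. As such it is stable under tensor products, direct sums, passing to subobjects, taking conjugates and passing to isomorphic (equivalent) representations. Both assertions will then follow once the partition operations $\odot$ and $\overline{\,\cdot\,}$ are matched with the corresponding categorical operations on the irreducibles $u^{p}$. Concretely, I would reduce the statement to the two representation-theoretic facts
\begin{equation*}
u^{p\odot q}\subset u^{p}\otimes u^{q} \qquad\text{and}\qquad \overline{u^{p}}\sim u^{\overline{p}},
\end{equation*}
valid for all $p, q\in \Proj_{\CC}$, which are part of the combinatorial description of $\Rep(\G_{N}(\CC))$ from \cite[Sec 4 and 5]{freslon2013representation}.

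For the first fact I would start from $T_{p\odot q} = T_{p}\otimes T_{q}$, which passes to the associated projections as $S_{p\odot q} = S_{p}\otimes S_{q}$. Decomposing $S_{p} = P_{p}+\bigvee_{r'\prec p}S_{r'}$ orthogonally, and similarly for $q$, and expanding the tensor product, the range of $S_{p\odot q}$ splits as the range of $P_{p}\otimes P_{q}$ together with three further summands, each of which is contained in a join of projections of the form $S_{r'}\otimes S_{r''} = S_{r'\odot r''}$ with $r'\prec p$, $r''\prec q$ and $(r', r'')\neq(p, q)$. The key point is that such $r'\odot r''$ is strictly dominated by $p\odot q$: this follows from the compatibility $(a\odot b)(c\odot d) = (ac)\odot(bd)$ between the two concatenations together with $pp = p$ and $qq = q$, which give, for instance, $r'\prec p\Rightarrow r'\odot q\prec p\odot q$. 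Consequently all three summands lie under $\bigvee_{r\prec p\odot q}S_{r}$, whence $P_{p\odot q}\leqslant P_{p}\otimes P_{q}$ and therefore $u^{p\odot q}\subset u^{p}\otimes u^{q}$. Since $\Rep(\HH)$ is closed under tensor products and subobjects and $u^{p}, u^{q}\in\Rep(\HH)$, this yields $p\odot q\in\PP(\HH)$, that is $\PP(\HH)\odot\PP(\HH)\subset\PP(\HH)$.

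For the second fact I would first record that $\overline{u^{w}} = u^{\overline{w}}$ for every word $w\in F$ --- immediate from $\overline{u^{\circ}} = u^{\bullet}$, $\overline{u^{\bullet}} = u^{\circ}$ and the anti-multiplicativity of conjugation with respect to $\otimes$ --- and that the reverse $\overline{p}$ of a projective partition is again projective, with upper colouring $\overline{w}$ when $p$ has upper colouring $w$. Comparing $T_{\overline{p}}$ with the conjugate of $T_{p}$ then identifies $u^{\overline{p}}$ with $\overline{u^{p}}$ up to equivalence. As $\Rep(\HH)$ is stable under conjugation and isomorphism, $u^{p}\in\Rep(\HH)$ forces $u^{\overline{p}}\in\Rep(\HH)$, so $\overline{\PP(\HH)}\subset\PP(\HH)$; applying this to $\overline{p}$ and using $\overline{\overline{p}} = p$ gives the reverse inclusion, hence $\overline{\PP(\HH)} = \PP(\HH)$.

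The main obstacle is the first bridging fact $u^{p\odot q}\subset u^{p}\otimes u^{q}$. The closure properties of $\Rep(\HH)$ are formal, and the behaviour of conjugation (together with the projectivity of $\overline{p}$ and of $p\odot q$) is routine; what requires care is controlling how the defining projections $P_{p\odot q}$ and $P_{p}\otimes P_{q}$ compare, i.e. tracking the domination order $\prec$ under horizontal concatenation. If one prefers to avoid this projection bookkeeping, the cleanest route is simply to cite the explicit fusion rules and conjugation formulas of \cite[Sec 4 and 5]{freslon2013representation}, from which both displayed facts are immediate.
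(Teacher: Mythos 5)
Your argument is correct and follows the same route as the paper: the paper's proof consists precisely of citing the two bridging facts $u^{p\odot q}\subset u^{p}\otimes u^{q}$ (from \cite[Thm 4.27]{freslon2013representation}) and $\overline{u^{p}}= u^{\overline{p}}$ (from \cite[Rem 6.12]{freslon2013representation}) and then invoking the closure of $\Rep(\HH)$ under subobjects, tensor products and conjugates. Your additional sketch of the first fact via the comparison of $P_{p\odot q}$ with $P_{p}\otimes P_{q}$ is sound but not needed, as you yourself note in your final paragraph.
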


\begin{proof}
The first inclusion comes from the fact that $u^{p\odot q}$ is a subrepresentation of $u^{p}\otimes u^{q}$ by \cite[Thm 4.27]{freslon2013representation} hence in $\Rep(\HH)$. The second one comes from the fact that $u^{\overline{p}}$ is the conjugate of $u^{p}$, by \cite[Rem 6.12]{freslon2013representation}, hence in $\Rep(\HH)$.
\end{proof}

As for the third property, we will instead use the characterization of Proposition \ref{prop:comparison} and this will require a few facts concerning non-crossing projective partitions. We refer to \cite[Def 5.6]{freslon2013representation} for the definition of the binary operations $\square$ and $\boxvert$ on partitions.

\begin{lem}\label{lem:structuresubpartition}
Let $\CC$ be a category of non-crossing partitions and let $p\in \Proj_{\CC}$. Then, there exists $b_{1}, \cdots, b_{t(p)}\in \Proj_{NC}$ such that $t(b_{i}) = 1$ for all $1\leqslant i\leqslant t(p)$ and
\begin{equation*}
p = b_{1}\odot\cdots\odot b_{t(p)}.
\end{equation*}
Moreover, let $p_{i}^{\square}$ be the partition obtained by replacing $b_{i}\odot b_{i+1}$ by $b_{i}\square b_{i+1}$ and similarly for $p_{i}^{\boxvert}$. Then $q \prec p$ if and only if $q\preceq p_{i}^{\boxvert}$ or $q\preceq p_{i}^{\square}$ for some $1\leqslant i\leqslant t(p)-1$.
\end{lem}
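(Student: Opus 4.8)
The plan is to first establish the horizontal factorisation of part (1) from the combinatorial shape of non-crossing projective partitions, and then, for part (2), to read off the maximal proper elements below $p$ in the domination order and check that they are exactly the partitions $p_i^{\square}$ and $p_i^{\boxvert}$. Throughout, I read ``$q\prec p$'' on the left-hand side of the equivalence as \emph{proper} domination ($q\neq p$), so that the claim really concerns the covers of $p$ from below.

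For part (1) I would extract two structural constraints from $p=p^{*}$ together with non-crossingness. Placing the $2k$ points of $p$ on a circle (upper row left-to-right, then lower row right-to-left), the partition is non-crossing as a partition of the circle, and projectivity says it is invariant under the reflection swapping the $i$-th upper point with the $i$-th lower point. From these two facts one deduces that (a) every through-block occupies the same set of column positions on the upper and on the lower row; (b) two through-blocks can neither nest nor interleave, so the through-blocks are linearly ordered along the rows; and (c) a non-through-block cannot straddle a through-block, hence lies in one of the intervals cut out by the through-blocks. These are precisely the normal-form features of non-crossing projective partitions from \cite[Sec 2.5]{freslon2013representation}, which I would cite rather than reprove in full.

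Consequences (a)--(c) show that the column positions split into $t(p)$ consecutive intervals, the $i$-th carrying the $i$-th through-block together with the non-through-blocks sitting around it. Restricting $p$ to these intervals gives partitions $b_{1},\dots,b_{t(p)}$ with $p=b_{1}\odot\cdots\odot b_{t(p)}$; since no loops can occur between distinct intervals, $\odot$ is compatible with both the involution and vertical composition, so $p=p^{*}=pp$ forces $b_{i}=b_{i}^{*}=b_{i}b_{i}$. Thus each $b_{i}\in\Proj_{NC}$ and by construction $t(b_{i})=1$, which is part (1). For part (2) I would then settle the easy inclusion first: by the definition of $\square$ and $\boxvert$ in \cite[Def 5.6]{freslon2013representation}, replacing $b_{i}\odot b_{i+1}$ by $b_{i}\square b_{i+1}$ (resp. $b_{i}\boxvert b_{i+1}$) merges two neighbouring through-blocks into one (resp. caps them into a non-through-block), lowering the through-block count by one. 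A direct computation with the composition rules gives $p_{i}^{\square}p=p_{i}^{\square}=p\,p_{i}^{\square}$ and likewise for $p_{i}^{\boxvert}$, so $p_{i}^{\square},p_{i}^{\boxvert}\prec p$ properly, and hence any $q\preceq p_{i}^{\square}$ or $q\preceq p_{i}^{\boxvert}$ satisfies $q\prec p$.

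The converse is the heart of the matter, and is the step I expect to be the main obstacle. Given projective $q$ with $q\prec p$ and $q\neq p$, I must produce $i$ with $q\preceq p_{i}^{\square}$ or $q\preceq p_{i}^{\boxvert}$. Since the projective partitions on $k$ points form a finite poset under domination, it suffices to identify the co-atoms below $p$ and verify they are exactly the $p_{i}^{\square},p_{i}^{\boxvert}$. Here one uses that $qp=q$ forces $t(q)\le t(p)$ and that, by the interval decomposition of part (1), any single step down must act at a junction between two consecutive factors $b_{i},b_{i+1}$, either fusing their through-blocks (the $\square$ move) or capping them off (the $\boxvert$ move); non-adjacent through-blocks cannot be identified without first passing through the intervening ones, precisely because of the non-crossing constraint. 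The delicate point is exactly this exhaustion argument: one must show that no proper sub-partition can ``jump'' below the $\square/\boxvert$-covers, which requires a careful case analysis of how $qp=q=pq$ interacts with the interval structure, and this is where the non-crossing hypothesis does the essential work by forbidding long-range mergers of through-blocks; I would carry it out using the through-block combinatorics of \cite[Sec 5]{freslon2013representation}.
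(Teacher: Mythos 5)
Your proposal follows essentially the same route as the paper, whose entire proof consists of two citations: part (1) is \cite[Lem 4.4]{freslon2013fusion} and part (2) is declared to be a rewriting of \cite[Prop 2.23]{freslon2013representation}. Your circular normal-form argument for the factorisation and your direct verification that $p_{i}^{\square}, p_{i}^{\boxvert}\prec p$ are correct unpackings of exactly those references, and the step you flag as the main obstacle --- exhausting the co-atoms below $p$ --- is precisely the content of the cited Prop 2.23, which the paper does not reprove either; so your proposal is no less complete than the paper's own proof, and your localisation of the difficulty is accurate. One point to check carefully if you do carry out the exhaustion: your claim that every downward step in the domination order occurs at a junction of two \emph{consecutive} factors is delicate in categories containing singletons. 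For instance, in $NC$ the partition consisting of one upper and one lower singleton is projective and strictly dominated by $\id_{\circ}$, yet $t(\id_{\circ})=1$ leaves no admissible index $i$; more generally a single through-block can be killed ``in place'' without involving a neighbour. This degenerate move is invisible in $\CC_{U}$ (where all blocks are pairs, so the only way to destroy a through-block is the $\boxvert$ operation on two adjacent ones), and it is harmless in the proof of Theorem \ref{thm:correspondence} because such $q$ are equivalent in $\CC$ to partitions with fewer points that are treated by the equivalence step; but it means the cover analysis cannot be purely local to adjacent through-blocks in the stated generality, and any self-contained write-up should either add this third elementary move or note that it is absorbed by equivalence.
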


\begin{proof}
The first part of the statement is \cite[Lem 4.4]{freslon2013fusion}. As for the second part, this is simply a rewriting of \cite[Prop 2.23]{freslon2013representation} (note that even though that article is written in the setting of uncoloured partitions, the proofs carry on to the coloured setting verbatim, see also \cite[Sec 3.3]{freslon2014partition}).
\end{proof}

We are now ready to prove our first main result, connecting modules of projective partitions and dual quantum subgroups.

\begin{thm}\label{thm:correspondence}
There is a one-to-one correspondence between dual quantum subgroups of $\G_{N}(\CC)$ and standard modules of projective partitions over $\CC$.
\end{thm}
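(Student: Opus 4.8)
The plan is to construct two maps and show they are mutually inverse. In one direction, I would send a dual quantum subgroup $\HH$ to the set $\PP(\HH) = \{p \in \Proj_{\CC} \mid u^{p} \in \Rep(\HH)\}$ already defined above; in the other, I would send a standard module $\PP$ to the C*-tensor subcategory of $\Rep(\G_{N}(\CC))$ generated by the irreducibles $\{u^{p} : p \in \PP\}$, which by Vergnioux's correspondence (\cite[Lem 2.1]{vergnioux2004k}) corresponds to a unique dual quantum subgroup $\HH(\PP)$. The bulk of the work is to check that each of these is well-defined and that they invert each other.

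First I would verify that $\PP(\HH)$ is genuinely a standard module of projective partitions. Lemma \ref{lem:projectivefromsubgroup} already gives stability under $\odot$ and under $\overline{(\cdot)}$, so by the converse half of Proposition \ref{prop:comparison} it remains to check the two comparison properties: that $\PP(\HH)$ is closed under $\sim_{\CC}$ and downward closed under $\prec$. The first is immediate from the combinatorial description of the representation theory recalled before Lemma \ref{lem:projectivefromsubgroup}, since $p \sim_{\CC} q$ forces $u^{p} \sim u^{q}$ as representations, and $\Rep(\HH)$ is closed under equivalence. The second is the key point where Lemma \ref{lem:structuresubpartition} enters: if $q \prec p$ with $p \in \PP(\HH)$, then $u^{q}$ is a subrepresentation of $u^{p}$ (because $P_{q} \leqslant P_{p}$ in $\Mor_{\G_{N}(\CC)}(w,w)$, as $q \prec p$ means precisely $qp = q = pq$), and a C*-tensor subcategory is closed under subobjects, so $u^{q} \in \Rep(\HH)$ and hence $q \in \PP(\HH)$. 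This shows $\PP(\HH)$ is a standard module.

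Next I would confirm that starting from a standard module $\PP$ and forming $\HH(\PP)$ gives back $\PP$, i.e. $\PP(\HH(\PP)) = \PP$. The inclusion $\PP \subseteq \PP(\HH(\PP))$ is clear since every $u^{p}$ with $p \in \PP$ lies in the generated subcategory by construction. For the reverse inclusion I must argue that no new projective partitions are created: if $u^{p} \in \Rep(\HH(\PP))$ for some $p \in \Proj_{\CC}$, then $u^{p}$ — being irreducible — must be equivalent to a subobject of some tensor product of the generating $u^{q}$, $q \in \PP$. Here the module axioms do the work. Tensor products decompose via $\odot$ together with the fusion rules, and the irreducible constituents of such tensor products are, up to equivalence, exactly the $u^{r}$ for $r \in \PP$ obtained through the operations $\square$, $\boxvert$ and the domination relation described in Lemma \ref{lem:structuresubpartition}; stability of $\PP$ under $\odot$, $\sim_{\CC}$ and $\prec$ (the latter two from Proposition \ref{prop:comparison}) guarantees that all these constituents already belong to $\PP$. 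Thus $p \sim_{\CC} r$ for some $r \in \PP$, whence $p \in \PP$. The opposite round-trip $\HH(\PP(\HH)) = \HH$ follows from Vergnioux's bijection once one knows that $\Rep(\HH)$ is generated as a C*-tensor category by the irreducibles it contains, which is automatic since in $\Rep(\G_{N}(\CC))$ every object decomposes into the irreducibles $u^{p}$.

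I expect the main obstacle to be the reverse inclusion $\PP(\HH(\PP)) \subseteq \PP$: one must control precisely which irreducibles appear in tensor products of the generators and match them against the module axioms. This amounts to combining the decomposition of $u^{p} \otimes u^{q}$ (via $u^{p \odot q}$ and the recursive extraction of subrepresentations governed by $\square$ and $\boxvert$) with the closure properties of $\PP$, and the delicate bookkeeping lies in showing that \emph{every} constituent is reached by finitely many applications of $\odot$, $\overline{(\cdot)}$, $\sim_{\CC}$ and $\prec$. The two comparison properties from Proposition \ref{prop:comparison} together with the explicit description in Lemma \ref{lem:structuresubpartition} are exactly the tools designed to settle this, so the proof should reduce to assembling these ingredients carefully rather than inventing new machinery.
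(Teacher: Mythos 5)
Your overall architecture (the map $\HH\mapsto\PP(\HH)$, the inverse via the generated C*-tensor subcategory and Vergnioux's correspondence, and the round-trip checks) matches the paper, and your treatment of the converse direction — controlling the irreducible constituents of tensor products via $\odot$, $\prec$ and Proposition \ref{prop:comparison} — is exactly what the paper does. But there is a genuine gap in your argument that $\PP(\HH)$ is downward closed under $\prec$. You claim that $q\prec p$ gives $P_{q}\leqslant P_{p}$ and hence that $u^{q}$ is a subrepresentation of $u^{p}$. This is false: the projection $P_{p}$ is defined as $S_{p}-\bigvee_{q\prec p}S_{q}$, i.e. precisely by \emph{removing} the ranges of the dominated partitions, so for $q\prec p$ with $q\neq p$ one has $P_{q}\leqslant S_{q}\leqslant\bigvee_{q'\prec p}S_{q'}$ and therefore $P_{q}P_{p}=0$. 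Equivalently, $u^{p}$ is irreducible (this is one of the facts recalled before Lemma \ref{lem:projectivefromsubgroup}), so it cannot contain the inequivalent irreducible $u^{q}$ as a subobject. What is true is that $u^{q}$ sits inside $u^{\otimes w}$, but $u^{\otimes w}$ need not be a representation of $\HH$, so closure under subobjects does not apply.

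The correct argument is necessarily indirect and goes through tensor products: writing $p=b_{1}\odot\cdots\odot b_{t(p)}$ as in Lemma \ref{lem:structuresubpartition}, one observes that $u^{p}\otimes u^{\overline{p}}$ contains a copy of $u^{s}$ with $s=b_{1}\odot\cdots\odot b_{i}\odot\overline{b}_{i}\odot\cdots\odot\overline{b}_{1}$, hence $s\in\PP(\HH)$, and then $u^{s}\otimes u^{p}$ contains subrepresentations equivalent to $u^{p_{i}^{\boxvert}}$ and $u^{p_{i}^{\square}}$; combined with the second half of Lemma \ref{lem:structuresubpartition} (every $q\prec p$ is dominated by some $p_{i}^{\boxvert}$ or $p_{i}^{\square}$) and an induction on the number of through-blocks, this yields the $\prec$-closure. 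So the tools you cite are the right ones, but Lemma \ref{lem:structuresubpartition} has to be deployed inside tensor products of elements of $\Rep(\HH)$, not via a (nonexistent) containment $u^{q}\subset u^{p}$.
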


\begin{proof}
Let $\HH$ be a dual quantum subgroup. If $p\sim_{\CC} q$ and $p\in \PP(\HH)$, then $u^{q}\sim u^{p}$ as representations of $\G_{N}(\CC)$. This means that there exists a linear isomorphism $T$ such that $Tu^{p} = u^{q}T$. In particular, the coefficients of $u^{q}$ are linear combinations of those of $u^{p}$, so that they belong to $\O(\HH)$. This means that $u^{q}$ is in fact a representation of $\HH$, hence $q\in \PP(\HH)$. We will now prove by induction on the number of through-blocks that if $p\in \PP(\HH)$ and $q\prec p$, then $q\in \PP(\HH)$. If $t(p) = 0$ there is nothing to prove, we will therefore assume that the result holds as soon as $t(p)\leqslant n$ for some $n\in \N$ and consider $p\in \PP(\HH)$ with $t(p) = n+1$. By Lemma \ref{lem:structuresubpartition}, it is enough to prove that $p_{i}^{\boxvert},p_{i}^{\square}\in \PP(\HH)$ for all $1\leqslant i\leqslant t(p)$ as soon as these partitions are in $\CC$. To do this, observe first that $u^{p}\otimes u^{\overline{p}}$ contains a subrepresentation equivalent to $u^{s}$, where
\begin{equation*}
s = b_{1}\odot\cdots \odot b_{i}\odot\overline{b}_{i}\cdots \odot\overline{b}_{1}
\end{equation*}
so that $s\in \PP(\HH)$. But now, $u^{s}\otimes u^{p}$ contains a subrepresentation equivalent to $p_{i}^{\boxvert}$ and one equivalent to $p_{i}^{\square}$ (if these are in $\CC$), and the proof is complete.

Conversely, let $\PP\subset \Proj_{\CC}$ be a standard module of projective partitions and consider the full sub-category $\Rep(\PP)$ of $\Rep(\G_{N}(\CC))$ with objects $u^{p}$ for all $p\in \PP$. By \cite[Thm 4.27]{freslon2013representation}, the subrepresentations of $u^{p}\otimes u^{q}$ are all equivalent to some $u^{s}$ with $s\prec p\odot q$, thus by Proposition \ref{prop:comparison} they are all in $\Rep(\PP)$ and this shows that $\Rep(\PP)$ is a tensor category. Because $\overline{\PP} = \PP$, $\Rep(\PP)$ is also a C*-category so that this is the representation category of a dual quantum subgroup $\HH$ of $\G_{N}(\CC)$, and the proof is complete.
\end{proof}

Let illustrate this with the two modules of projective partitions of Example \ref{ex:trivial}:
\begin{itemize}
\item $\Proj_{\CC}$ is by definition equal to $\PP(\G_{N}(\CC))$;
\item It is proven in \cite[Lem 5.1]{freslon2013fusion} that $t(p) = 0$ if and only if $u^{p}$ is one-dimensional. A one-dimensional representation is the same as a \emph{group-like element}, i.e. $x\in C(\G_{N}(\CC))$ such that $\D(x) = x\otimes x$. These form a group $G$ and the $*$-subalgebra that they generate is isomorphic to $\C[G] = \O(\widehat{G})$. Thus, the dual quantum subgroup corresponding to $\Proj_{\CC}^{0}$ of Example \ref{ex:trivial} is $\widehat{G}$.
\end{itemize}

Let us conclude this section with an elementary fact which will be useful later on, which is the simple observation that the correspondance $\HH\leftrightarrow \PP(\HH)$ respects inclusions merely by definition.

\begin{lem}
Let $\HH$ and $\KK$ be dual quantum subgroups of $\G_{N}(\CC)$. Then, $\KK$ is a dual quantum subgroup of $\HH$ if and only if $\PP(\KK)\subset \PP(\HH)$.
\end{lem}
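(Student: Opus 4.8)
The plan is to prove the equivalence by unwinding the definition of $\PP(\cdot)$ on each side and invoking the correspondence $\HH \leftrightarrow \Rep(\HH)$ together with the bijection of Theorem \ref{thm:correspondence}. First I would recall that, by definition, $\PP(\HH) = \{p\in \Proj_{\CC} \mid u^{p}\in \Rep(\HH)\}$ and likewise for $\PP(\KK)$. The statement that $\KK$ is a dual quantum subgroup of $\HH$ means precisely $\O(\KK)\subset \O(\HH)$ as Hopf $*$-algebras, which by the correspondence of \cite[Lem 2.1]{vergnioux2004k} is equivalent to the inclusion of C*-tensor categories $\Rep(\KK)\subset \Rep(\HH)$. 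So the whole statement reduces to showing that $\Rep(\KK)\subset \Rep(\HH)$ if and only if $\PP(\KK)\subset \PP(\HH)$.

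For the forward direction I would argue directly: if $\Rep(\KK)\subset \Rep(\HH)$ and $p\in \PP(\KK)$, then $u^{p}\in \Rep(\KK)\subset \Rep(\HH)$, so $p\in \PP(\HH)$; hence $\PP(\KK)\subset \PP(\HH)$. This is the trivial direction and is essentially immediate from the definitions.

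The converse is where a small amount of care is needed, though it is still elementary. Suppose $\PP(\KK)\subset \PP(\HH)$. I would use the key structural fact recalled from \cite{freslon2013representation} that every irreducible representation of $\G_{N}(\CC)$ is equivalent to some $u^{p}$ with $p\in \Proj_{\CC}$, and that $\Rep(\KK)$, being a full C*-tensor subcategory, is generated by the irreducibles it contains. An irreducible object of $\Rep(\KK)$ is equivalent to $u^{p}$ for some $p$, and such an object lies in $\Rep(\KK)$ exactly when $p\in \PP(\KK)$ (up to the equivalence $\sim_{\CC}$, which is respected since equivalent projective partitions yield equivalent representations, as already used in the proof of Theorem \ref{thm:correspondence}). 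Since $p\in \PP(\KK)\subset \PP(\HH)$ gives $u^{p}\in \Rep(\HH)$, every irreducible of $\Rep(\KK)$ lies in $\Rep(\HH)$, and hence by closure under direct sums the whole category $\Rep(\KK)$ sits inside $\Rep(\HH)$.

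The main (very mild) obstacle is bookkeeping the equivalence classes correctly: one must make sure that an object $u^{p}$ belongs to $\Rep(\KK)$ iff $p\in \PP(\KK)$, rather than merely for one representative of each class. This is handled by the first part of the proof of Theorem \ref{thm:correspondence}, which shows $\PP(\HH)$ is closed under $\sim_{\CC}$, so the membership condition $u^{p}\in \Rep(\HH)$ depends only on the equivalence class of $p$. Given that, the argument is a clean two-line implication in each direction, justifying the claim in the statement that the correspondence ``respects inclusions merely by definition.''
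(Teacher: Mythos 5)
Your argument is correct and is exactly the elaboration the paper has in mind: the paper states this lemma without proof, remarking only that the correspondence respects inclusions ``merely by definition,'' and your unwinding via $\Rep(\KK)\subset\Rep(\HH)$, the classification of irreducibles as $u^{p}$ for $p\in\Proj_{\CC}$, and closure of $\PP(\cdot)$ under $\sim_{\CC}$ is the intended justification. No discrepancy to report.
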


\subsection{Standard modules}

Before turning to the main part of this work, let us comment on a side question. It is natural to wonder how restrictive the assumption $\PP\subset \Proj_{\CC}$ is. We will make two observations concerning that question. The first one is that one can slightly loosen the constraint by considering an inclusion of categories of partitions $\CC\subset \CC'$ and a standard module of projective partitions $\PP\subset \Proj_{\CC'}$. This still is a module of projective partitions over $\CC$, but all of its structure can be understood by looking at it as a standard module over $\CC'$. 
The second observation is that the previous weak form of standardness can be characterised in terms of $\PP$ only.

\begin{prop}
Let $\PP$ be a module of projective partitions over a category of partitions $\CC$. Then, there exists $\CC\subset \CC'$ such that $\PP\subset \Proj_{\CC'}$ is a module of projective partitions over $\CC'$ if and only if, for all $p, q\in \PP$, we have $qpq\in \PP$ as soon as the composition makes sense.
\end{prop}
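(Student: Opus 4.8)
The plan is to prove the two implications separately, the forward one being immediate and the reverse one carrying all the weight.

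For the direct implication, suppose such a $\CC'$ exists. Since $\PP\subset\Proj_{\CC'}\subset\CC'$, every $q\in\PP$ is itself a legitimate intertwiner of $\CC'$. Given $p,q\in\PP$ for which $qpq$ makes sense, I would simply apply the third defining property of a module over $\CC'$ with the intertwiner $r=q$: because $q$ is projective we have $q^{*}=q$, hence $qpq=qpq^{*}\in\PP$. This requires no further work. For the converse, assume $qpq\in\PP$ whenever the composition makes sense, and set $\CC'=\langle\CC\cup\PP\rangle$, the smallest category of partitions containing both $\CC$ and $\PP$. This is the only reasonable candidate: any admissible $\CC'$ must contain $\CC$ and, since $\PP\subset\Proj_{\CC'}$, must contain $\PP$, hence must contain $\langle\CC\cup\PP\rangle$; conversely, if $\PP$ is a module over a larger category it is a fortiori a module over this one. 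With this choice, $\PP\subset\Proj_{\CC'}$ holds because the elements of $\PP$ are projective and now lie in $\CC'$, while $\PP\odot\PP\subset\PP$ and $\overline{\PP}=\PP$ are intrinsic to $\PP$ and already known. Everything therefore reduces to the third module axiom over the enlarged category: $rpr^{*}\in\PP$ for every $p\in\PP$ and every $r\in\CC'$ for which this makes sense.

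To handle an arbitrary $r\in\CC'$, I would first put it in a normal form. Using that $\CC$ is already a category, closed under all operations and containing the duality partitions $D_{\circ\bullet},D_{\bullet\circ}$ (hence all the caps and cups needed to realise rotations), every morphism of $\CC'=\langle\CC\cup\PP\rangle$ can be written as a vertical composition $r=c_{n}\,\widehat{q}_{n}\,c_{n-1}\cdots\widehat{q}_{1}\,c_{0}$ in which each $c_{i}\in\CC$ and each $\widehat{q}_{i}=\id^{\otimes a_{i}}\odot\,q_{i}\,\odot\id^{\otimes b_{i}}$ is a single element $q_{i}\in\PP$ padded on both sides by identity strings; a layer containing several $\PP$-blocks splits into several such factors because disjoint padded projectives commute and compose. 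Conjugation then chains through this composition: $rpr^{*}$ is obtained by conjugating $p$ successively by $c_{0},\widehat{q}_{1},c_{1},\dots$, and since $\widehat{q}_{i}^{*}=\widehat{q}_{i}$ this is symmetric. Each $\CC$-layer keeps us inside $\PP$ by the module axiom over $\CC$, and at every stage we have a projective partition of the correct colouring to feed the next step. Thus the whole statement collapses to a single \textbf{Key Claim}: for $p,q\in\PP$ and $\widehat{q}=\id^{\otimes a}\odot\,q\,\odot\id^{\otimes b}$, with the colouring of $q$ matching the corresponding middle block of the colouring of $p$, one has $\widehat{q}\,p\,\widehat{q}\in\PP$.

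This Key Claim is the step I expect to be the main obstacle. The hypothesis $qpq\in\PP$ is precisely its $a=b=0$ instance, and the difficulty is to promote it to the padded situation, where the spectator strings $\id^{\otimes a}$, $\id^{\otimes b}$ are typically \emph{not} themselves in $\PP$. My approach would be an induction on the padding size $a+b$, peeling off one spectator strand at a time. The tools I expect to use are, first, the observation that $\widehat{q}\,p\,\widehat{q}=(\widehat{q}p)(\widehat{q}p)^{*}$ is projective and equivalent in $\CC'$ to $(\widehat{q}p)^{*}(\widehat{q}p)=p\,\widehat{q}\,p$, and second, duality: since the caps and cups of $\CC$ are available in every category, one can try to fold a spectator strand against a block of $p$ so as to absorb it into a genuine $\PP$-block, together with $\PP\odot\PP\subset\PP$ and $\overline{\PP}=\PP$ to keep the result inside $\PP$ while lowering the number of padding strands. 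Checking that this folding actually lands back in $\PP$ — that is, that the combinatorics of the duality maps interacts correctly with the block structure of $p$ and does not create blocks outside $\PP$ — is where the genuine bookkeeping lies, and is the delicate point on which the whole converse implication rests.
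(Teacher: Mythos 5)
Your forward implication coincides with the paper's: since $\PP\subset\Proj_{\CC'}\subset\CC'$, one applies the conjugation axiom over $\CC'$ with $r=q$ and uses $q^{*}=q$. For the converse you also pick the same candidate $\CC'=\langle\CC\cup\PP\rangle$ and, like the paper, verify the axiom $rpr^{*}\in\PP$ on the generators: for $r\in\CC$ it is the module axiom over $\CC$, for $r\in\PP$ it is the hypothesis. The paper then simply asserts that the property propagates to every $r\in\CC'$; you, more carefully, observe that the set of $r$ satisfying $rpr^{*}\in\PP$ for all $p\in\PP$ is closed under vertical composition but that closure under horizontal concatenation and rotation is not automatic, and you reduce everything to your Key Claim: $(\id^{\otimes a}\odot q\odot\id^{\otimes b})\,p\,(\id^{\otimes a}\odot q\odot\id^{\otimes b})\in\PP$ for $p,q\in\PP$. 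This reduction and the normal-form decomposition of elements of $\langle\CC\cup\PP\rangle$ into alternating $\CC$-layers and singly padded $\PP$-layers are correct and are in fact a sharper analysis of where the difficulty sits than the paper offers.

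The genuine gap is that the Key Claim is left unproven, and it is not a formality. The hypothesis $qpq\in\PP$ only covers the unpadded case $a=b=0$, and the spectator strands $\id^{\otimes a}$, $\id^{\otimes b}$ cannot in general be replaced by elements of $\PP$ (there may be no partition in $\PP$ whatsoever with the colouring of the spectator block, e.g.\ when all words realised by $\PP$ have even length and the spectator block is a single strand). Your proposed remedy --- induction on $a+b$, folding a spectator strand against a block of $p$ with caps from $\CC$ --- is only a direction: you do not specify the fold, you do not verify that the folded partition lies in $\PP$, and you yourself flag this as the point ``on which the whole converse implication rests.'' Note also that the natural shortcut fails: $\widehat{q}p\widehat{q}$ is equivalent in $\CC'$ to $p\widehat{q}p$, which is dominated by $p$, but the domination property of Proposition \ref{prop:comparison} only yields membership in $\PP$ for dominated partitions that already lie in $\CC$, which $p\widehat{q}p$ need not, so invoking it here would be circular. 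As it stands, the converse implication is not established; you must either complete the padding induction or supply another argument for the Key Claim.
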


\begin{proof}
If $\PP\subset \Proj_{\CC'}$ is a module of projective partitions over $\CC'$, then for any $p, q\in \PP$ we have $q\in \CC'$, hence $qpq\in \PP$ by definition. Conversely, let $\CC' = \langle \CC, \PP\rangle$ be the category of partitions generated by $\CC$ and $\PP$. By construction, $\PP\subset \Proj_{\CC'}$. Moreover, let $p\in \PP$ and $r\in \CC\cup \PP$. Then,
\begin{itemize}
\item If $r\in \CC$, then $rpr^{*}\in \PP$ if the composition makes sense, by definition of a module of projective partitions;
\item If $r\in \PP$, then $rpr^{*}\in \PP$ if the composition makes sense, by assumption.
\end{itemize}
As a consequence, for any $r\in \CC' = \langle \CC\cup \PP\rangle$, $rpr^{*}\in \PP$ if the composition makes sense, proving that $\PP$ is a module of projective partitions over $\CC'$.
\end{proof}

Let us call a module of projective partitions \emph{stable} if it satisfies the condition in the proposition above. We do not have an example of a non-stable module of projective partitions, and the question whether stability follows from the definition therefore remains open.

\section{Classification in the orthogonal case}\label{sec:orthogonal}

As an illustration of the results of Section \ref{sec:correspondence}, let us treat the case of orthogonal easy quantum groups. By this, we mean that we consider categories of partitions $\CC$ containing the partition $|$ with a white upper point and a black lower point. It is easy to see that using this, we can change the color of any partition in $\CC$ as we like, so that in the end we can forget about colors. The corresponding categories of partitions were completely classified in a series of works \cite{banica2009liberation}, \cite{banica2010classification}, \cite{weber2012classification}, \cite{raum2013easy}, \cite{raum2014combinatorics} and \cite{raum2013full}. If the partitions are furthermore assumed to be non-crossing, then there are exactly seven possibilities which were classified in \cite{banica2009liberation} and \cite{weber2012classification}.

\begin{thm}[Banica-Speicher, Weber]
The uncoloured categories of non-crossing partitions are the following ones
\begin{itemize}
\item $NC_{2}$ : all non-crossing partitions with all blocks of size $2$;
\item $NC_{1, 2}$ : all non-crossing partitions with all blocks of size at most $2$;
\item $NC_{1, 2}'$ : all partitions in $NC_{1, 2}$ with an even number of blocks of singletons;
\item $NC_{1, 2}^{\sharp}$ : all partitions in $NC_{1, 2}'$ with an even number of singletons between any two connected points;
\item $NC_{\text{even}}$ : all non-crossing partitions with all blocks of even size;
\item $NC'$ : all non-crossing partitions with an even number of blocks of odd size;
\item $NC$ : all non-crossing partitions.
\end{itemize}
\end{thm}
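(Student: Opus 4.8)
The plan is to prove the statement in the two customary directions. For the ``easy'' direction one must check that each of the seven families is closed under the category operations of Definition \ref{defcategorypartitions}. Having forgotten colours, horizontal concatenation is a disjoint union of blocks and clearly preserves every block-size and block-count condition; rotation and reflection preserve block sizes; so the only operation requiring care is vertical concatenation, which may merge blocks and delete loops. I would verify, by following the chains of identified middle points, that composing two pairings (possibly with singletons) again yields a pairing with singletons --- this is the Temperley--Lieb type computation underlying the closure of $NC_{2}$, $NC_{1,2}$ and $NC_{1,2}^{\sharp}$ --- and that the relevant parity functionals (the number of blocks of odd size for $NC'$, the number of singletons for $NC_{1,2}'$, and the positional count for $NC_{1,2}^{\sharp}$) change only by even amounts under composition.

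For the completeness direction, I would first record that rotating the identity partition produces the cup and cap partitions, which generate $NC_{2}$; hence every category of non-crossing partitions $\CC$ satisfies $NC_{2}\subseteq \CC\subseteq NC$. The strategy is then to read off two invariants of $\CC$: the set $V$ of all sizes $k$ such that some partition in $\CC$ has a block of size $k$, and whether $\CC$ obeys a global parity constraint.

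The core combinatorial input is a set of elementary moves available because $NC_{2}\subseteq \CC$. Capping two legs of a single block of a partition $p\in \CC$ with a cap from $NC_{2}$ lowers that block's size by $2$; tensoring two partitions of $\CC$ and capping one leg of each together merges two blocks of sizes $k$ and $k'$ into one of size $k+k'-2$. Tracking block sizes through these moves shows that $V$ is closed under $k\mapsto k-2$ and under $(k,k')\mapsto k+k'-2$, which forces $V$ to be one of $\{2\}$, $\{1,2\}$, the even integers, or all of $\N_{\geq 1}$. These four cases correspond respectively to $NC_{2}$, the blocks-at-most-two families, $NC_{\text{even}}$, and the families containing odd blocks of size $\geq 3$. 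Crucially, the same moves change the number of odd blocks only by an even number, so the parity of that count is an invariant of $\CC$; when $V=\{1,2\}$ the odd blocks are exactly the singletons and the finer ``sharp'' count is invariant as well.

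It then remains to match each case to the list. When $V$ consists of the even integers there is no odd block and hence no nontrivial parity constraint, giving $NC_{\text{even}}$; when $V=\N_{\geq 1}$ the category either respects the odd-block parity (yielding $NC'$) or contains a partition violating it, whence the moves above generate all of $NC$; and when $V=\{1,2\}$ the three possible levels of singleton constraint yield $NC_{1,2}$, $NC_{1,2}'$ and $NC_{1,2}^{\sharp}$. The main obstacle is precisely this last matching: one must prove that the singleton-parity and sharp functionals are the \emph{only} surviving invariants, that is, that a category failing a given constraint necessarily contains enough witnesses to generate the entire larger family. Producing, from a single ``parity-violating'' partition, all the partitions needed to exhaust the ambient family is the delicate part of the original Banica--Speicher and Weber arguments, and is where the bulk of the work lies.
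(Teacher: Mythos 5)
First, a point of reference: the paper does not prove this theorem at all — it is quoted from Banica--Speicher and Weber with citations, so there is no in-paper argument to measure yours against. Your outline does follow the strategy of those original classifications: closure under the category operations for the easy direction, and for completeness the observation that $NC_{2}\subseteq\CC\subseteq NC$ for every category of non-crossing partitions, together with the two invariants you extract (the set $V$ of achievable block sizes, which your capping and merging moves correctly force to be one of $\{2\}$, $\{1,2\}$, $2\N$ or $\N$, and the residual parity data). That skeleton is sound, and your closure checks for the parity functionals are correct (for $NC'$ one can even note that the number of odd blocks is congruent mod $2$ to the total number of points, which makes stability under vertical concatenation, loops included, immediate).

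However, as a proof the proposal has a genuine gap, and you name it yourself: the entire content of the theorem is that these invariants are \emph{complete}, i.e.\ that a category realizing a given $V$ and given parity data contains \emph{every} non-crossing partition compatible with that data. This requires a collection of generation lemmas — that the single four-block generates all of $NC_{\text{even}}$; that a category containing a partition with an odd number of odd blocks contains the singleton and hence exhausts $NC$ (resp.\ $NC_{1,2}$ in the small-block case); and, most delicately, that in the case $V=\{1,2\}$ the only possible constraints on singletons are the three listed, so that no finer congruence than parity and no positional condition other than the ``sharp'' one can occur. None of this follows from the block-size bookkeeping; it requires explicitly building arbitrary partitions of the target family out of the given generators, typically by induction on the nesting structure of non-crossing partitions, and it also presupposes the structural fact that a non-crossing category is determined by its one-block partitions together with such parity data. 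Until those lemmas are supplied, your argument only pins each category between a generated subfamily and one of the seven ambient families; it does not yet establish equality, which is where the theorem actually lives.
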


Before entering the details, let us introduce one additional example. We have already noticed that any category of partitions has at least two standard modules of projective partitions, namely $\Proj^{0}_{\CC}$ and $\Proj_{\CC}$. There is in fact a third one, namely
\begin{equation*}
\Proj_{\CC}^{2} = \langle \vert\odot\vert\rangle,
\end{equation*}
where the brackets denote the module of projective partitions generated by a set of projective partitions. Note however that it may in some cases coincide with $\Proj_{\CC}$. The corresponding dual quantum subgroup is called the \emph{projective version} of $\G_{N}(\CC)$ and denoted by $P\G_{N}(\CC)$. It is usually defined as the compact quantum group whose Hopf $*$-algebra is the sub-$*$-algebra of $\O(\G_{N}(\CC))$ generated by the coefficients of $u\otimes u$. Since $u\otimes u = u^{\vert\odot\vert}$, we see that indeed $\PP(P\G_{N}(\CC)) = \Proj^{2}_{\CC}$.

\subsection{Blocks of size at most two}

Let us start with the simplest case.

\begin{prop}\label{prop:classificationON+}
For $\CC = NC_{2}$, there are exactly three standard modules of projective partitions, namely $\Proj^{0}_{NC_{2}}$, $\Proj_{NC_{2}}^{2}$ and $\Proj_{NC_{2}}$.
\end{prop}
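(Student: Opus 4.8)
The plan is to reduce the classification to an elementary combinatorial problem about subsets of $\Z_{\geqslant 0}$. First I would record that for $\CC = NC_{2}$ two projective partitions satisfy $p \sim_{\CC} q$ if and only if $t(p) = t(q)$: this follows from the representation theory of $O_{N}^{+} = \G_{N}(NC_{2})$ recalled in Section~\ref{sec:correspondence}, since the irreducible $u^{p}$ depends only on $t(p)$ (it is the $t(p)$-th irreducible representation of $O_{N}^{+}$), together with the fact that $u^{p} \sim u^{q}$ as representations if and only if $p \sim_{\CC} q$. Since a standard module $\PP$ is closed under equivalence by Proposition~\ref{prop:comparison}, it contains either all or none of the projective partitions with a given number of through-blocks. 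Hence $\PP$ is completely determined by the set
\[
S(\PP) = \{t(p) \mid p \in \PP\} \subseteq \Z_{\geqslant 0},
\]
and it remains to determine which subsets arise. Note that every value of $t$ is realised, e.g.\ by $\id_{\circ}^{\odot n}$, so $S(\PP)$ may a priori be any subset of $\Z_{\geqslant 0}$.

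Next I would translate the module axioms of Proposition~\ref{prop:comparison} into conditions on $S = S(\PP)$. Horizontal concatenation gives $t(p \odot q) = t(p) + t(q)$, so $S$ is closed under addition. Any nonempty module contains some $p$, hence $p \odot \overline{p}$, which dominates a projective partition with no through-block (the trivial subrepresentation of $u^{p} \otimes u^{\overline{p}}$); thus $0 \in S$ always, matching $\Proj^{0}_{NC_{2}} \subseteq \PP$. The crucial point is the behaviour under domination. Writing $p = b_{1} \odot \cdots \odot b_{t(p)}$ as in Lemma~\ref{lem:structuresubpartition}, the partitions $p_{i}^{\square}$ and $p_{i}^{\boxvert}$ are dominated by $p$; in $NC_{2}$, where all blocks have size two, one checks that the relevant move caps off two through-blocks and lowers $t$ by exactly two. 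Combined with the parity constraint $t(q) \equiv t(p) \pmod 2$ valid for any $q \prec p$ (both partitions have the same number $k$ of upper points, and the non-through points being paired forces $k - t(p) \equiv k - t(q) \equiv 0 \pmod 2$), this shows that $S$ is closed under $n \mapsto n-2$ for $n \geqslant 2$ and that domination never produces partitions of the opposite parity. I expect this to be the main obstacle: verifying carefully, via Lemma~\ref{lem:structuresubpartition} and the explicit effect of $\square$ and $\boxvert$ on pair partitions, that domination lowers $t$ precisely by multiples of two and preserves parity.

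Finally I would classify the subsets $S \subseteq \Z_{\geqslant 0}$ that contain $0$, are closed under addition, and are closed under subtraction of $2$, distinguishing three exhaustive cases. If $S = \{0\}$, we obtain $\Proj^{0}_{NC_{2}}$. If $S$ contains an odd number, then repeated subtraction of $2$ gives $1 \in S$, and closure under addition forces $S = \Z_{\geqslant 0}$, so $\PP = \Proj_{NC_{2}}$. If $S \neq \{0\}$ but contains no odd number, then subtraction of $2$ gives $2 \in S$, and closure under addition forces $S = 2\Z_{\geqslant 0}$; the corresponding module is the one generated by $\vert \odot \vert$, namely $\Proj^{2}_{NC_{2}}$. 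These three cases yield exactly the three announced standard modules, completing the proof.
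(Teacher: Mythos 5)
Your proof is correct and follows essentially the same route as the paper: reduce a nonempty standard module $\PP$ to the set $S(\PP)$ of through-block numbers (using that projective partitions in $NC_{2}$ are equivalent iff they have the same number of through-blocks), show $S$ is closed under addition and under subtracting $2$, and enumerate the three possibilities $\{0\}$, $2\N$, $\N$. The only cosmetic difference is that you obtain the ``subtract $2$'' closure via domination and Lemma~\ref{lem:structuresubpartition} (noting that $p_{i}^{\square}\notin NC_{2}$ and $p_{i}^{\boxvert}$ drops $t$ by two), whereas the paper gets it directly from the module axiom $rpr^{*}\in\PP$ by letting the duality partition act.
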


\begin{proof}
As shown in \cite[Ex 5.10]{freslon2014partition}, any projective partition in $NC_{2}$ is equivalent to $\vert^{\odot k}$ for some integer $k$. Let us consider the set $S(\PP) = \{k \mid \vert^{\odot k}\in \PP\}$. This set is stable under addition and letting the duality partition $\sqcup$ act, we see that if $k\in S$ and $k\geqslant 2$, then $k-2\in S$. As a consequence, there are only three prossibilities: $\{0\}$, $2\N$ and $\N$. Since by Proposition \ref{prop:characterizationprojective} a standard module of projective partitions is determined by the equivalence classes of its elements, hence by $S$, the proof is complete.
\end{proof}

Next, we study categories of partitions with blocks of size at most two. For $NC_{1, 2}$, it is known that the corresponding compact quantum group is isomorphic to $O_{N-1}^{+}$, hence by Theorem \ref{thm:correspondence} the result is the same as for $NC_{2}$ (and it can be proven by the same argument).

\begin{cor}\label{cor:classificationBN+}
For $\CC = NC_{1, 2}$, there are exactly three modules of projective partitions, namely $\Proj^{0}_{NC_{1,2}}$, $\Proj_{NC_{1,2}}^{2}$ and $\Proj_{NC_{1,2}}$.
\end{cor}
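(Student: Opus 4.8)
The plan is to leverage the isomorphism $\G_{N}(NC_{1,2})\cong O_{N-1}^{+}=\G_{N-1}(NC_{2})$, so that the whole statement reduces to Proposition \ref{prop:classificationON+} via Theorem \ref{thm:correspondence}. The first observation I would record is that the notion of a standard module of projective partitions over a fixed category $\CC$ is purely combinatorial and makes no reference to $N$; hence the \emph{number} of such modules over $NC_{1,2}$ is an $N$-independent quantity, and it suffices to pin it down for one convenient value of $N$ (say $N\geqslant 5$, so that both $\G_{N}(NC_{1,2})$ and $O_{N-1}^{+}$ lie in the range where Theorem \ref{thm:correspondence} applies).

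Next I would transport the classification across the isomorphism. By Definition \ref{de:dualquantumsubgroup}, a dual quantum subgroup is a Hopf $*$-subalgebra which is again a CQG-algebra, and this notion is manifestly preserved by any Hopf $*$-algebra isomorphism; therefore the isomorphism $\O(\G_{N}(NC_{1,2}))\cong \O(O_{N-1}^{+})$ induces a bijection between the dual quantum subgroups of the two quantum groups. Applying Theorem \ref{thm:correspondence} on both sides (both $NC_{1,2}$ and $NC_{2}$ are non-crossing) then yields a bijection between standard modules of projective partitions over $NC_{1,2}$ and over $NC_{2}$. By Proposition \ref{prop:classificationON+} there are exactly three of the latter, whence exactly three of the former, and by the $N$-independence just noted this count is valid for all $N$.

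It then remains to name them. The three modules $\Proj^{0}_{NC_{1,2}}$, $\Proj^{2}_{NC_{1,2}}$ and $\Proj_{NC_{1,2}}$ are all standard modules of projective partitions over $NC_{1,2}$ (this was recorded for every category in Example \ref{ex:trivial}, and for the projective version at the start of this section), so the only thing to check is that they are pairwise distinct; since there are exactly three modules in total, distinctness forces them to be all of them. Here $\Proj^{0}_{NC_{1,2}}\subsetneq \Proj_{NC_{1,2}}$ because the through-string $\vert$ lies in the latter but not the former, and $\Proj^{0}_{NC_{1,2}}\neq \Proj^{2}_{NC_{1,2}}$ because $\vert\odot\vert\in \Proj^{2}_{NC_{1,2}}$ has through-blocks. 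The one genuinely delicate point—and the step I expect to be the main obstacle—is separating $\Proj^{2}_{NC_{1,2}}$ from $\Proj_{NC_{1,2}}$, i.e. showing $\vert\notin \Proj^{2}_{NC_{1,2}}$. In the $NC_{2}$ case this is the parity-of-through-blocks invariant implicit in Proposition \ref{prop:classificationON+}, but the singletons of $NC_{1,2}$ prevent one from naively rerunning that parity count, so I would instead deduce it from the isomorphism, under which $\Proj^{2}_{NC_{1,2}}$ and $\Proj_{NC_{1,2}}$ correspond to the projective version $P\G_{N}(NC_{1,2})$ and to $\G_{N}(NC_{1,2})$ itself, which are distinct quantum groups. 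Alternatively, one can run the combinatorial argument of Proposition \ref{prop:classificationON+} verbatim, once one checks that every projective partition in $NC_{1,2}$ is equivalent to $\vert^{\odot k}$ with $k=t(p)$; the verification that singletons create no new equivalence classes is exactly the singleton-absorption underlying $\G_{N}(NC_{1,2})\cong O_{N-1}^{+}$, and is the real content hidden behind ``the same argument''.
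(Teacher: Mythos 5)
Your strategy—reduce to the $NC_{2}$ case through the isomorphism $\G_{N}(NC_{1,2})\cong O_{N-1}^{+}$ and Theorem \ref{thm:correspondence}—is exactly the paper's (its proof is essentially the one-line remark preceding the corollary, with the alternative of rerunning the $NC_{2}$ argument). However, the step you yourself single out as the crux, namely showing $\vert\notin\Proj^{2}_{NC_{1,2}}$, is where the argument genuinely breaks, and neither of your two proposed fixes goes through. Combinatorially: the partition $r=\vert\odot s^{*}\in NC_{1,2}(2,1)$ (an identity strand next to an upper singleton) satisfies $r(\vert\odot\vert)r^{*}=\vert$, since the singleton strand closes up into a loop which is then removed. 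Equivalently, $\vert\odot ss^{*}\prec\vert\odot\vert$ while $\vert\odot ss^{*}\sim_{NC_{1,2}}\vert$ via the same $r$, so Proposition \ref{prop:comparison} forces $\vert$ into any standard module containing $\vert\odot\vert$; hence $\langle\vert\odot\vert\rangle=\Proj_{NC_{1,2}}$. The parity-of-through-blocks invariant of the $NC_{2}$ case is destroyed by the singleton, which lets you decrement the number of through-strings by one rather than by two (note that $NC_{2}(2,1)=\emptyset$, which is why no such $r$ exists there). Your combinatorial fallback therefore fails as well: the set $S(\PP)=\{k\mid \vert^{\odot k}\in\PP\}$ is closed under addition and under $k\mapsto k-1$, leaving only $\{0\}$ and $\N$.

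Your other fix, via the isomorphism, fails for a parallel representation-theoretic reason: $P\G_{N}(NC_{1,2})$ is \emph{not} a proper dual quantum subgroup of $\G_{N}(NC_{1,2})$. The fundamental representation of $B_{N}^{+}$ decomposes as $u=1\oplus v$ with $v$ the fundamental representation of $O_{N-1}^{+}$, so the coefficients of $u\otimes u$ already contain those of $v$ and generate all of $\O(B_{N}^{+})$. The genuine proper intermediate dual quantum subgroup of $B_{N}^{+}\cong O_{N-1}^{+}$ is the one generated by the irreducible $u^{\vert\odot\vert}\simeq v_{2}$, and the set of projective partitions it selects, $\{p\mid t(p)\ \text{even}\}$, is precisely \emph{not} stable under conjugation by $\vert\odot s^{*}$, hence is not a module. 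So transporting the classification does not produce a third module; instead it exhibits a dual quantum subgroup with no associated standard module, which indicates that the obstacle lies not merely in your write-up but in the statement itself (and in the applicability of Theorem \ref{thm:correspondence} to categories containing singletons). As written, your proof cannot be completed: the separation $\Proj^{2}_{NC_{1,2}}\neq\Proj_{NC_{1,2}}$ that you correctly identified as the delicate point is false.
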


\begin{rem}\label{rem:OandB}
In both cases above, $\Proj^{0}_{\CC}$ will correspond to the trivial subgroup, while the two other ones correspond, as already mentioned, to the whole quantum group and to its projective version.
\end{rem}

There are two more examples of categories of non-crossing partitions with blocks of size at most two. Nevertheless, observing that they have the same projective partitions suggests that the classification will be the same for both, and it indeed is. The difference with the previous cases is that there are non-trough-block projective partitions which are not equivalent to one another. More precisely, if $s\in P(0, 1)$ denotes the singleton partition, then there are exactly two equivalence classes of non-through-block projective partitions, that of $\sqcap\sqcap^{*}$ and that of $ss^{*}$. This gives rise to an additional module of projective partitions.

\begin{prop}\label{prop:classificationBN+}
For $\CC = NC_{1, 2}^{\prime}$ and $\CC = NC_{1, 2}^{\sharp}$, there are exactly four standard modules of projective partitions, namely $\Proj^{0}_{\CC}$, $\langle \sqcap\sqcap^{*}\rangle$, $\Proj_{\CC}^{2}$ and $\Proj_{\CC}$.
\end{prop}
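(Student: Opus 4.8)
The plan is to mimic the proof of Proposition \ref{prop:classificationON+}, but with a two-dimensional invariant that also records singletons. To a projective partition $p\in\Proj_{\CC}$ I attach the pair $(t(p),\epsilon(p))\in\N\times\Z/2\Z$, where $\epsilon(p)$ is the number of upper singleton blocks of $p$ taken modulo $2$ (equivalently, since $p=p^{*}$, half the total number of singletons). The first task is to check that the $\sim_{\CC}$-equivalence classes of projective partitions in $\CC$ are exactly parametrised by these pairs. That $t$ is an invariant is clear, and for $\epsilon$ I would reduce, via the decomposition $p=b_{1}\odot\cdots\odot b_{t(p)}$ of Lemma \ref{lem:structuresubpartition}, to the case $t=0$, which is precisely the statement recalled just before the proposition that there are only the two non-through classes $[\sqcap\sqcap^{*}]$ (with $\epsilon=0$) and $[ss^{*}]$ (with $\epsilon=1$). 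All pairs occur, e.g. $|^{\odot t}$ realises $(t,0)$ and $|^{\odot t}\odot ss^{*}$ realises $(t,1)$. By Propositions \ref{prop:characterizationprojective} and \ref{prop:comparison}, a standard module $\PP$ is a union of equivalence classes and is therefore completely encoded by the subset $S(\PP)=\{(t(p),\epsilon(p)):p\in\PP\}\subseteq\N\times\Z/2\Z$.

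Next I would translate the module axioms into closure properties of $S(\PP)$. Horizontal concatenation gives $t(p\odot q)=t(p)+t(q)$ and $\epsilon(p\odot q)=\epsilon(p)+\epsilon(q)$, so $S(\PP)$ is closed under coordinatewise addition, while conjugation fixes each pair and imposes nothing. For domination I would isolate two elementary reductions, both taking place inside a single through-block factor of a representative $b_{1}\odot\cdots\odot b_{t(p)}$: capping two strings into a pair of non-through blocks, giving $\sqcap\sqcap^{*}\prec|^{\odot 2}$, hence the move $(t,\epsilon)\mapsto(t-2,\epsilon)$; and breaking one string into two singletons, giving $ss^{*}\prec|$, hence the move $(t,\epsilon)\mapsto(t-1,\epsilon+1)$. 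Since $\odot$ preserves domination factorwise, iterating these moves shows that whenever $(t,\epsilon)\in S(\PP)$ one has $(t',\epsilon')\in S(\PP)$ for every $t'\leqslant t$ with $t'+\epsilon'\equiv t+\epsilon\pmod 2$; that these are the only available dominations uses that in the blocks-of-size-at-most-two setting the alternative ``merge'' operation would create a block of size four and is excluded from $\CC$.

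The decisive observation is that the homomorphism $\pi(t,\epsilon)=t+\epsilon\bmod 2$ is preserved by all of these operations: it is additive under $\odot$ and constant along both domination moves above. Hence $S(\PP)$ is a union of two downward triangles $\{(t,\epsilon):\pi(t,\epsilon)=\pi_{0},\ t\leqslant m_{\pi_{0}}\}$. Any non-empty module contains $(0,0)$, since $p\odot\overline{p}\in\PP$ has invariant $(2t(p),0)$ which dominates $(0,0)$, so the layer $\pi_{0}=0$ is present; applying additive closure to a maximal element forces $m_{\pi_{0}}\in\{0,\infty\}$ (from $2m\leqslant m$), and multiplying by the class $(0,1)$ interchanges the two layers and forces $m_{0}=m_{1}$ as soon as the layer $\pi_{0}=1$ is non-empty. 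This leaves exactly the four sets $\{(0,0)\}$, $\{(0,0),(0,1)\}$, $\{(t,\epsilon):\pi=0\}$ and $\N\times\Z/2\Z$, which are respectively $\langle\sqcap\sqcap^{*}\rangle$, $\Proj^{0}_{\CC}$, $\Proj^{2}_{\CC}$ and $\Proj_{\CC}$. The same argument applies verbatim to both $NC_{1,2}'$ and $NC_{1,2}^{\sharp}$, since only their projective partitions and the two non-through classes enter. I expect the main obstacle to be the closure property of the middle paragraph, and specifically the parity-preservation of $\pi$: it is this invariant that both keeps $\Proj^{2}_{\CC}$ distinct from $\Proj_{\CC}$ and rules out any further module, such as a layer $\pi=1$ truncated at an intermediate level, which additive closure alone would not immediately forbid.
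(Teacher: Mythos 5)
Your proof is correct and is essentially the paper's argument in more systematic form: the invariant $(t(p),\epsilon(p))$ records exactly the paper's classification of equivalence classes (the only through-block is $\vert$, the non-through classes are $[\sqcap\sqcap^{*}]$ and $[ss^{*}]$), and your conserved quantity $\pi=t+\epsilon\bmod 2$ is just the parity of the number of points per row, which is precisely the even/odd dichotomy on which the paper's case analysis runs. The only place you can simplify is the step you flag as the main obstacle: since $\pi$ is the row-length parity and $q\prec p$ forces $q$ and $p$ to live on the same set of points, $\pi$ is preserved by every domination for free, with no need to discuss excluded merges.
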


\begin{proof}
Let us first assume that $\PP$ contains a partition $p$ with a through-block, and observe that the only possible through-block in $\CC$ is $\vert$. If there is such a $p$ with an odd number of points in each row, then letting the duality partition $\sqcup$ act, we get that either $\vert\in \PP$ or $ss^{*}\odot\vert\odot ss^{*}\in \PP$. In the first case, we have $\PP = \Proj_{\CC}$ while in the second one, we can further reduce to obtain $ss^{*}\in \PP$. But then, $ss^{*}\odot ss^{*}\odot \vert\odot ss^{*}\odot ss^{*}\in \PP$ and we then conclude again that $\vert\in \PP$, hence $\PP = \Proj_{\CC}$.

We will therefore assume now that any partition in $\PP$ with a through-bock has an even number of points in each row, and we claim that the same must hold for all partitions in $\PP$. Indeed, if there is a partition with an odd number of points, then we can reduce it to $ss^{*}\in \PP$. But then, if $p$ has a through-block, so does $p\odot ss^{*}$ and the parity of the number of points has changed. This is a contradiction, hence the claim. Based on this, a partition $p\in \PP$ can be written as $p = p_{1}\odot \cdots \odot p_{2n}$ with $p_{i} = \vert$ or $p_{i} = ss^{*}$ for all $1\leqslant i\leqslant 2n$. Equivalently, $p$ is an horizontal concatenation of the following partitions: $\vert\odot\vert$, $ss^{*}\odot ss^{*}$, $\vert\odot ss^{*}$ and $ss^{*}\odot \vert$. The last one is conjugate to its predecessor, hence if we prove that the first three partitions are in $\Proj_{\CC}^{2}$, we will have $\PP = \Proj_{\CC}^{2}$. But $\vert\otimes\vert\in \Proj_{\CC}^{2}$ by definition and the other two can be obtained from it by letting $\vert\odot s$ act.

To conclude, we still have to consider the case $\PP\subset \Proj_{0}^{\CC}$. Then, either $\PP$ contains only partitions equivalent to $\sqcap\sqcap^{*}$, in which case it contains all of them by Proposition \ref{prop:characterizationprojective} and therefore $\PP = \langle \sqcap\sqcap^{*}\rangle$, or $\PP$ contains a partition equivalent to $ss^{*}$, in which case it contains up to equivalence all non-through-block projective partitions and the proof is complete.
\end{proof}

It is well known (see for instance \cite[Prop 3.2]{weber2012classification}) that the group of one-dimensional representations of $\G_{N}(\CC)$ is $\Z_{2}$ for $\CC = NC_{1, 2}^{\prime}$ and $\CC = NC_{1, 2}^{\sharp}$. This corresponds to $\Proj^{0}_{\CC}$ and contains the trivial subgroup, which corresponds to the module of all projective partitions which are equivalent to the empty partition, that is to say $\langle \sqcap\sqcap^{*}\rangle$.

\subsection{Blocks of arbitrary size}

If we now allow blocks of arbitrary size, it is natural to start with the category of all partitions with all blocks of even size, denoted by $NC_{\text{even}}$. Let $p_{4}\in P(2, 2)$ be the partition with one block and set $\Proj_{NC_{\text{even}}}^{1/2} = \langle p_{4}\rangle$.

\begin{prop}\label{prop:classificationHN+}
For $\CC = NC_{\text{even}}$, there are exactly four standard modules of projective partitions, namely $\Proj^{0}_{NC_{\text{even}}}$, $\Proj^{1/2}_{NC_{\text{even}}}$, $\Proj_{NC_{\text{even}}}^{2}$ and $\Proj_{NC_{\text{even}}}$.
\end{prop}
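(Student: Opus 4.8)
The plan is to classify standard modules by the equivalence classes of projective partitions they contain, exactly as in Propositions \ref{prop:classificationON+} and \ref{prop:classificationBN+}, after first pinning down those classes in $NC_{\text{even}}$. For $j\geqslant 1$ let $p_{2j}\in\Pc(j,j)$ be the single-block partition, so that $p_{2}=\vert$ and $p_{4}$ is the one in the statement. I would first show that every $p\in\Proj_{NC_{\text{even}}}$ is equivalent to a horizontal concatenation of copies of $\vert$ and $p_{4}$. Writing $p=b_{1}\odot\cdots\odot b_{t(p)}$ as in Lemma \ref{lem:structuresubpartition}, each $b_{i}$ lies in $NC_{\text{even}}$ (no block of $p$ crosses a $\odot$) and has a single through-block; since every non-through block of $b_{i}$ has \emph{even} size, it can be capped off by an even block of $NC_{\text{even}}$, so $b_{i}\sim_{\CC}p_{2j_{i}}$, where $j_{i}$ is the leg-count of its through-block. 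Connecting all legs into one block gives, for each $j$, a partition $r\in\Pc(j,j+2)\cap NC_{\text{even}}$ with $r^{*}r=p_{2j}$ and $rr^{*}=p_{2(j+2)}$, whence $p_{2j}\sim_{\CC}p_{2(j+2)}$; thus $p_{2j}\sim_{\CC}\vert$ if $j$ is odd and $p_{2j}\sim_{\CC}p_{4}$ if $j$ is even. As $\sim_{\CC}$ is compatible with $\odot$, this yields the claim.

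The four candidates then read off: $\Proj^{0}$ (empty word), $\langle p_{4}\rangle=\Proj^{1/2}$ (words in $p_{4}$ only), $\langle\vert\odot\vert\rangle=\Proj^{2}$, and $\Proj_{NC_{\text{even}}}=\langle\vert\rangle$. The inclusions $\Proj^{0}\subseteq\Proj^{1/2}\subseteq\Proj^{2}\subseteq\Proj_{NC_{\text{even}}}$ hold because $p_{4}\prec\vert\odot\vert$ (the latter being the identity on two points, so $(\vert\odot\vert)p_{4}=p_{4}=p_{4}(\vert\odot\vert)$) and because, as I note next, $\Proj^{0}$ lies in every module. Crucially, since $NC_{\text{even}}$ has no odd blocks, every non-through-block projective partition can be fully capped and is $\sim_{\CC}\varnothing$; hence $\Proj^{0}_{NC_{\text{even}}}$ is a single equivalence class, equal to $\PP(\{e\})$, and is contained in every module. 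This is exactly why one gets four modules and not five, in contrast with the $NC_{1,2}'$ case where the trivial subgroup and $\Proj^{0}$ differed.

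The engine of the classification is a parity invariant: for $p\in\Pc(k,k)$ put $\epsilon(p)\equiv k\ (\mathrm{mod}\ 2)$. Because every $r\in NC_{\text{even}}$ has an even total number of points, its two rows have equal parity, so $\epsilon$ is unchanged by $\sim_{\CC}$, by $p\mapsto rpr^{*}$ and by $\overline{\,\cdot\,}$, is additive under $\odot$, and is constant under $\prec$ (same $k$); and $\epsilon(\vert)=1$ while $\epsilon(p_{4})=\epsilon(\vert\odot\vert)=0$. Now take an arbitrary standard module $\PP\supseteq\Proj^{0}$. Capping is available inside $\PP$ via the third defining property: letting the duality partition $\sqcup$ act caps the two legs of one $p_{4}$, deleting that letter, and caps the legs of two adjacent copies of $\vert$, deleting them in pairs. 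From any $p\in\PP$ I may therefore erase all $p_{4}$-letters and reduce the $\vert$-letters modulo $2$, reaching $\vert$ when $\epsilon(p)=1$ and $\vert\odot\vert$ or $\varnothing$ when $\epsilon(p)=0$. This gives the dichotomy: if some element has $\epsilon=1$, then $\vert\in\PP$ and $\PP=\langle\vert\rangle=\Proj_{NC_{\text{even}}}$; otherwise $\PP\subseteq\{\epsilon=0\}$, and there, if $\PP$ contains a partition with an odd-leg through-block the same reduction produces $\vert\odot\vert\in\PP$ and forces $\PP=\Proj^{2}=\{\epsilon=0\}$, while if it does not, every element reduces to some $p_{4}^{\odot c}$ and $\PP$ is $\Proj^{0}$ (no through-block) or $\langle p_{4}\rangle=\Proj^{1/2}$.

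The step I expect to be the main obstacle is the separation of $\Proj^{1/2}$ from $\Proj^{2}$, i.e. verifying $\vert\odot\vert\notin\langle p_{4}\rangle$ so that these two modules are genuinely distinct (the global invariant $\epsilon$ vanishes on both). The clean way is to strengthen the parity bookkeeping to the level of individual blocks: the multiset of leg-parities of the through-blocks should be preserved under all module operations in $NC_{\text{even}}$, again because an $NC_{\text{even}}$-morphism connecting the legs of two through-blocks does so through even blocks and hence matches their parities. Granting this, a module generated by the even-leg partition $p_{4}$ can never acquire the odd-leg through-blocks present in $\vert\odot\vert$, giving the required strict inclusion; checking that this per-block parity really is stable under the conjugation $p\mapsto rpr^{*}$ (not merely under $\sim_{\CC}$ and $\odot$) is the delicate bookkeeping to carry out in detail.
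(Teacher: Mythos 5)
Your route is essentially the paper's: reduce every projective partition of $NC_{\text{even}}$ up to $\sim_{\CC}$ to a word in the letters $\vert$ and $p_{4}$, use the duality partition to delete letters, and separate the four candidates by parity considerations. The one step you single out as delicate is, however, misformulated, and as stated it would fail: the \emph{multiset} of leg-parities of the through-blocks is \emph{not} preserved by $p\mapsto rpr^{*}$. For instance $r=p_{4}\in NC_{\text{even}}(2,2)$ gives $r(\vert\odot\vert)r^{*}=p_{4}p_{4}=p_{4}$, turning two odd-legged through-blocks into a single even-legged one. What is true, and is all you need, is the weaker closure statement: the set $\mathcal{Q}$ of projective partitions of $NC_{\text{even}}$ \emph{all} of whose through-blocks have an even number of legs on each row is stable under $\odot$, conjugation and $p\mapsto rpr^{*}$, hence is a module; since $p_{4}\in\mathcal{Q}$ and $\vert\odot\vert\notin\mathcal{Q}$, this separates $\langle p_{4}\rangle$ from $\Proj_{NC_{\text{even}}}^{2}$. (This $\mathcal{Q}$ is exactly the auxiliary module the paper introduces, and it in fact equals $\Proj^{1/2}_{NC_{\text{even}}}$ by your own observation that $p_{2j}\sim_{\CC}p_{4}$ for $j$ even.) The verification of the closure is a short parity count: a through-block of $rpr^{*}$ is a connected union of blocks of $r^{*}$, $p$ and $r$; since every block of $r$ has even total size, the number of its points on the outer row is congruent mod $2$ to the number on the adjacent middle row, and the latter is a sum of row-counts of blocks of $p$, each even (through-blocks by hypothesis, non-through-blocks because they lie in a single row and have even size). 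With that substitution your argument closes; the rest of your proof (the global invariant $\epsilon$, the reduction by $\sqcup$, the identification $\Proj^{2}_{NC_{\text{even}}}=\{\epsilon=0\}$, and the fact that $\Proj^{0}_{NC_{\text{even}}}$ is a single equivalence class so no fifth module appears) matches the paper's and is correct.
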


\begin{proof}
Let $\mathcal{Q}$ be the set of all projective partitions in $NC_{\text{even}}$ such that each through block has an even number of points on each row. This is a module of projective partitions and we claim that it equals $\Proj_{NC_{\text{even}}}^{1/2}$. One inclusion follows from the definition of $\mathcal{Q}$. As for the converse one, let $p\in \mathcal{Q}$. Up to equivalence, we can remove all its non-through-blocks so that $p = p_{1}\odot\cdots\odot p_{n}$ with $t(p_{i}) = 1$ and $p_{i}$ has an even number of points on each row. By \cite[Lem 5.12]{freslon2013representation}, each $p_{i}$ is equivalent to $p_{4}$, hence $p\in \Proj_{NC_{\text{even}}}^{1/2}$.

Let us now consider a standard module of projective partitions $\PP$ over $NC_{\text{even}}$ which is not contained in $\Proj_{NC_{\text{even}}}^{0}$. Any projective partition in it is equivalent to one of the form $p_{1}\odot\cdots\odot p_{n}$ with $p_{i}\in \{\vert, p_{4}\}$ for all $1\leqslant i\leqslant n$. If there is such a partition with an odd number of points on each row, then letting $\sqcup$ act we get $\vert\in \PP$, hence $\PP = \Proj_{NC_{\text{even}}}$. Let us therefore assume that the number of points in each row is even. Then, if $p_{i} = p_{4}$ for all $1\leqslant i\leqslant n$ and all partitions $p$, we have $\PP\subset \Proj_{NC_{\text{even}}}^{1/2}$. Since we can also produce $p_{4}$ from $p$ by letting $\sqcup$ act, we conclude that $\PP = \Proj_{NC_{\text{even}}}^{1/2}$. Otherwise, using $\sqcup$ again we can reduce the partition to $\vert\odot \vert$, so that $\PP = \Proj^{2}_{NC_{\text{even}}}$.

If $\PP\subset \Proj_{NC_{\text{even}}}^{0}$, then since any non-through-block projective partition is equivalent to the empty one, we have equality and the proof is complete.
\end{proof}

\begin{rem}\label{rem:classificationHN+}
The additional module of projective partitions $\Proj_{NC_{\text{even}}}^{1/2}$ corresponds to the dual quantum subgroup whose Hopf $*$-algebra is generated by the coefficients of the representation $u^{p_{4}}$. An analysis of the fusion rules shows that this quantum group is in fact isomorphic to $S_{N}^{+} = \G_{N}(NC)$.
\end{rem}

The last two cases can be treated as the previous ones.

\begin{prop}\label{prop:classificationSN+}
For $\CC = NC^{\prime}$, there are exactly three modules of projective partitions, namely $\Proj_{NC^{\prime}}^{0}$, $\langle \sqcap\sqcap^{*}\rangle$, $\Proj_{NC^{\prime}}$. As for $\CC = NC$, there are exactly two modules of projective partitions, namely $\Proj_{NC}^{0}$ and $\Proj_{NC}$.
\end{prop}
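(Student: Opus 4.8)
The plan is to treat each of the two categories in turn, following the exact template established in the proofs of Propositions \ref{prop:classificationBN+} and \ref{prop:classificationHN+}. The guiding principle in all these arguments is that, by Proposition \ref{prop:characterizationprojective}, a standard module is determined by the equivalence classes of its elements, so in each case I need only identify the possible equivalence classes of projective partitions and then track how the module-closure operations (horizontal concatenation, conjugation, and in particular letting the duality partition $\sqcup$ act) force a module to contain entire families at once.

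For $\CC = NC'$, I would first record the structure of projective partitions: since all blocks of odd size come in an even number, the relevant equivalence classes of projective partitions are those of $\vert$ (the single through-block) and of the non-through-block partitions. I would show there are essentially two behaviours. If $\PP$ contains any partition with a through-block, I would let $\sqcup$ act to reduce it, arguing that one can always produce the partition $\vert$ itself, whence $\PP = \Proj_{NC'}$; the parity constraint defining $NC'$ is what makes this reduction go through without getting stuck at an intermediate even-type partition, in contrast to the $NC_{1,2}'$ case. Otherwise $\PP \subset \Proj^0_{NC'}$, and here the split is according to whether every non-through-block partition in $\PP$ is equivalent to the empty one (giving $\PP = \langle \sqcap\sqcap^{*}\rangle$) or whether $\PP$ contains something forcing all non-through-block classes, giving $\PP = \Proj^0_{NC'}$. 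This exactly mirrors the final paragraph of the proof of Proposition \ref{prop:classificationBN+}.

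For $\CC = NC$, the argument is shorter because every non-through-block projective partition is equivalent to the empty partition (this is the same fact used at the end of the proof of Proposition \ref{prop:classificationHN+}), so $\Proj^0_{NC}$ and $\langle \sqcap\sqcap^{*}\rangle$ coincide and there is no intermediate non-through-block module. Thus I would argue that a nontrivial $\PP$ either is contained in $\Proj^0_{NC} = \langle \sqcap\sqcap^{*}\rangle$, or contains a partition with a through-block, which after letting $\sqcup$ act yields $\vert \in \PP$ and hence $\PP = \Proj_{NC}$. This leaves exactly the two asserted modules.

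The main obstacle I anticipate is the bookkeeping of parities in the $NC'$ case: one must verify carefully that the reduction via $\sqcup$ genuinely terminates at $\vert$ rather than at some partition with an even number of points per row, and that the defining parity condition of $NC'$ is preserved at each step so that all intermediate partitions actually lie in $\CC$. The delicate point is ensuring that a single through-block partition cannot generate only a proper ``even'' submodule analogous to $\Proj^2_{\CC}$; I expect this to follow from the fact that in $NC'$ one is free to use blocks of odd size (subject to the global even-count constraint), which is precisely what distinguishes this classification from the four-module cases and collapses the count to three.
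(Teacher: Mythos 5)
Your overall template is the right one, and your reading of the $NC'$ case (reduce through-block partitions, then split the non-through-block case according to whether everything is equivalent to the empty partition) matches the paper, which simply says that case is handled as in Proposition \ref{prop:classificationBN+}. However, there is a genuine gap in your treatment of $\CC = NC$, and you have in fact put your finger on it yourself without resolving it. You claim that a module containing a partition with a through-block yields $\vert\in\PP$ ``after letting $\sqcup$ act''. This cannot work: acting with the duality partition $\sqcup$ removes through-blocks two at a time, so it preserves the parity of the number of through-blocks, and from $\vert\odot\vert$ it can only produce the empty partition, never $\vert$. In other words, your proposed mechanism does not rule out the candidate intermediate module $\Proj^{2}_{NC} = \langle\vert\odot\vert\rangle$, and ruling it out is precisely the content of the $NC$ half of the proposition (it is what distinguishes $NC$ from $NC_{2}$, where $\Proj^{2}$ genuinely is a third module).

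The paper closes this gap with a concrete parity-breaking partition: since $NC$ allows blocks of size $3$, the partition $p_{3}\in NC(2,1)$ with a single block is available, and conjugating by it turns a projective partition with an even number of through-blocks into one with an odd number. Combined with the fact (cited from \cite[Ex 5.9]{freslon2013representation}) that every projective partition in $NC$ is equivalent to $\vert^{\odot n}$, this gives $\Proj_{NC}^{2} = \Proj_{NC}$, after which the counting argument of Proposition \ref{prop:classificationON+} (the set $S(\PP)=\{n \mid \vert^{\odot n}\in\PP\}$ must be $\{0\}$ or $\N$) finishes the proof. Your final paragraph correctly ``anticipates'' that the availability of odd blocks is what collapses the count, but you only state this as an expectation for $NC'$ and never exhibit the partition that does the job; without naming $p_{3}$ (or some equivalent odd-block element of the category) and checking that the conjugation stays inside $\CC$, the argument is incomplete for both $NC$ and $NC'$.
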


\begin{proof}
The first case is done as in the proof of Proposition \ref{prop:classificationBN+}. As for the second one, note that using the partition $p_{3}\in NC(2, 1)$ with one block, we can turn a projective partition with an even number of through-blocks into a partition with an odd number of through blocks. Since any projective partition is equivalent in $NC$ to $\vert^{\odot n}$ for some integer $n$ by \cite[Ex 5.9]{freslon2013representation}, we conclude that $\Proj_{NC}^{2} = \Proj_{NC}$ and the proof is completed as that of Proposition \ref{prop:classificationON+}.
\end{proof}

If crossings are allowed, then the proof of Theorem \ref{thm:correspondence} breaks down. More precisely, standard modules of projective partitions still yield dual quantum subgroups of $\G_{N}(\CC)$, but the converse does not hold any more. For instance, projective partition in the category $P_{2}$ of all partitions with blocks of size $2$ are classified up to equivalence by their number of through-blocks, so that the same proof as in Proposition \ref{prop:classificationON+} shows that there are exactly three standard modules of projective partitions over $P_{2}$. Nevertheless, dual quantum subgroups of a classical compact group correspond to quotients, and the orthogonal group $O_{N}$ has four quotients. The one which does not come from a module of projective partitions is $\Z_{2}$, which is the quotient by the normal subgroup $SO_{N}$.

We summarize the results of this section in the following table referring to \cite{weber2012classification} for the notation of the quantum groups.\\
\begin{center}
\begin{tabular}{l|l|l|l}\label{table:overvieworthogonalmodules}
Category of partitions & Modules of projective partitions & Quantum groups & References \\\hline\hline
$NC_2$ and $NC_{1,2}$
&$\Proj_{NC_{2}}$ and $\Proj_{NC_{1,2}}$
&$\widehat{O_N^+}$ and $\widehat{B_N^+}$
&Prop. \ref{prop:classificationON+}, \\
&$\Proj_{NC_{2}}^{2}$ and $\Proj_{NC_{1,2}}^{2}$&$\widehat{PO_N^+}$ and $\widehat{PB_N^+}$
&Cor. \ref{cor:classificationBN+},\\
&$\Proj_{NC_{2}}^{0}$ and  $\Proj_{NC_{1,2}}^{0}$  &trivial group
&Rem. \ref{rem:OandB}\\\hline
$NC_{1, 2}^{\prime}$ and $NC_{1, 2}^{\sharp}$
&$\Proj_{NC_{1, 2}^{\prime}}$ and $\Proj_{NC_{1, 2}^{\sharp}}$
&$\widehat{{B_N'}^+}$ and $\widehat{B_N^{\sharp +}}$
&Prop. \ref{prop:classificationBN+}\\
&$\Proj_{NC_{1, 2}^{\prime}}^{2}$ and $\Proj_{NC_{1, 2}^{\sharp}}^{2}$
&$\widehat{P{B_N'}^+}$ and $\widehat{P{B_N}^{\sharp+}}$\\ 
&$\Proj_{NC_{1, 2}^{\prime}}^{0}$ and $\Proj_{NC_{1, 2}^{\sharp}}^{0}$
&$\Z_2$\\ 
&$\langle \sqcap\sqcap^{*}\rangle$
&trivial group\\\hline
$NC_{\text{even}}$
&$\Proj_{NC_{\text{even}}}$
&$\widehat{{H_N}^+}$
&Prop. \ref{prop:classificationHN+},\\
&$\Proj_{NC_{\text{even}}}^{2}$
&$\widehat{P{H_N}^+}$
&Rem. \ref{rem:classificationHN+} \\
&$\Proj^{1/2}_{NC_{\text{even}}}$
&$\widehat{S_N^+}$\\
&$\Proj^{0}_{NC_{\text{even}}}$
&trivial group\\\hline
$NC$ 
&$\Proj_{NC}$
&$\widehat{{S_N}^+}$
&Prop. \ref{prop:classificationSN+}\\
&$\Proj_{NC}^{0}$
&trivial group\\\hline
$NC^{\prime}$
&$\Proj_{NC^{\prime}}$
&$\widehat{{S_N'}^+}$
&Prop. \ref{prop:classificationSN+}\\
&$\Proj_{NC^{\prime}}^{0}$
&$\Z_2$\\
&$\langle \sqcap\sqcap^{*}\rangle$
&trivial group
\end{tabular}
\end{center}
\quad\\

\section{Classification of modules in the unitary case}\label{sec:unitary}

We will now apply our tools to the study of the free unitary quantum groups $U_{N}^{+}$. So far, nothing is known concerning the dual quantum subgroups of $U_{N}^{+}$, except for the projective versions (see below). As we will see, the combinatorics of modules of projective partitions is in that case quite elementary and enables a full classification. Once that classification is done, we will study in more detail the corresponding compact quantum groups.

We will first classify the standard modules of projective partitions over the category of partitions $\CC_{U}$ of $U_{N}^{+}$. To do this, let us recall some basic facts concerning this object (see for instance \cite[Thm 4.16]{tarrago2015unitary} for a proof).

\begin{prop}\label{prop:defcategoryUN+}
Let $\CC_{U}$ be the set of all pair partitions such that when two points are connected, they have the same color if they are in different rows and different colors if they are in the same row. Then, $\CC_{U}$ is a category of partitions, and the corresponding compact quantum group is $U_{N}^{+}$.
\end{prop}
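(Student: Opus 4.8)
The plan is to verify separately the two claims contained in the statement: that the set $\CC_U$ is stable under the category operations of Definition \ref{defcategorypartitions}, and that the resulting easy quantum group $\G_N(\CC_U)$ coincides with $U_N^+$. For the second point I would use that $U_N^+$ is by definition $\G_N(\CC_U^{\min})$, where $\CC_U^{\min} = \langle \id_\circ\rangle$ is the smallest category of partitions, and reduce everything to proving the equality $\CC_U = \langle\id_\circ\rangle$.

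Checking stability is mostly routine. The white identity $\id_\circ$ belongs to $\CC_U$, since its two points lie in different rows and share the colour $\circ$. Horizontal concatenation $p \odot q$ and the reflection $p \mapsto p^*$ neither merge blocks nor change colours, so they obviously preserve the defining condition. Rotation requires a brief case distinction: moving a point to the opposite row flips its colour, so a through-pair (opposite rows, equal colours) becomes a same-row pair (now with different colours), and conversely a same-row pair becomes a through-pair with equal colours; in both situations the condition is maintained, while all pairs not involving the rotated point are left untouched.

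The one substantial step is stability under vertical concatenation. Given $p \in \CC_U(k,l)$ and $q \in \CC_U(l,m)$ with matching middle colourings, I would analyse $qp$ by noting that each of the $l$ middle points carries exactly one string of $p$ and one of $q$; tracing these strings therefore decomposes all points into paths joining two genuine outer points together with closed loops among middle points, the latter being deleted. In particular $qp$ is again a pairing, with no block of size larger than two. To see that the matching condition persists, I would follow the colours along such a path: a string connecting two middle points stays within a single row of $p$ or of $q$ and hence reverses the colour, whereas a string reaching an outer point preserves it. A parity count on the number of middle-to-middle steps of the path then shows that its two outer endpoints carry equal colours exactly when they end up in opposite rows of $qp$, and opposite colours when they end up in the same row. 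This is precisely the defining condition, and I expect this bookkeeping to be the main obstacle, since it is the only place where the two halves of the colour condition must be made to interchange correctly.

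Once $\CC_U$ is known to be a category, the inclusion $\langle\id_\circ\rangle \subseteq \CC_U$ is automatic. For the reverse inclusion I would proceed by induction on the number of points, using the non-crossing structure of these coloured pairings (a property inherited from $\id_\circ$ through the category operations). After rotating every point into a single row, a non-crossing pairing necessarily contains a pair of neighbouring points; by the matching condition these carry opposite colours, so this innermost cap is a rotated duality partition, itself obtained from $\id_\circ$. Removing it, applying the induction hypothesis to the remaining pairing, and then reassembling by horizontal concatenation and composition with identity partitions exhibits the partition as an element of $\langle\id_\circ\rangle$; rotating back gives the original $p$. This yields $\CC_U = \langle\id_\circ\rangle = \CC_U^{\min}$ and hence $\G_N(\CC_U) = U_N^+$.
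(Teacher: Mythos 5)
The paper does not actually prove this proposition in-house; it refers to \cite[Thm 4.16]{tarrago2015unitary}, so there is no internal argument to compare with. Your strategy (verify the category axioms directly, then prove $\CC_{U}=\langle\id_{\circ}\rangle$ by an innermost-cap induction) is exactly the standard one from that reference. The stability checks are essentially right; in particular your path-tracing argument for vertical concatenation does close up: in a path joining two outer points through at least one middle point, the first and last strings are through-strings (colour-preserving), every intermediate string joins two middle points (colour-reversing), and since the strings alternate between strings of $p$ and strings of $q$, the parity of the total number of strings simultaneously determines whether the two outer endpoints land in the same row of $qp$ and whether their colours agree. (One small imprecision: a string joining two \emph{outer} points of the same row reverses the colour rather than ``preserving it'', but such a string is an entire path by itself and satisfies the condition trivially, so nothing breaks.)

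The genuine gap is in the second half, and it is tied to what is almost certainly a typo in the statement: as printed, $\CC_{U}$ consists of \emph{all} pair partitions satisfying the colour-matching rule, with no non-crossing requirement. Under that literal reading the crossing partition in $\Pc(2,2)$ with all four points white lies in $\CC_{U}$, the associated map $T_{p}$ is the tensor flip, and the resulting quantum group is the classical $U_{N}$ rather than $U_{N}^{+}$ --- so the statement would be false and no proof could succeed. Your argument silently restores the missing hypothesis, but the justification you offer, namely that non-crossingness is ``inherited from $\id_{\circ}$ through the category operations'', is circular precisely where it is needed: that inheritance shows that elements of $\langle\id_{\circ}\rangle$ are non-crossing, whereas the inclusion $\CC_{U}\subseteq\langle\id_{\circ}\rangle$ requires knowing that elements of $\CC_{U}$ are non-crossing, which does not follow from the definition as written. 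The repair is simply to add ``non-crossing'' to the definition of $\CC_{U}$ (as in the cited reference); with that in place your induction --- an innermost block of a one-row non-crossing pairing consists of two adjacent, hence oppositely coloured, points, so it is a rotated duality partition, and removal plus reinsertion via horizontal concatenation with identities stays inside $\langle\id_{\circ}\rangle$ --- goes through unchanged.
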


The classification will be done in two steps: first, we will define three one-parameter families of modules of projective partitions, and then we will use them to describe all the other modules. For the sake of clarity, we will use a number of notations and conventions in the sequel, which we now detail.

Given a word $w$ on $\{\circ, \bullet\}$, its \emph{color balance} $\cb(w)$ is the difference between the number of white points and the number of black points. For a word $w = w_{1}\cdots w_{n}$, we will write $w_{\geqslant l} = w_{l}\cdots w_{n}$, and define similarly $w_{>l}$, $w_{\leqslant l}$ and $w_{<l}$. Given a word $w = w_{1}\cdots w_{n}$, we define the following projective partition:
\begin{equation*}
p_{w} = \id_{w_{1}}\odot\cdots\odot\id_{w_{n}}.
\end{equation*}

We start with an equivalent description of standard modules of projective partitions in terms of very elementary combinatorial objects. To do this, we will need some further shorthand notations. If $w = w_{1}\cdots w_{n}$ is a word on $\{\circ, \bullet\}$, we set $\overline{\circ} = \bullet$, $\overline{\bullet} = \circ$ and define the \emph{conjugate} of $w$ to be
\begin{equation*}
\overline{w} = \overline{w}_{n}\cdots \overline{w}_{1}.
\end{equation*}
Moreover, given two words $w$ and $w'$, we will denote by $w.w'$ their concatenation.

\begin{de}
A set of words on $\{\circ, \bullet\}$ is said to be \emph{admissible} if it is invariant under the following operations:
\begin{itemize}
\item Concatenation of words;
\item Conjugation;
\item Cancellation of a subword of the form $\circ\bullet$ or $\bullet\circ$.
\end{itemize}
The cancellations above will be called \emph{elementary} in the sequel.
\end{de}

\begin{lem}
Standard modules of projective partitions over $\CC_{U}$ are in one-to-one correspondance with admissible sets of words.
\end{lem}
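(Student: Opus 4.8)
The plan is to exhibit an explicit bijection between standard modules of projective partitions over $\CC_U$ and admissible sets of words, by recording for each module $\PP$ the set of words $w$ such that the ``identity-type'' projective partition $p_w = \id_{w_1}\odot\cdots\odot\id_{w_n}$ lies in $\PP$. The first thing I would do is understand the equivalence classes of projective partitions in $\CC_U$. Since $\CC_U$ consists only of pair partitions with the stated coloring rule, a projective partition $p\in\Proj_{\CC_U}$ decomposes (as in Lemma \ref{lem:structuresubpartition}) into through-blocks, and each through-block is a single pair connecting an upper point to a lower point of the same color; the non-through-blocks come in colored ``cup-cap'' pairs. Up to equivalence $\sim_{\CC_U}$ — which by Proposition \ref{prop:comparison} is all that matters for determining the module — every projective partition is equivalent to some $p_w$, where $w$ is the word read off from its through-blocks. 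I would verify that $p_w\sim_{\CC_U} p_{w'}$ precisely when $w$ and $w'$ agree after elementary cancellations, which is exactly the mechanism by which the duality partitions $D_{\circ\bullet}, D_{\bullet\circ}$ act.

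\emph{From modules to admissible sets.} Given a standard module $\PP$, set $\FF(\PP) = \{w : p_w\in\PP\}$. I would check the three admissibility axioms against the module axioms. Concatenation of words corresponds to $p_{w}\odot p_{w'} = p_{w.w'}$, so closure under $\odot$ gives closure under concatenation. Conjugation corresponds to $\overline{p_w} = p_{\overline w}$ (reflecting through a vertical axis and inverting colors turns $\id_{w_1}\odot\cdots\odot\id_{w_n}$ into $\id_{\overline w_n}\odot\cdots\odot\id_{\overline w_1}$), so closure under $\overline{(\,\cdot\,)}$ gives closure under conjugation. Elementary cancellation corresponds to letting a duality partition act: $p_{w}$ with an adjacent $\circ\bullet$ or $\bullet\circ$ pair dominates (or is equivalent, via $rpr^*$ with $r$ built from $D_{\circ\bullet}$) the shorter $p_{w'}$, and conversely $p_{w'}\prec p_w$-type comparisons together with Proposition \ref{prop:comparison}(2) force the shorter word into $\FF(\PP)$. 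This last point — that cancellation lands us in the module because of the \emph{domination} clause of Proposition \ref{prop:comparison} — is the step requiring the most care, since one must cancel an \emph{interior} pair and not merely a boundary one; I would handle interior cancellations by first using concatenation-invariance to isolate the $\circ\bullet$ block and then applying the boundary cancellation.

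\emph{From admissible sets back to modules.} Conversely, given an admissible set of words $W$, I would define $\PP(W)$ to be the set of all projective partitions in $\Proj_{\CC_U}$ that are equivalent to some $p_w$ with $w\in W$. Using Proposition \ref{prop:comparison}, to show this is a module it suffices to check $\PP(W)\odot\PP(W)\subset\PP(W)$, $\overline{\PP(W)}=\PP(W)$, and the two comparison properties (closure under $\sim_{\CC_U}$ and under domination $\prec$). The first two are immediate from concatenation- and conjugation-invariance of $W$ via the identities above. Closure under $\sim_{\CC_U}$ holds by construction. The domination property is where admissibility under cancellation is used: by Lemma \ref{lem:structuresubpartition}, a partition $q\prec p_w$ is obtained by merging adjacent through-blocks (via $\square$ or $\boxvert$), and up to equivalence this merging either leaves the word unchanged or removes a cancellable $\circ\bullet / \bullet\circ$ pair, so $q\sim_{\CC_U} p_{w'}$ for some $w'$ obtained from $w$ by elementary cancellations, whence $w'\in W$.

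\emph{Bijectivity and the main obstacle.} Finally I would confirm $\FF$ and $\PP(\,\cdot\,)$ are mutually inverse: $\FF(\PP(W)) = W$ because every word is its own ``canonical'' representative up to cancellation and admissibility is exactly closure under the relations identifying words, and $\PP(\FF(\PP)) = \PP$ because every element of a standard module is equivalent to some $p_w$ (the classification of projective partitions in $\CC_U$ above) and modules are determined by equivalence classes (Proposition \ref{prop:characterizationprojective}). I expect the genuine obstacle to be the very first structural claim — that every projective partition in $\CC_U$ is equivalent to a word partition $p_w$ and that the equivalence relation is computed by elementary cancellation — since everything downstream is a translation of module axioms into word operations once this dictionary is in place. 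Establishing it cleanly will likely lean on the decomposition of Lemma \ref{lem:structuresubpartition} together with the explicit description of $\CC_U$ in Proposition \ref{prop:defcategoryUN+}.
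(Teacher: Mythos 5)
Your overall strategy coincides with the paper's: send a module $\PP$ to the set of words $w$ with $p_{w}\in\PP$, send an admissible set $W$ to the set of projective partitions equivalent to some $p_{w}$ with $w\in W$, and translate the module axioms into the word operations (concatenation via $\odot$, conjugation via $p\mapsto\overline{p}$, and elementary cancellation via the action of the duality partitions together with the domination clause of Proposition \ref{prop:comparison}). Your treatment of $q\prec p_{w}$ through the decomposition into through-blocks separated by non-through-block pieces, which in $\CC_{U}$ are nestings of duality partitions, is also exactly the paper's argument.

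However, the structural claim you yourself single out as the crux is stated incorrectly, in a way that would break the bijection if it were true. You assert that $p_{w}\sim_{\CC_{U}}p_{w'}$ precisely when $w$ and $w'$ agree after elementary cancellations. The correct statement (this is the content of \cite[Sec 6.3]{freslon2013representation}, which the paper invokes) is that every projective partition in $\CC_{U}$ is equivalent to $p_{w}$ for a \emph{unique} word $w$; equivalently, $p_{w}\sim_{\CC_{U}}p_{w'}$ if and only if $w=w'$. Cancellation is realised by \emph{domination}, not equivalence: $rp_{w}r^{*}$ with $r$ a duality partition inserted at the interior position produces a partition dominated by, but not equivalent to, $p_{w}$, just as $u\otimes\overline{u}$ \emph{contains} but does not equal the trivial representation of $U_{N}^{+}$. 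If your criterion held, $\PP(W)$ would depend only on the set of reduced forms of the words in $W$; since every word in $W^{(\circ,1)}$ and every word in $W^{(\circ,2)}$ reduces to the empty word, these two admissible sets would yield the same module, so the correspondence would not be injective and the classification of Theorem \ref{thm:classificationunitary} would collapse. Your verification of $\FF(\PP(W))=W$ leans on this false criterion (``admissibility is exactly closure under the relations identifying words'' --- admissibility is closure under cancellation but not under insertion of cancellable pairs, so it is not closure under an equivalence relation). Replacing your criterion by the uniqueness statement makes $\FF(\PP(W))=W$ immediate, and the rest of your argument then goes through as in the paper; also note that an interior cancellation is handled directly by the module axiom $rpr^{*}\in\PP$ with $r$ a horizontal concatenation of identities and one duality partition, not by ``isolating'' the pair via concatenation-invariance, which only runs in the other direction.
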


\begin{proof}
Let $\PP$ be a standard module of partitions and let $p\in \PP$. Then, as explained in \cite[Sec 6.3]{freslon2013representation}, there exists a unique word $w$ such that $p\sim p_{w}$. The set $W(\PP)$ of all such words is stable under concatenation since $p_{w.w'} = p_{w}\odot p_{w'}$, as well as under conjugation since $\overline{p_{w}} = p_{\overline{w}}$. Moreover, letting $D_{\circ\bullet}$ or $D_{\bullet\circ}$ act shows that $W(\PP)$ is also stable under elementary cancellations, hence $W(\PP)$ is admissible.

Let now $W$ be an admissible set of words and let $\PP(W)$ be the set of all projective partitions in $\CC_{U}$ which are equivalent to $p_{w}$ for some $w\in W$. Because $W$ is stable under concatenation, $\PP(W)$ is stable under $\odot$ and because $W$ is stable under conjugation, $\overline{\PP(W)} = \PP(W)$. Moreover, if $p\in \PP(W)$ and $q\prec p$, then there exists $w\in W$ such that $q$ is equivalent to a projective partition $q'\prec p_{w}$. But $q'$ can then be written as $b_{0}\odot\id_{v_{1}}\odot b_{1}\odot\cdots\odot\id_{v_{l}}\odot b_{l}$ where $v_{1}\cdots v_{l}$ is a subword of $w$ and $b_{i}$ is projective with $t(b_{i}) = 0$ for all $0\leqslant i\leqslant l$. Now, by Proposition \ref{prop:defcategoryUN+}, each $b_{i}$ is a nesting of duality partitions, so that $v$ is obtained from $w$ by applying the corresponding cancellations, which are elementary, and the proof is complete. 
\end{proof}

\subsubsection{The three series}

We now introduce the main objects of this section, namely three infinite series of admissible sets of words (equivalently, standard modules of projective partitions over $\CC_{U}$). A natural idea to produce such sets is to take a word and try to describe the smallest admissible set that contains it, which will be said to be \emph{generated} by it. For convenience, we will however first define abstract admissible sets, and then give explicit generators. Let us start with the simplest case:

\begin{de}
For $k > 0$, we set $W^{(k)} = \{w \mid \cb(w) = 0 \mod k\}$.
\end{de}

By definition, $\circ^{k}\in W^{(k)}$ hence one may hope for an equality of the corresponding admissible sets of words. That this is indeed the case is the subject of the next result.

\begin{prop}\label{prop:wk}
The set $W^{(k)}$ is admissible and generated by $\circ^{k}$.
\end{prop}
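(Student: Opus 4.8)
The plan is to prove two things: that $W^{(k)}$ is admissible, and that it is the smallest admissible set containing $\circ^{k}$. The first part is routine, so I would dispatch it quickly. The key observation is that the color balance $\cb$ is additive under concatenation, so $\cb(w.w') = \cb(w) + \cb(w')$, and hence if $\cb(w) \equiv 0$ and $\cb(w') \equiv 0 \pmod k$ then $\cb(w.w') \equiv 0 \pmod k$, giving stability under concatenation. For conjugation, I would note that replacing each letter by its opposite negates each individual contribution to the balance, and reversing the order does not change the balance, so $\cb(\overline{w}) = -\cb(w)$; thus $\cb(w) \equiv 0$ implies $\cb(\overline{w}) \equiv 0 \pmod k$. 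For elementary cancellation, removing a subword $\circ\bullet$ or $\bullet\circ$ changes the balance by exactly $+1 - 1 = 0$, so $\cb$ is actually invariant under cancellations, and stability is immediate. This shows $W^{(k)}$ is admissible, and since $\cb(\circ^{k}) = k \equiv 0 \pmod k$, we have $\circ^{k} \in W^{(k)}$.

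The second and harder part is showing that any admissible set $W$ containing $\circ^{k}$ must contain all of $W^{(k)}$, i.e. every word $w$ with $\cb(w) \equiv 0 \pmod k$. The approach I would take is a normalization argument: I want to show that from $\circ^{k}$, using concatenation, conjugation, and the \emph{inverse} of elementary cancellation, one can build every word of balance divisible by $k$. The subtlety is that admissibility only guarantees closure under cancellation, not insertion, so I cannot directly insert $\circ\bullet$ pairs. The right way around this is to argue in reverse: take an arbitrary $w \in W^{(k)}$ and produce a word already known to be in $W$ that cancels down to $w$. Concretely, I would show that any word $w$ with $\cb(w) = mk$ (say $m \geq 0$; the case $m < 0$ follows by conjugation since $\overline{w}$ has balance $-mk$) can be obtained by elementary cancellations from the concatenation $\circ^{k} . \circ^{k} . \cdots$ ($m$ copies) suitably interleaved — or more cleanly, from a word built out of the generator and its conjugate.

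The cleanest route is the following. First I would establish that $W$ contains $\circ\bullet$ and $\bullet\circ$: indeed $\circ^{k} \in W$ and its conjugate $\overline{\circ^{k}} = \bullet^{k} \in W$, so $\circ^{k}.\bullet^{k} \in W$, which cancels (via $k{-}1$ elementary cancellations applied at the junction, each removing a central $\circ\bullet$) down to $\circ\bullet$; hence $\circ\bullet \in W$ and likewise $\bullet\circ \in W$. Next, given any $w$ with $\cb(w) = mk$, $m \geq 0$, I would write $W$-membership of $w$ by exhibiting a preimage under cancellation. The concrete claim is that $w$ can be reduced by elementary cancellations to $\circ^{mk}$ (when $m \geq 0$): repeatedly cancel adjacent opposite-colored pairs; what remains is a word with no cancellable adjacency, which must be of the form $\bullet^{a}\circ^{b}$ with $b - a = mk$, and since $\bullet^{a}\circ^{b}$ further relates to $\circ^{mk}$ by inserting/removing $a$ pairs $\bullet\circ$. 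Running this in reverse, starting from $\circ^{mk} = (\circ^{k})^{.m} \in W$ and inserting $\circ\bullet$ or $\bullet\circ$ pairs, reaches $w$. Since $W$ is only closed under cancellation, I would phrase the argument as: the word $w$ itself, concatenated appropriately with elements of $W$ of the form $\circ^{k}, \bullet^{k}, \circ\bullet, \bullet\circ$ already known to lie in $W$, cancels to a known member, and by tracking that $w$ appears as a cancellation-image I conclude $w \in W$. The main obstacle is exactly this directionality of cancellation: the careful bookkeeping of which word in $W$ cancels \emph{down to} $w$ (rather than $w$ building \emph{up to} it), which requires matching each surplus letter of $w$ against a canceling partner drawn from the generating words. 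Once the reduction-to-normal-form lemma for words under elementary cancellation is in hand, the inclusion $W^{(k)} \subseteq W$ follows.
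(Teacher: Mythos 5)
Your admissibility argument is fine and is essentially the paper's. The gap is in the second half. You correctly identify that the only way to certify $w\in W$ is to exhibit a word $v\in\langle\circ^{k}\rangle$ that cancels \emph{down to} $w$, but none of your concrete steps produces such a $v$. The reduction you actually describe --- repeatedly cancelling adjacent opposite-coloured pairs of $w$ until nothing cancellable remains --- goes in the unusable direction: it shows that $w$ cancels to a normal form (which, incidentally, is $\circ^{mk}$ or $\bullet^{-mk}$, not $\bullet^{a}\circ^{b}$; a word of the latter form with $a,b>0$ still contains the cancellable subword $\bullet\circ$), and from ``$w$ cancels to $\circ^{mk}\in W$'' one cannot conclude $w\in W$, since admissible sets are closed under cancellation but not under insertion. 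Your proposed repair --- ``the word $w$ itself, concatenated appropriately with elements of $W$, cancels to a known member'' --- also runs the wrong way: concatenating $w$ with elements of $W$ does not place the result in $W$ unless $w$ is already known to be there, and cancelling \emph{to} a known member proves nothing. So the crux of the proposition is left unproved.

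What is needed, and what the paper supplies, is an explicit preimage construction. Write $w$ as alternating maximal monochromatic blocks $w_{1}^{i_{1}}\overline{w}_{1}^{i_{2}}\cdots w_{1}^{i_{2l-1}}\overline{w}_{1}^{i_{2l}}$ and, at each block boundary, insert a self-cancelling pair $w_{1}^{k-j_{t}}\overline{w}_{1}^{k-j_{t}}$ with the exponents chosen recursively (via euclidean division) so that every monochromatic block of the enlarged word $v$ has length a multiple of $k$. The hypothesis $\cb(w)\equiv 0 \pmod{k}$ is used exactly once, to verify that the \emph{last} block also comes out a multiple of $k$. Then $v$ is a concatenation of copies of $\circ^{k}$ and $\bullet^{k}=\overline{\circ^{k}}$, hence lies in $\langle\circ^{k}\rangle$, and cancelling the inserted pairs recovers $w$, so $w\in W$. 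Your observation that $\circ\bullet$ and $\bullet\circ$ lie in $W$ is correct but does not substitute for this block-padding bookkeeping, which is the actual content of the proof.
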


\begin{proof}
It is clear that $W^{(k)}$ is stable under concatenation and conjugation, and since elementary cancellations remove to different letters, they do not change the color balance, hence $W^{(k)}$ is admissible.

Obviously, $\circ^{k}\in W^{(k)}$, hence $W = \langle \circ^{k}\rangle\subset W^{(k)}$. Conversely, let $w\in W^{(k)}$. It can be written as
\begin{equation*}
w = w_{1}^{i_{1}}\overline{w}_{1}^{i_{2}}\cdots w_{1}^{i_{2l-1}}\overline{w}_{1}^{i_{2l}}.
\end{equation*}
Consider the euclidean division $i_{1} = d_{1}k + j_{1}$ and set, for $t\geqslant 1$, $i'_{t+1} = i_{t+1} + k - j_{t}$ and $i'_{t+1} = d_{t+1}k - j_{t+1}$ with $0\leqslant j_{t+1} < k$. Then, $w$ can be obtained through elementary cancellations from the word
\begin{equation*}
w_{1}^{i_{1}}w_{1}^{k-j_{1}}\overline{w}_{1}^{k-j_{1}}\overline{w}_{1}^{i_{2}}\overline{w}_{1}^{k-j_{2}}w_{1}^{k-j_{2}}\:\cdots\: w_{1}^{i_{2l-1}}w_{1}^{k-j_{2l-1}}\overline{w}_{1}^{k-j_{2l-1}}\overline{w}_{1}^{i_{2l}} = w_{1}^{i_{1} + k - j_{1}}\overline{w}_{1}^{i'_{2} + k - j_{2}}\cdots w_{1}^{i'_{2l-1} + k - j_{2l-1}}\overline{w}_{1}^{i_{2l} + k - j_{2l-1}}.
\end{equation*}
This is a concatenation of elements of $W$, except perhaps for the last term $\overline{w}_{1}^{i_{2l} + k - j_{2l-1}}$. We therefore have to prove that $i_{2l}' = i_{2l} + k - j_{2l-1} = 0 \mod k$.

To show this, we first claim that we have, for all $1\leqslant t\leqslant l$,
\begin{align*}
i_{2t}' & = \sum_{s=1}^{2t}(-1)^{s}i_{s} \mod k \\
i_{2t-1}' & = -\sum_{s=1}^{2t-1}(-1)^{s}i_{s} \mod k.
\end{align*}
Let us prove this by induction.
\begin{itemize}
\item If $t = 1$, then
\begin{equation*}
i_{2}' = i_{2} + k - j_{1} = i_{2} + k + (d_{1}k - i_{1}) = (d_{1}+1)k + i_{2} - i_{1} = i_{2} - i_{1} \mod k.
\end{equation*}
\item Assume that the result holds for some $t\geqslant 1$. Then
\begin{equation*}
i_{t+1}' = i_{t+1} + k - j_{t} = i_{t+1} - i'_{t} \mod k 
\end{equation*}
and the claim follows. 
\end{itemize}
The claim now yields
\begin{equation*}
i_{2l}' = \sum_{t=1}^{k} i_{2t} - i_{2t-1} \mod k.
\end{equation*}
The sum above is nothing but the opposite of the color balance $\cb(w)$, which is a multiple of $k$ by assumption. The proof is therefore complete.
\end{proof}

This gives us our first infinite series of admissible sets of words, and we note that, by definition, $W^{(k)}\subset W^{(k')}$ if and only if $k'\mid k$. The two other series are somehow dual to each other, and are given by generators with vanishing color balance.

\begin{de}
We define, for $0 < k \leqslant \infty$,
\begin{align*}
W^{(\circ, k)} & = \{w = w_{1}\cdots w_{n} \mid \cb(w) = 0 \text{ and } k\geqslant \cb(w_{\leqslant l})\geqslant 0, \:\forall 1\leqslant l\leqslant n\} \\
W^{(\bullet, k)} & = \{w = w_{1}\cdots w_{n} \mid \cb(w) = 0 \text{ and } -k\leqslant \cb(w_{\leqslant l})\leqslant 0, \:\forall 1\leqslant l\leqslant n\}.
\end{align*}
\end{de}

\begin{prop}
The sets $W^{(\circ, k)}$ and $W^{(\bullet, k)}$ are admissible and generated respectively by $\circ^{k}\bullet^{k}$ and by $\bullet^{k}\circ^{k}$.
\end{prop}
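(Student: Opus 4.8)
The plan is to prove both assertions for $W^{(\circ,k)}$ in detail and then deduce the statements for $W^{(\bullet,k)}$ by a colour symmetry. Throughout I would think of a word $w$ as a lattice path with step $+1$ for each $\circ$ and $-1$ for each $\bullet$, so that $\cb(w_{\leqslant l})$ is the height after $l$ steps; then $W^{(\circ,k)}$ consists exactly of the paths from $0$ to $0$ staying in the strip $[0,k]$. Admissibility is the routine part. Since $\cb$ is additive, a concatenation of balanced words is balanced, and a prefix of $w.w'$ is either a prefix of $w$ or has height $\cb(w)+\cb(w'_{\leqslant l})=\cb(w'_{\leqslant l})$, both lying in $[0,k]$. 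Conjugation reverses the height profile (one checks $\cb(\overline{w}_{\leqslant l})=\cb(w_{\leqslant n-l})$), which preserves the strip condition because $\cb(w)=0$. Deleting an elementary factor $\circ\bullet$ or $\bullet\circ$ leaves $\cb$ unchanged and only removes the two prefixes ending inside that factor, so the surviving prefix-heights form a sublist of the original ones and stay in $[0,k]$. Hence $W^{(\circ,k)}$ is admissible, and as $\circ^k\bullet^k\in W^{(\circ,k)}$ we immediately get $\langle\circ^k\bullet^k\rangle\subseteq W^{(\circ,k)}$.

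For the reverse inclusion I would induct on $k$, writing $G_k=\langle\circ^k\bullet^k\rangle$; the base $k=1$ is clear since the only paths in $[0,1]$ are the $(\circ\bullet)^m$. The key device for the inductive step is the \emph{wrapping map} $\phi(v)=\circ v\bullet$. I would first establish the Lemma that $\phi(G_{k-1})\subseteq G_k$, by a structural induction over the generation of $G_{k-1}$. Concretely, the set $S=\{v\in G_{k-1}\mid \phi(v)\in G_k\}$ contains the generator since $\phi(\circ^{k-1}\bullet^{k-1})=\circ^k\bullet^k$, and $S$ is itself admissible because $\phi$ is compatible with all three operations up to one extra cancellation: $\phi(v_1.v_2)$ is obtained from $\phi(v_1).\phi(v_2)$ by cancelling the central $\bullet\circ$, one has $\phi(\overline v)=\overline{\phi(v)}$, and an elementary cancellation inside $v$ becomes the same cancellation inside $\phi(v)$. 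By minimality of $G_{k-1}$ this forces $S=G_{k-1}$, which is the Lemma.

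To conclude, I would use the \emph{first-return decomposition}: any $w\in W^{(\circ,k)}$ factors as $w=p_1\cdots p_r$, where each $p_i$ is \emph{prime}, i.e. returns to height $0$ only at its endpoints. A prime factor begins with $\circ$, ends with $\bullet$, and has interior staying in $[1,k]$, so $p_i=\circ u_i\bullet$ with $u_i\in W^{(\circ,k-1)}$. By the induction on $k$ we have $u_i\in G_{k-1}$, whence $p_i=\phi(u_i)\in G_k$ by the Lemma, and closure of $G_k$ under concatenation gives $w\in G_k$. This yields $W^{(\circ,k)}=\langle\circ^k\bullet^k\rangle$. For the black series I would invoke the colour-swap $\tau$ (flip every letter, with no reversal): it commutes with concatenation, conjugation and elementary cancellation, hence maps admissible sets to admissible sets and satisfies $\tau\langle S\rangle=\langle\tau S\rangle$. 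Since $\tau(W^{(\circ,k)})=W^{(\bullet,k)}$ and $\tau(\circ^k\bullet^k)=\bullet^k\circ^k$, the statement for $W^{(\bullet,k)}$ follows at once.

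The main obstacle is the prime case. One cannot simply ``peel off'' the outer $\circ$ and $\bullet$, because elementary cancellation only shortens words, whereas membership in $G_k$ must be produced by building \emph{up} from the generator via concatenation. The wrapping-map Lemma is precisely what circumvents this difficulty: it turns the level-raising $k-1\mapsto k$ into a controlled application of concatenation followed by a single cancellation, and it is the one step where the structural induction over the generation process (rather than an induction on word length) is essential.
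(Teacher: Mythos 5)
Your proof is correct, but it takes a genuinely different route from the paper's. The paper argues in one shot: writing $w=\circ^{i_{1}}\bullet^{i_{2}}\cdots\circ^{i_{2l-1}}\bullet^{i_{2l}}$, it pads $w$ to an auxiliary word $v=\circ^{i_{1}}\bullet^{i_{1}}\circ^{i'_{2}}\bullet^{i'_{2}}\cdots$ which is visibly a concatenation of words $\circ^{j}\bullet^{j}$ with $j\leqslant k$ (each obtained from the generator by elementary cancellations), and then recovers $w$ from $v$ by further elementary cancellations; the only work is checking that the exponents $i'_{t}$ are the partial colour balances of $w$ and hence lie in $[0,k]$. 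You instead run an induction on $k$ through the wrapping map $\phi(v)=\circ v\bullet$ and the first-return decomposition of a path in the strip $[0,k]$, with the key lemma $\phi(\langle\circ^{k-1}\bullet^{k-1}\rangle)\subseteq\langle\circ^{k}\bullet^{k}\rangle$ established by a structural induction over the closure operations: the set $S=\{v\in G_{k-1}\mid\phi(v)\in G_{k}\}$ contains the generator and is itself admissible, hence equals $G_{k-1}$ by minimality. Both arguments confront the same difficulty --- membership in $\langle\circ^{k}\bullet^{k}\rangle$ must be produced by building up from the generator, not by cancelling down --- the paper by padding, you by wrapping. The paper's version is shorter and needs no induction on $k$; yours trades the exponent bookkeeping for a cleaner recursive structure, and has the added merit that $\phi$ and the first-return factorisation are exactly the devices reused in the proof of Proposition \ref{prop:fusionsymetric} (there written $a(w)=\circ w\bullet$ with its ``maximal decomposition''), so your route would unify the two arguments. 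Your reduction of the $W^{(\bullet,k)}$ case via the colour swap $\tau$ is also sound and fills in what the paper dismisses as ``the proof being similar''.
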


\begin{proof}
We will only prove the statement for $W^{(\circ, k)}$, the proof for $W^{(\bullet, k)}$ being similar. It is clear that the set is stable under concatenation and elementary cancellations since the latter do not change the color balance. As for conjugation, observe that
\begin{equation*}
c_{\leqslant l}(\overline{w}) = -c_{>l}(w) = -c(w) + c_{\leqslant l}(w) = c_{\leqslant l}(w).
\end{equation*}
Therefore, $W^{(\circ, k)}$ is admissible.

By definition, $\circ^{k}\bullet^{k}\in W^{(\circ, k)}$, hence $W = \langle \circ^{k}\bullet^{k}\rangle\subset W^{(\circ, k)}$. Conversely, given $w\in W^{(\circ, k)}$, it can be written as
\begin{equation*}
w = \circ^{i_{1}}\bullet^{i_{2}}\cdots \circ^{i_{2l-1}}\bullet^{i_{2l}}
\end{equation*}
with $i_{t}\leqslant k$ for all $1\leqslant t\leqslant 2l$. Let us set
\begin{equation*}
i_{t}' = \sum_{s=1}^{t}(-1)^{s+1}i_{t}.
\end{equation*}
Since $i_{t}' = \cb(w_{\leqslant t})$, this is positive, hence it makes sense to consider the following word:
\begin{equation*}
v = \circ^{i_{1}}\bullet^{i_{2}}\bullet^{i_{1} - i_{2}}\circ^{i_{1}-i_{2}}\circ^{i_{3}}\bullet^{i_{4}}\bullet^{i_{1} - i_{2} + i_{3} - i_{4}}\circ^{i_{1} - i_{2} + i_{3} - i_{4}}\cdots = \circ^{i_{1}}\bullet^{i_{1}}\circ^{i'_{2}}\bullet^{i_{2}'}\cdots.
\end{equation*}
Clearly, $w$ can be obtained from $v$ by elementary cancellations. Therefore, we only have to prove that $v\in W$, and since $v$ is the concatenation of the words $\circ^{i'_{2t-}}\bullet^{i'_{2t-}}$, it is enough to show that $i'_{2t-1}\leqslant k$. But as we already noticed, $i'_{2t-1} = \cb(w_{\leqslant 2t - 1})$, hence the proof is complete.
\end{proof}

\begin{rem}
Observe that $W^{(\circ, k)}\subsetneq W^{(\circ, k')}$, so that these yield an infinite ascending chain of dual quantum subgroups. This recovers the result of \cite[Thm 6.14]{cirio2014connected} stating that the fusion ring of $U_{N}^{+}$ is not Noetherian, contrary to fusion rings of classical compact Lie groups.
\end{rem}

\subsubsection{The classification}

We will now give the complete list of sets of admissible words. For the sake of clarity, we first give a separate lemma.

\begin{lem}\label{lem:simplificationword}
Let $w\in W^{(\circ, k)}$ and assume that there exist $l$ such that $c(w_{\leqslant l}) = k$. Then, using elementary cancellations, $w$ can be reduced to the word $\circ^{k}\bullet^{k}$.
\end{lem}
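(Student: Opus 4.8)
The plan is to reformulate words as lattice paths: to a word $w = w_{1}\cdots w_{n}$ I associate the path whose $l$-th height is $\cb(w_{\leqslant l})$, reading $\circ$ as an up-step and $\bullet$ as a down-step. Under this dictionary, membership in $W^{(\circ, k)}$ means the path runs from $0$ to $0$ while staying in the band $[0, k]$, and an elementary cancellation of a subword $\circ\bullet$ (resp. $\bullet\circ$) amounts to deleting a peak (resp. a valley) of the path. The first fact I would record is that such a deletion leaves every other partial balance unchanged: if $w = x\,ab\,y$ with $a, b$ of opposite colours, then $\cb(xab) = \cb(x)$, so the partial balances of the reduced word $xy$ form a subset of those of $w$. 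In particular every intermediate word stays in $W^{(\circ, k)}$ and, crucially, no cancellation can ever raise a height.

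With this in hand, I would split $w$ at the prescribed position, writing $w = uv$ with $u = w_{\leqslant l}$ and $v = w_{>l}$. Since $\cb(w_{\leqslant l}) = k$ is the maximal value, $u$ is a path from $0$ to $k$ inside $[0, k]$ and $v$ is a path from $k$ to $0$ inside $[0, k]$; moreover $w_{l} = \circ$ and $w_{l+1} = \bullet$ (any other colour would force a height outside $[0,k]$), so the boundary carries the single peak at height $k$ that must survive. The strategy is then to reduce $u$ to $\circ^{k}$ and $v$ to $\bullet^{k}$ using only cancellations internal to each block, so that this boundary peak is never touched and the concatenation collapses precisely to $\circ^{k}\bullet^{k}$.

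To reduce $u$, I would induct on its number of down-steps. As $u$ begins with $\circ$, the first $\bullet$ occurring in $u$ is immediately preceded by a $\circ$ and sits strictly before position $l$ (because $w_{l} = \circ$); cancelling this $\circ\bullet$ removes one down-step, keeps $u$ a path from $0$ to $k$ that still begins with $\circ$, and stays strictly inside $u$. Iterating destroys all down-steps, leaving $u = \circ^{k}$. Symmetrically, for $v$ I would induct on the number of up-steps: since $v$ ends with $\bullet$, its last $\circ$ is followed by a $\bullet$, and cancelling that $\circ\bullet$ removes one up-step while keeping the reduction internal to $v$; iterating yields $v = \bullet^{k}$.

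The only genuinely delicate point, and the one I would stress in writing, is the boundary bookkeeping: one must check that every cancellation used in reducing $u$ (resp. $v$) involves letters strictly to the left of position $l+1$ (resp. strictly to the right of position $l$), so that the maximal peak $w_{l}w_{l+1} = \circ\bullet$ is never consumed. This is exactly what the two invariants isolated above guarantee, namely that $u$ always ends in $\circ$ and $v$ always starts at height $k$ throughout the process. Granting this, combining the two reductions gives $w = uv \to \circ^{k}\bullet^{k}$, as claimed. I do not expect any serious obstacle beyond this careful separation of the prefix and suffix reductions; the lattice-path observation that cancellations only lower heights is what makes the argument routine.
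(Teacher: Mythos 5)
Your argument is correct, and it takes a genuinely different route from the paper's. The paper writes $w = \circ^{i_{1}}\bullet^{j_{1}}\cdots\circ^{i_{n+1}}\bullet^{j_{n+1}}$ and inducts on the number of runs, splitting into two cases according to whether the maximum $k$ is attained in the last $\circ$-run or not, and in each case cancelling a whole block at one extremity of the word (either $\circ^{i_{n+1}}$ against part of $\bullet^{j_{n+1}}$, using $j_{n+1}\geqslant i_{n+1}$, or $\bullet^{j_{1}}$ against part of $\circ^{i_{1}}$, using $i_{1}\geqslant j_{1}$) so as to decrease the number of runs while keeping the word in $W^{(\circ, k)}$ with the value $k$ still attained. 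You instead split $w = uv$ at the position where the height $k$ is reached and reduce the two halves independently by greedily cancelling the innermost $\circ\bullet$ pair; your observation that the partial balances of the reduced word form a subset of those of the original plays the same role as the paper's verification that its intermediate words remain in $W^{(\circ, k)}$. Your organization buys the absence of any case distinction and of the run-length bookkeeping, and the lattice-path picture makes the invariants transparent; note that the boundary issue you flag as delicate is in fact automatic, since any cancellation of an adjacent pair inside $u$ (respectively $v$) is an adjacent pair inside $uv$, and once each half is reduced to a constant word its identity $\circ^{k}$ (respectively $\bullet^{k}$) is forced by the colour balance alone. Both proofs are complete.
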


\begin{proof}
We will write $w = \circ^{i_{1}}\bullet^{j_{1}}\cdots\circ^{i_{n}}\bullet^{j_{n}}$ and proceed by induction on $n$. Il $n = 1$, then by assumption $w = \circ^{k}\bullet^{k}$. Let us therefore assume that the result holds for some $n$ and consider $w = \circ^{i_{1}}\bullet^{j_{1}}\cdots\circ^{i_{n+1}}\bullet^{j_{n+1}}$. Let $l$ be such that $c(w_{\leqslant l}) = k$, and observe that $l = i_{1} + j_{1} + \cdots + i_{t}$ for some $1\leqslant t\leqslant n+1$. Let us assume for the moment that $t\leqslant n$ and observe that
\begin{equation*}
i_{n+1} - j_{n+1} = - c(w_{\leqslant i_{1} + j_{1} + \cdots + i_{n} + j_{n}}) \leqslant 0
\end{equation*}
so that $j_{n+1}\geqslant i_{n+1}$ and we can use elementary cancellations to reduce $w$ to
\begin{equation*}
\widetilde{w} = \circ^{i_{1}}\bullet^{j_{1}}\cdots \circ^{i_{n}}\bullet^{j_{n} + j_{n+1} - i_{n+1}}.
\end{equation*}
We still have $\widetilde{w}\in W^{(\circ, k)}$ and moreover, $c(\widetilde{w}_{\leqslant l}) = c(w_{\leqslant l}) = k$, hence we can conclude by induction.

If instead $t = n+1$, then because $i_{1} - j_{1} = c(w_{\leqslant i_{1} + j_{1}}) \geqslant 0$ we can use elementary cancellations to reduce $w$ to
\begin{equation*}
w' = \circ^{i_{1} - j_{1} + i_{2}}\bullet^{j_{2}}\cdots \circ^{i_{n+1}}\bullet^{j_{n+1}}.
\end{equation*}
Again, $w'\in W^{(\circ, k)}$ and since we have removed a subwords with vanishing color balance,
\begin{equation*}
c\left(w'_{\leqslant i_{1} - j_{1} + i_{2} + j_{2} + \cdots + i_{n+1}}\right) = c(w_{\leqslant l}) = k,
\end{equation*}
so that we can conclude once more by induction.
\end{proof}

With this, we are in position to state and prove our classification theorem.

\begin{thm}\label{thm:classificationunitary}
The admissible sets of words are exactly the following ones:
\begin{itemize}
\item The empty set;
\item $W^{(k)}$, $W^{(\circ, k)}$, $W^{(\bullet, k)}$ for $k\in \N$;
\item $W^{(k, k')} = \left\langle W^{(\circ, k)}, W^{(\bullet, k')}\right\rangle$ for $k, k'\in \N$;
\end{itemize}
\end{thm}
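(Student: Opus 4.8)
The plan is to show that the list is exhaustive by taking an arbitrary admissible set $W$ and proving that it must coincide with one of the listed types. I would organize the argument around a single invariant: the set $C(W) = \{\cb(w) \mid w\in W\}\subset\Z$, which records which color balances occur. Since $W$ is closed under concatenation and conjugation (and conjugation negates color balance), $C(W)$ is a subgroup of $\Z$, hence either $\{0\}$ or $k\Z$ for a unique $k\geqslant 1$. This gives the fundamental dichotomy: either every word in $W$ has color balance $0$, or the color balances form a nontrivial subgroup $k\Z$.

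\emph{The case of nonzero balances.} Suppose $C(W) = k\Z$ with $k\geqslant 1$. The claim is then that $W = W^{(k)}$. One inclusion is the definition of $C(W)$ together with $W^{(k)} = \langle\circ^{k}\rangle$ from Proposition \ref{prop:wk}: since some word of color balance $k$ lies in $W$, I would argue it can be reduced by elementary cancellations to $\circ^{k}$ (or conjugate), whence $\circ^{k}\in W$ and therefore $W^{(k)}\subset W$. For the reverse inclusion $W\subset W^{(k)}$, every $w\in W$ has $\cb(w)\in k\Z$, which is precisely membership in $W^{(k)}$. The subtlety I would need to check carefully is the reduction of an arbitrary balance-$k$ word to $\circ^{k}$; this is the kind of elementary-cancellation bookkeeping already carried out in the proofs of Proposition \ref{prop:wk} and Lemma \ref{lem:simplificationword}, so I would model it on those.

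\emph{The case where all balances vanish.} Now suppose every $w\in W$ has $\cb(w) = 0$. Here the relevant invariant is the \emph{range of partial balances}: for $w = w_{1}\cdots w_{n}$, the set of values $\cb(w_{\leqslant l})$ attained as $l$ varies. I would introduce $k = \sup_{w\in W,\, l}\cb(w_{\leqslant l})\in\{0,1,\dots,\infty\}$ and, dually, $k' = -\inf_{w\in W,\,l}\cb(w_{\leqslant l})$. The definitions of $W^{(\circ,k)}$ and $W^{(\bullet,k')}$ say exactly that their members never exceed balance $k$ above, resp.\ $k'$ below, zero. The goal is to show $W = W^{(k,k')} = \langle W^{(\circ,k)}, W^{(\bullet,k')}\rangle$, with the degenerate case $k=k'=0$ giving the empty-balance-only set generated by $\circ\bullet$ and $\bullet\circ$ — which, I should note, is itself $W^{(\circ,0)}=W^{(\bullet,0)}$ and should be identified with the appropriate entry (or the empty set if $W$ contains no nonempty word). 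For $W\subset W^{(k,k')}$ I would show that any $w\in W$ decomposes, via the partial-balance profile, into pieces each lying in $W^{(\circ,k)}$ or $W^{(\bullet,k')}$: whenever the partial balance stays nonnegative on a maximal stretch it contributes a $W^{(\circ,k)}$ factor, and on a nonpositive stretch a $W^{(\bullet,k')}$ factor. For $W^{(k,k')}\subset W$ I would exhibit generators: Lemma \ref{lem:simplificationword} shows that any word achieving partial balance exactly $k$ reduces to $\circ^{k}\bullet^{k}$, so a word in $W$ realizing the supremum $k$ produces the generator $\circ^{k}\bullet^{k}\in W$, and symmetrically $\bullet^{k'}\circ^{k'}\in W$; these generate $W^{(k,k')}$.

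\emph{The main obstacle} I anticipate is making the "realize the supremum" step rigorous when $k$ or $k'$ could a priori be infinite, and ensuring that $k$ and $k'$ are genuinely independent parameters rather than constrained to be equal. The cleanest route is to prove that $k$ is \emph{attained} by some single word whenever it is finite — i.e.\ that the supremum is a maximum — and then feed that word into Lemma \ref{lem:simplificationword}; if $k=\infty$ I would instead argue that $W^{(\circ,k)}$ for every finite $k$ sits inside $W$ by taking words of ever-larger partial balance, so $W$ contains $W^{(\circ,\infty)}$. A secondary care point is verifying that the decomposition into $W^{(\circ,k)}$- and $W^{(\bullet,k')}$-pieces really only uses concatenation (not cancellation across the pieces), so that it witnesses membership in the generated set $\langle W^{(\circ,k)},W^{(\bullet,k')}\rangle$; I expect this to follow by cutting $w$ at the indices where its partial balance returns to $0$, since at each such return point the prefix has vanishing balance and the profile on the next stretch has constant sign.
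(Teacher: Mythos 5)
Your argument is correct in substance but organized quite differently from the paper's. The paper first classifies the admissible sets generated by a \emph{single} word (splitting on whether its color balance vanishes, using Proposition \ref{prop:wk} and Lemma \ref{lem:simplificationword}), and then obtains the general case by writing an arbitrary $W$ as the join of the singly-generated sets coming from its elements, finishing with three lattice identities such as $\langle W^{(k)}, W^{(k')}\rangle = W^{(\gcd(k,k'))}$. You instead attach global invariants to $W$ itself: the subgroup $C(W)\subset\Z$ of color balances, and then, in the balanced case, the extremal partial balances $k$ and $k'$. Your route avoids the join computations entirely and makes the dichotomy (some nonzero balance versus all balances zero) structurally transparent; in particular your observation that $W\subset W^{(k)}$ is automatic once $C(W)=k\Z$, so that only $\circ^{k}\in W$ needs to be produced, is cleaner than the paper's two-step reduction. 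The forward inclusion $W\subset W^{(k,k')}$ via cutting each word at the returns of its partial balance to $0$ is also sound: consecutive cut points bound an excursion of constant sign, which lies in $W^{(\circ,k)}$ or $W^{(\bullet,k')}$, and only concatenation is used. Both approaches ultimately rest on the same two lemmas.

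One step does need to be patched. For the reverse inclusion $W^{(\circ,k)}\subset W$ you propose to take a word $w\in W$ attaining the supremum $k$ and ``feed it into Lemma \ref{lem:simplificationword}''; but that lemma requires $w\in W^{(\circ,k)}$, and a word of $W$ attaining partial balance $k$ may also have negative excursions (e.g.\ $\bullet\circ\circ\bullet\bullet\circ$ attains $+1$ but is not in $W^{(\circ,1)}$). You must first isolate the positive excursion $v$ containing the attaining index, and verify $v\in W$: this follows because the prefix and suffix surrounding $v$ each have color balance $0$, hence can be erased by elementary cancellations performed entirely inside them, and $W$ is closed under such cancellations. Only then does Lemma \ref{lem:simplificationword} give $\circ^{k}\bullet^{k}\in W$, hence $W^{(\circ,k)}=\langle\circ^{k}\bullet^{k}\rangle\subset W$. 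Finally, a small slip: the degenerate case $k=k'=0$ forces every word of $W$ to be empty (a nonempty word has first partial balance $\pm 1$), so it yields the set consisting of the empty word alone, not the set generated by $\circ\bullet$ and $\bullet\circ$ (the latter is $W^{(1,1)}$).
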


\begin{proof}
Let $W$ be a non-empty admissible set of words and let us first assume that $W$ is generated by one word $w$. If $\cb(w) \neq 0$, then by using elementary cancellations we have that $\circ^{\cb(w)}\in W$ or $\bullet^{\cb(w)}\in W$. The two cases are similar, hence we will only consider the first one. By Proposition \ref{prop:wk} $W^{(\cb(w))}\subset W$. But $w\in W^{\cb(w)}$ by definition, so that $W = W^{\cb(w)}$. If $\cb(w) = 0$, let us assume that the first letter is $\circ$ (the other case being similar) and set
\begin{equation*}
k = \max\{\cb(w_{\leqslant l}) \mid 1\leqslant l\leqslant \vert w\vert\},
\end{equation*}
so that $W \subset W^{(\circ, k)}$. Moreover, by Lemma \ref{lem:simplificationword}, we can reduce $w$ to $\circ^{k}\bullet^{k}$. In other words, $W^{(\circ, k)} = \langle \circ^{k}\bullet^{k}\rangle\subset W$.

If now $W$ is arbitrary, then it is generated by the admissible sets of words generated by each of its elements. The result therefore follows from the following elementary observations:
\begin{itemize}
\item $\left\langle W^{(k)}, W^{(k')}\right\rangle = W^{\gcd(k, k')}$. Indeed, the left-hand side is by definition contained in the right-hand side. Moreover, if $ak - bk' = \gcd(k, k')$ with $a, b\in \N$, then $\circ^{ak}\bullet^{bk'}\in W^{(k)}\cup W^{(k')}$ and generates $W^{\gcd(k, k')}$. If instead $a$ and $b$ are negative, then considering $\circ^{bk'}\bullet^{ak}$ yields the same result.
\item By mere definition, $W^{(\circ, k)}, W^{(\bullet, k)}\subset W^{(k')}$ for all $k, k'\in \N$.
\item Still by definition, $W^{(\circ, k)}\subset W^{(\circ, k')}$ and $W^{(\bullet, k)}\subset W^{(\bullet, k')}$ for all $k\leqslant k'$.
\end{itemize}
\end{proof}

Let us point out two interesting facts appearing in Theorem \ref{thm:classificationunitary}. The first one is that there are three admissible sets of words which are not finitely generated. This means that the corresponding compact quantum groups are not compact matrix quantum groups. Thinking of $U_{N}^{+}$ as the dual of a free quantum group, this is not so surprising since it is well known that a finitely generated free group has subgroups which are not finitely generated. The second fact is that besides these three exceptions, all dual quantum subgroups of $U_{N}^{+}$ are generated by one or two irreducible representations. Besides, as we will see below, when two generators are needed then there is a simple way of describing the quantum group as a free product where each factor involves only one generator.

\section{Applications tp quantum subgroups of $\widehat{U_N^+}$} \label{sec:quantumsubgroups}

We now study the quantum groups corresponding to the modules arising in the previous section.

\subsection{Fusion semi-rings}\label{subsec:fusionrings}

In view of Theorem \ref{thm:classificationunitary}, it is natural  to investigate the compact quantum groups appearing as dual quantum subgroups of $U_{N}^{+}$. Let us denote by $\G_{N}(W)$ the dual quantum subgroup of $U_{N}^{+}$ corresponding to a set of words $W$. Since these are defined through their representation category, we may first try to compute the corresponding fusion semi-rings.

Before embarking into this, let us make a few remarks to simplify our work:
\begin{itemize}
\item The sets $W^{(\circ, k)}$ and $W^{(\bullet, k)}$ yield anti-isomorphic quantum groups (see Proposition \ref{prop:antiisomorphic}), hence anti-isomorphic fusion rings. We will therefore only consider the first one;
\item The quantum group corresponding to $W^{(k, k')}$ is the free product of those corresponding to $W^{(\circ, k)}$ and $W^{(\bullet, k')}$ (see Proposition \ref{prop:antiisomorphic}). This means that the fusion ring of the former can be recovered from those of the latter by \cite[Thm 3.10]{wang1995free}. We will therefore not consider $W^{(k, k')}$.
\end{itemize}

Let us now recall a few facts concerning fusion semi-rings. Given a compact quantum group $\G$, we denote by $R^{+}(\G)$ the free abelian semi-group on the set $\Irr(\G)$ of equivalence classes of irreducible representations of $\G$. It can be endowed with a multiplicative structure, usually denoted by $\otimes$, through the formula
\begin{equation*}
\alpha\otimes \beta = \sum_{\gamma\subset \alpha\otimes \beta}m(\gamma, \alpha\otimes \beta)\gamma,
\end{equation*}
where $m(\gamma, \alpha\otimes\beta)$ denotes the multiplicity of the inclusion. It is also endowed with an antilinear and anti-multiplicative involution sending $\alpha$ to its conjugate representation $\overline{\alpha}$. The data of the semi-ring called $R^{+}(\G)$ together with its involution is called the \emph{fusion semi-ring} of $\G$.

We will now compute the fusion rings corresponding to sets of words generated by a symetric element, that is to say $W^{(\circ, k)}$. We already have some information: for $k = 1$, this is by definition the projective version of $U_{N}^{+}$, which is known to be isomorphic to the quantum automorphism group of $M_{N}(\C)$ equipped with the trace (see for instance \cite[Prop 3.1]{banica2010invariants}). Because that quantum group will be extensively used in the sequel, we will denote it by $\QA$.

Our final result will be that $\G(W^{(\circ, k+1)})$ can be obtained from $\G(W^{(\circ, k)})$ using a construction called the \emph{free wreath product}, but for the moment we just want to prove a version of this at the level of fusion semi-rings. We will therefore use an ad hoc definition.

\begin{de}
Let $\G$ be a compact quantum group and let $R^{+, \wr}(\G)$ be the free abelian semi-group on all words on $\Irr(\G)$. Given two words $w = \alpha_{1}\cdots \alpha_{n}$ and $w' = \beta_{1}\cdots \beta_{k}$ on $\Irr(\G)$, we set
\begin{align*}
w\otimes w' & = \alpha_{1}\cdots \alpha_{n}\beta_{1}\cdots \beta_{k} + \sum_{\gamma\subset \alpha_{n}\otimes \beta_{1}}m(\gamma, \alpha_{n}\otimes \beta_{1})\alpha_{1}\cdots \alpha_{n-1}\gamma \beta_{1}\cdots \beta_{k} \\
& + \delta_{\alpha_{n} = \overline{\beta}_{1}} (\alpha_{1}\cdots \alpha_{n-1})\otimes (\beta_{1}\cdots \beta_{k}).
\end{align*}
This inductively defines a semi-ring structure on $R^{+, \wr}(\G)$. We moreover define an involution by $\overline{w} = \overline{\alpha}_{n}\cdots \overline{\alpha}_{1}$.
\end{de}

Note that $R^{+, \wr}$ is generated, as a semi-ring, by the elements of $\Irr(\G)$ seen as words with one letter. Let us introduce some additional shortand notations for convenience. The fusion semi-ring of $\G_{N}(W)$ will simply be denoted by $R^{+}(W)$. Moreover, irreducible representations of the former quantum group will be denoted by the corresponding word in $W$ so that $R^{+}(W)$ is freely generated as an abelian group by the elements of $W$. We can now describe $R^{+}(W^{(\circ, k+1)})$ in terms of $R^{+, \wr}(W^{(\circ, k)})$.

\begin{prop}\label{prop:fusionsymetric}
Let $k\in \N$. Set, for $w\in W^{(\circ, k)}$, $a(w) = \circ w\bullet$. Then exists a semi-ring isomorphism
\begin{equation*}
\Psi : R^{+, \wr}\left(W^{(\circ, k)}\right) \to R^{+}\left(W^{(\circ, k+1)}\right)
\end{equation*}
such that $\Phi(w) = a(w)$ for all $w\in W^{(\circ, k)}$.
\end{prop}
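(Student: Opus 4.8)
The plan is to build the isomorphism $\Psi$ explicitly on generators and verify it respects the semi-ring structure, using the fusion rules of $U_N^+$ recalled at the end of Section \ref{sec:preliminaries}. First I would check that the map $a\colon w\mapsto \circ w\bullet$ sends $W^{(\circ,k)}$ bijectively onto a spanning set of $W^{(\circ,k+1)}$. Indeed, if $w\in W^{(\circ,k)}$ has $\cb(w)=0$ and $0\le \cb(w_{\le l})\le k$ for all $l$, then $\circ w\bullet$ has vanishing color balance and its partial balances range in $[1,k+1]$; conversely the words in $W^{(\circ,k+1)}$ that begin with $\circ$, end with $\bullet$, and never return to balance $0$ in between are exactly the $a(w)$. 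These are the ``irreducible building blocks'': every word in $W^{(\circ,k+1)}$ factors uniquely as a concatenation of such blocks, which is precisely the word/letter structure of $R^{+,\wr}$. This gives a well-defined semi-group isomorphism $\Psi$ on the level of free abelian semi-groups, with $\Psi(w)=a(w)$ on single-letter words.

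Next I would verify that $\Psi$ is multiplicative, i.e.\ that it intertwines the wreath-type product with the genuine tensor product of representations of $\G_N(W^{(\circ,k+1)})$. Since $R^+(W^{(\circ,k+1)})$ inherits the fusion rules of $U_N^+$, the product $u^{w_1}\otimes u^{w_2}$ decomposes as $\sum_{w_1=az,\,w_2=\overline z b} u^{ab}$, restricted to those summands $ab$ that again lie in $W^{(\circ,k+1)}$. The key is to show that for a pair of words of the form $a(w)=\circ w\bullet$ and $a(w')=\circ w'\bullet$, the admissible cancellations $z$ are governed exactly by the three terms in the definition of $w\otimes w'$ in $R^{+,\wr}$: the concatenation term corresponds to $z$ empty; the middle sum over $\gamma\subset\alpha_n\otimes\beta_1$ corresponds to cancelling the inner $\bullet\circ$ junction and letting the adjacent blocks fuse; and the term $\delta_{\alpha_n=\overline\beta_1}$ corresponds to the full collapse of a matching block-pair, which is where the recursion $(\alpha_1\cdots\alpha_{n-1})\otimes(\beta_1\cdots\beta_k)$ enters. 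I would make this precise by induction on the total number of blocks, peeling off one inner junction at a time and matching it against one application of the inductive clause in the $R^{+,\wr}$ product.

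I would then confirm that $\Psi$ respects the involutions: $\overline{a(w)}=\circ\,\overline w\,\bullet=a(\overline w)$ by the definition of conjugation on words, so $\Psi(\overline w)=\overline{\Psi(w)}$ on single letters, and anti-multiplicativity of both involutions extends this to all of $R^{+,\wr}$.

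The main obstacle will be the multiplicativity step, specifically the careful bookkeeping of how admissible cancellations at the junction $\bullet\circ$ between two blocks $\circ w\bullet$ and $\circ w'\bullet$ translate into the fusion of the interior representations $u^w$ and $u^{w'}$. One must argue that an inner cancellation can only proceed from the single middle junction (never crossing into a region where a partial balance would drop below the allowed range), so that the recursion is genuinely on the outermost matched pair and no spurious cross-terms appear. This is exactly where the constraint $\cb(w_{\le l})\ge 0$ built into $W^{(\circ,k)}$ does the work, confining the cancellations and forcing the product to collapse blockwise in agreement with the inductive definition of $\otimes$ on $R^{+,\wr}$.
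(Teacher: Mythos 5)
Your proposal is correct and follows essentially the same route as the paper: unique factorization of words in $W^{(\circ,k+1)}$ into blocks $a(w)=\circ w\bullet$ characterized by not returning to balance zero internally, verification of multiplicativity against the fusion rules of $U_{N}^{+}$, and compatibility with conjugation via $\overline{a(w)}=a(\overline{w})$. The only cosmetic difference is that the paper checks multiplicativity on single-letter generators only (these generate $R^{+,\wr}$ as a semi-ring), which replaces your block-by-block induction with one direct computation.
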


\begin{proof}
We first claim that any element of $W^{(\circ, k+1)}$ can be written as a word on elements of the form $a(w')$ for $w'\in W^{(\circ, k)}$. Let us prove this by induction. The smallest word in $W^{(\circ, k+1)}$ is $\circ \bullet$, which is $a(\emptyset)$. Assuming that the result holds for all words of length at most $n$ for some $n\geqslant 2$, let us consider $w = w_{1}\cdots w_{n+2}$ and set 
\begin{equation*}
i = \min\{j \mid \cb(w_{\leqslant j}) = 0\}.
\end{equation*}
Then, by definition of $W^{(\circ, k+1)}$, we have $w_{1} = \circ$, $w_{i} = \bullet$ and
\begin{equation*}
\cb(w_{i+1}\cdots w_{j}) = \cb(w_{\leqslant j})
\end{equation*}
for all $j\geqslant i+1$, so that $w_{>i}\in W^{(\circ, k+1)}$. Moreover, by definition of $i$, for any $2\leqslant j\leqslant i-1$,
\begin{equation*}
k\geqslant \cb(w_{2}\cdots w_{j}) = \cb(w_{\leqslant j}) - 1 \geqslant 0
\end{equation*}
Summing up, $w' = w_{2}\cdots w_{i-1}\in W^{(\circ, k)}$ and $w = a(w')w_{>i}$. The claim then follows by induction.

Let us now set $\Psi(w) = a(w)$ for $w\in W^{(\circ, k)}$ and extend it to words on $W^{(\circ, k)}$ by simply concatenating the images. The claim above shows that $\Psi : R^{+, \wr}(W^{(\circ, k)}) \to R^{+}(W^{(\circ, k)})$ is a surjective morphism of abelian groups and we will now prove that it is also injective. For that purpose, let $v\in W^{\circ, k+1}$ and write it as $v = a(w^{1})\cdots a(w^{n})$ with a ``maximal'' decomposition in the sense that $\cb(a(w^{i})_{\leqslant j}) > 0$ for all $j < \vert w^{i}\vert + 2$ and $1\leqslant i\leqslant n$ (this is exactly the decomposition provided by the procedure above). Let us assume that there is another decomposition $v = a(w^{\prime 1})\cdots a(w^{\prime m})$ and observe that because of the maximality condition, there exists $n_{1}$ such that $a(w^{\prime 1}) = a(w^{1})\cdots a(w^{n_{1}})$. If $n_{1} > 1$, then
\begin{equation*}
\cb\left(w^{\prime 1}_{\leqslant \vert w_{1}\vert + 1}\right) = \cb(\circ w_{1}\bullet) - \cb(\circ) = -1,
\end{equation*}
contradicting $w^{\prime 1}\in W^{\circ, k}$. Thus, $w^{\prime 1} = w^{1}$ and a straightforward induction shows that $m = n$ and $w^{\prime i} = w^{i}$ for all $1\leqslant i\leqslant n$, proving injectivity.

To conclude, we must now show that $\Psi$ also respects the mutliplicative structure and the involution. The second one is clear from the fact that $\overline{a(w)} = a(\overline{w})$. As for the first one, it is enough to check it on the generators and this is a simple calculation:
\begin{align*}
a(w)\otimes a(w') & = (\circ w\bullet).(\circ w'\bullet) \\
& = \circ w\bullet\circ w\bullet + (\circ w).(w'\bullet) \\
& = a(w)a(w') + \circ(w\otimes w')\bullet \\
& = a(w)a(w') + \circ\left(\sum_{w = az, w' = \overline{z}b}ab\right)\bullet \\
& = a(w)a(w') + \sum_{w = az, w' = \overline{z}b}a(ab)
\end{align*}
\end{proof}

Using this, we will prove in Theorem \ref{thm:iteratedwreathsymetric} that $\G_{N}(W^{(\circ, k)}))$ can be obtained by iterated free wreath product by $\QA$.

\subsection{Dual quantum subgroups}

We will now complete our classification by giving an explicit description of all the dual quantum subgroups of $U_{N}^{+}$. The computation of fusion rings already gives natural candidates, and we will see how we can use the results of Section \ref{subsec:fusionrings} to compute the corresponding dual quantum subgroups. 

\subsubsection{The symetric case}

But first things first, we will start by proving the claims at the beginning of Section \ref{subsec:fusionrings}.

\begin{prop}\label{prop:antiisomorphic}
For any integer $k$, there is an anti-isomorphism of CQG-algebras
\begin{equation*}
G\left(W^{(\circ, k)}\right)\cong \G\left(W^{(\bullet, k)}\right).
\end{equation*}
Moreover, there is a CQG-algebra isomorphism
\begin{equation*}
\G\left(W^{(k, k')}\right) \cong \G\left(W^{(\circ, k)}\right)\ast\G\left(W^{(\bullet, k')}\right)
\end{equation*}
\end{prop}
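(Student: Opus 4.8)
The plan is to derive both statements from explicit symmetries of $\O(U_N^+)$ together with the combinatorics of admissible words developed in Theorem \ref{thm:classificationunitary}.

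\emph{The anti-isomorphism.} First I would produce a $*$-anti-automorphism $R$ of $\O(U_N^+)$ fixing the generators $u_{ij}$. Its existence comes from the universal property of $U_N^+$: giving an anti-homomorphism out of $\O(U_N^+)$ is the same as giving a $*$-homomorphism into $\O(U_N^+)^{\mathrm{op}}$, and $u_{ij}\mapsto u_{ij}$ is admissible there because the defining relations (unitarity of $u$ and of $\overline u$) are stable under reversing the order of multiplication. One then checks directly that $R$ is involutive, $*$-preserving and comultiplicative, hence a genuine CQG-algebra anti-automorphism. Because $R$ reverses products, it sends a coefficient $u^{w_1}_{i_1j_1}\cdots u^{w_n}_{i_nj_n}$ of $u^{\otimes w}$ to a coefficient of $u^{\otimes \overleftarrow{w}}$, where $\overleftarrow{w}=w_n\cdots w_1$, so it maps the span of coefficients of $u^w$ onto that of $u^{\overleftarrow{w}}$. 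It remains to note that $w\mapsto\overleftarrow{w}$ is a bijection $W^{(\circ,k)}\to W^{(\bullet,k)}$: since $\cb(w)=0$ one has $\cb\big((\overleftarrow w)_{\leqslant l}\big)=\cb(w_{>\vert w\vert-l})=-\cb(w_{\leqslant\vert w\vert-l})$, so prefix balances in $[0,k]$ become prefix balances in $[-k,0]$. Thus $R$ restricts to the claimed anti-isomorphism.

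\emph{The free product.} Write $\G_1=\G(W^{(\circ,k)})$ and $\G_2=\G(W^{(\bullet,k')})$. Both are dual quantum subgroups inside $\G(W^{(k,k')})\subset U_N^+$, so the universal property of Wang's free product of CQG-algebras gives a Hopf $*$-algebra map $\pi:\O(\G_1)\ast\O(\G_2)\to\O(\G(W^{(k,k')}))$. Since $W^{(k,k')}=\langle W^{(\circ,k)},W^{(\bullet,k')}\rangle$ by Theorem \ref{thm:classificationunitary}, the coefficients of the $u^w$ for $w\in W^{(\circ,k)}\cup W^{(\bullet,k')}$ already generate the target, so $\pi$ is surjective; the whole difficulty is injectivity.

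For injectivity I would match the irreducibles on both sides. On the free-product side, Wang's theorem \cite[Thm 3.10]{wang1995free} identifies $\Irr(\G_1\ast\G_2)$ with the reduced alternating words in $W^{(\circ,k)}\setminus\{\emptyset\}$ and $W^{(\bullet,k')}\setminus\{\emptyset\}$, pairwise inequivalent, with an explicit recursive fusion rule. The key combinatorial lemma is that each $w\in W^{(k,k')}$ has a unique decomposition into the maximal nonnegative and nonpositive excursions of its color-balance path, the nonnegative ones lying in $W^{(\circ,k)}$ and the nonpositive ones in $W^{(\bullet,k')}$; this matches the alternating-word description exactly, the correspondence being a bijection precisely because $W^{(\circ,k)}\cap W^{(\bullet,k')}=\{\emptyset\}$. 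I would then verify, using the fusion rule of $U_N^+$ exactly as in Proposition \ref{prop:fusionsymetric}, that $\pi$ sends an alternating word $a_1b_1\cdots$ to $u^{a_1}\otimes u^{b_1}\otimes\cdots$, whose leading component is the irreducible $u^{a_1b_1\cdots}$ indexed by the concatenation while the other components correspond to strictly shorter alternating words. An induction on length then shows that $\pi$ induces a fusion-preserving bijection of irreducibles, hence is an isomorphism of CQG-algebras. The routine part is the anti-isomorphism; the main obstacle is this injectivity of $\pi$, that is, checking that the excursion decomposition matches the recursive free-product fusion rule term by term, which is where the computation of Proposition \ref{prop:fusionsymetric} must be adapted carefully.
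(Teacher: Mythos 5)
Your strategy is sound and both halves can be completed as you describe, but the two halves compare differently with the paper. For the anti-isomorphism you take a genuinely different route: the paper applies the antipode $S(u_{ij})=u_{ji}^{*}$ and claims it carries the coefficients of $u^{\otimes \circ^{k}\bullet^{k}}$ to those of $u^{\otimes\bullet^{k}\circ^{k}}$, whereas you use the anti-automorphism $R$ fixing the generators $u_{ij}$. Your choice is the one that actually works: a dual quantum subgroup is a Hopf $*$-subalgebra, hence stable under the antipode, so $S$ alone cannot interchange two distinct dual quantum subgroups. Concretely, the colour flip built into $S(u_{ij})=u_{ji}^{*}$ undoes the reversal of tensor factors coming from anti-multiplicativity: $S$ sends $u_{i_{1}j_{1}}\cdots u_{i_{k}j_{k}}u^{*}_{a_{1}b_{1}}\cdots u^{*}_{a_{k}b_{k}}$ to $u_{b_{k}a_{k}}\cdots u_{b_{1}a_{1}}u^{*}_{j_{k}i_{k}}\cdots u^{*}_{j_{1}i_{1}}$, again a coefficient of $u^{\otimes\circ^{k}\bullet^{k}}$. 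Your $R$ reverses the order \emph{without} flipping colours, so it really does implement $w\mapsto\overleftarrow{w}$ on coefficient spans, and your verification that $\overleftarrow{(\cdot)}$ is a bijection $W^{(\circ,k)}\to W^{(\bullet,k)}$ is correct.

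For the free product your argument is in essence the paper's, only organized through Wang's description of $\Irr(\G_{1}\ast\G_{2})$ and an excursion decomposition of words in $W^{(k,k')}$. The paper short-circuits this by invoking the standard criterion that it suffices to prove every alternating tensor product of non-trivial irreducibles of the two factors to be irreducible, which is immediate from the fusion rules of $U_{N}^{+}$: every word of $W^{(\circ,k)}$ starts with $\circ$ and ends with $\bullet$, every word of $W^{(\bullet,k')}$ starts with $\bullet$ and ends with $\circ$, so no cancellation can occur at any junction and $u^{w}\otimes u^{w'}=u^{ww'}$. One point in your sketch needs correcting: you anticipate that the image of an alternating word has a ``leading component'' together with components indexed by strictly shorter words. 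In fact there are no other components, and this matters: if $\pi$ sent an irreducible of the free product to a reducible representation, the Haar-state (or fusion-matching) argument for injectivity would break down. Carrying out the junction computation you propose will show exactly this, so the issue is one of anticipation rather than substance; the excursion decomposition is then needed only to see that distinct alternating words yield distinct, hence inequivalent, concatenated irreducibles.
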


\begin{proof}
Let $S$ be the antipode of $U_{N}^{+}$, which is the unique $*$-anti-homomorphism such that $S(u_{ij}) = u_{ji}^{*}$. Then, applying $S$ coefficient-wise to representations, we have the equality $S(u^{\circ\otimes k}\otimes u^{\bullet\otimes k}) = u^{\bullet\otimes k}\otimes u^{\circ\otimes k}$ and it follows from this by a straightforward induction that
\begin{equation*}
S(u^{\circ^{k}\bullet^{k}}) = u^{\bullet^{k}\circ^{k}}.
\end{equation*}
As a consequence $S(\O(G_{N}(W^{(\circ, k)}))) = \O(G_{N}(W^{(\bullet, k)}))$ and the result follows from the fact that $S$ is an anti-isomorphism.

As for the second statement, it is enough to prove that if $u^{1}, \cdots, u^{p}$ are non-trivial irreducible representations of $\G(W^{(\circ, k)})$ and $v^{1}, \cdots, v^{p}$ are non-trivial irreducible representations of $\G(W^{(\bullet, k)})$, then $u^{1}\otimes v^{1}\otimes \dots\otimes u^{p}\otimes v^{p}$ is irreducible. To do this, consider the words $w$ and $w'$ corresponding to the representations $u^{1}$ and $v^{1}$ respectively. By definition, $w$ ends with $\bullet$ and $w'$ starts with $\bullet$. As a consequence,
\begin{equation*}
u^{1}\otimes v^{1} = u^{w}\otimes u^{w'} = u^{ww'}
\end{equation*}
is irreducible. One can then work by induction to deduce the result.
\end{proof}

We will now start by describing $\G_{N}(W^{(\circ, k)})$. The description will involve the use of the quantum automorphism group of $M_{N}(\C)$ with respect to its canonical trace, which we denote by $\QA$, see \cite{wang1998quantum, banica1999symmetries} for a definition. It will also involve the free wreath product with respect to that quantum group, the definition of which we recall (see \cite{fima2015free} for details and \cite{bichon2004free} for the original definition).

\begin{de}\label{de:freewreath}
Let $\G$ be a compact quantum group and let $A(\G)$ be the universal $*$-algebra generated by elements $a(\alpha)_{ij}$ where $\alpha\in \Irr(\G)$ and $1\leqslant i, j\leqslant N^{2}\dim(\alpha)^{2}$ such that
\begin{itemize}
\item Each matrix $a(\alpha)\in \mathcal{L}(M_{N}(\C))\otimes M_{\dim(\alpha)}(\O(\G))$ is unitary;
\item If $T\in \Mor(\alpha\otimes \beta, \gamma)$, then
\begin{equation*}
\left[(m\otimes T)\circ\Sigma_{23}\right](a(\alpha)\otimes a(\beta)) = a(\gamma)\left[(m\otimes T)\circ\Sigma_{23}\right],
\end{equation*}
where $m\in \mathcal{L}(M_{N}(\C)\otimes M_{N}(\C), M_{N}(\C))$ is the matrix multiplication and $\Sigma_{23}$ is the flip between the second and third tensors;
\item The map $\eta : \lambda\in \C\mapsto \lambda\id \in M_{N}(\C)$ satisfies
\begin{equation*}
a(\varepsilon_{\G})(\id_{M_{N}(\C)}\otimes 1) = \id_{M_{N}(\C)}\otimes 1.
\end{equation*}
\end{itemize}
\end{de}

It was proven in \cite[Prop 2.7 ]{fima2015free} that there is a natural CQG-algebra structure on $A(\G)$ which turns it into a compact quantum group called the \emph{free wreath product of $\G$ by $\QA$} and denoted by $\G\wr_{\ast}\QA$. We are now ready for the first main result of this subsection.

\begin{thm}\label{thm:iteratedwreathsymetric}
For any integer $k$, there is an isomorphism of CQG-algebras
\begin{equation*}
\G_{N}(W^{(\circ, k+1)})\cong \G_{N}(W^{(\circ, k)})\wr_{\ast}\QA
\end{equation*}
\end{thm}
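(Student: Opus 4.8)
The plan is to prove the isomorphism $\G_{N}(W^{(\circ, k+1)}) \cong \G_{N}(W^{(\circ, k)}) \wr_{\ast} \QA$ by producing a CQG-algebra isomorphism and verifying it matches the fusion-semi-ring isomorphism $\Psi$ from Proposition \ref{prop:fusionsymetric}. The natural strategy is to compare the two sides through their defining universal properties, using the Tannaka--Krein philosophy: both quantum groups are determined by their representation categories, so it suffices to exhibit a monoidal $*$-equivalence of categories that is compatible with the concrete realization inside $\Rep(U_{N}^{+})$. The semi-ring isomorphism $\Psi$ already tells us \emph{what the functor must do on objects}: the generator $u^{a(w)} = u^{\circ w \bullet}$ of $\G_{N}(W^{(\circ,k+1)})$ should correspond to the ``letter'' $w$ in the wreath-product combinatorics, and iterated tensor products $u^{a(w^{1})} \otimes \cdots \otimes u^{a(w^{n})}$ should correspond to words $w^{1}\cdots w^{n}$ on $\Irr(\G_{N}(W^{(\circ,k)}))$.

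First I would make the combinatorial decomposition from the proof of Proposition \ref{prop:fusionsymetric} into an explicit algebra map. Recall that every element of $W^{(\circ, k+1)}$ decomposes uniquely and maximally as $a(w^{1}) \cdots a(w^{n})$ with $w^{i} \in W^{(\circ, k)}$; this is precisely the data encoding a word on $\Irr(\G_{N}(W^{(\circ,k)}))$, which is what $A(\G)$ in Definition \ref{de:freewreath} is built from. The representation $u^{\circ \bullet} = u \otimes \overline{u}$ of $U_{N}^{+}$ restricted to $W^{(\circ,1)}$ is exactly the fundamental representation of $\QA = PU_{N}^{+}$ acting on $M_{N}(\C)$, which is why the bottom layer of the wreath tower is $\QA$. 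The map on algebras should send the generators $a(\alpha)_{ij}$ of $A(\G_{N}(W^{(\circ,k)}))$ to the corresponding coefficients of the representation $u^{\circ \alpha \bullet}$ sitting inside $\O(U_{N}^{+})$; one must check that this is well-defined, i.e. that the three defining relations of Definition \ref{de:freewreath} (unitarity, the intertwiner relation through the multiplication map $m$, and the fixed-vector relation through $\eta$) are indeed satisfied by these coefficients. The unitarity is immediate, and the $\eta$-relation encodes the fact that $\circ\bullet$ contains the trivial representation once; the intertwiner relation is where the multiplication map $m \colon M_{N}(\C) \otimes M_{N}(\C) \to M_{N}(\C)$ enters, and it corresponds combinatorially to the fusion $a(w) \otimes a(w') = a(w)a(w') + \cdots + \sum a(ab)$ computed at the end of Proposition \ref{prop:fusionsymetric}: the ``contraction'' term $\sum_{w=az, w'=\overline{z}b} a(ab)$ is precisely the output of applying $m$ in the middle.

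Once a CQG-algebra homomorphism is constructed in both directions, surjectivity and injectivity follow from Tannaka--Krein: $\Psi$ being a semi-ring isomorphism guarantees that the functor is essentially surjective and fully faithful on the level of irreducibles, and linear independence of the intertwiners $T_{p}$ (Proposition \ref{prop:linearindependence}, for $N \geqslant 4$; the small-$N$ cases can be handled separately or noted as exceptions) upgrades the bijection of irreducible objects to an equivalence of tensor categories. Concretely, I would argue that the map identifies $\Mor$-spaces on both sides, because the morphism spaces of $A(\G)$ are, by the universal construction of \cite{fima2015free}, spanned by the images of $m$ and the intertwiners of $\G$, and these match the partition intertwiners $T_{p}$ with $p \in \Proj_{\CC_{U}}$ equivalent to $p_{v}$ for $v \in W^{(\circ,k+1)}$.

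The hard part will be verifying the compatibility of the \emph{multiplicative} structure at the algebra level rather than merely at the fusion-ring level: the fusion-ring isomorphism $\Psi$ only records multiplicities, whereas the CQG-algebra isomorphism must respect the actual coproduct and the $*$-structure, and in particular must correctly match the concatenation product $a(w)a(w')$ with the wreath-product algebra generators while simultaneously realizing the contraction term via $m$. The subtlety is that the free wreath product's defining relations involve $m$ and $\Sigma_{23}$ in a specific way, and one has to check that the representation $u^{\circ\bullet}$ of $\QA$ on $M_{N}(\C)$ genuinely carries the algebra structure of $M_{N}(\C)$ (not just an abstract $N^{2}$-dimensional space), so that $m$ acts as honest matrix multiplication; this is exactly the content of the identification $PU_{N}^{+} \cong \QA$ and is where the trace on $M_{N}(\C)$ plays its role. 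I expect this verification to reduce to a careful but essentially bookkeeping-level check once the dictionary between the maximal word decomposition and the generators $a(\alpha)_{ij}$ is set up, with the genuine conceptual work already accomplished in Proposition \ref{prop:fusionsymetric}.
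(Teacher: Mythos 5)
Your proposal follows essentially the same route as the paper: send the wreath-product generators $a(\alpha)_{ij}$ to the coefficients of $u^{\circ\alpha\bullet}$ viewed on $M_{N}(\C)\otimes H_{\alpha}$, verify the three relations of Definition \ref{de:freewreath} to get a surjection by universality, and deduce injectivity from the fusion-semi-ring isomorphism $\Psi$ of Proposition \ref{prop:fusionsymetric} (the paper implements this last step via the Haar state rather than your Tannaka--Krein phrasing, and it does not need Proposition \ref{prop:linearindependence} or a homomorphism in the reverse direction, but these are inessential differences). The one piece you leave as a ``bookkeeping-level check'' --- the explicit verification of the intertwiner relation through $m$ and $\Sigma_{23}$ --- is precisely the computation the paper carries out in full, and your description of where it comes from is accurate.
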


\begin{proof}
Recall that for a word $w$ on $\{\circ, \bullet\}$, $u^{w}$ denotes a representative of the corresponding irreducible unitary representation of $U_{N}^{+}$ acting on a Hilbert space $H_{w}$. We will fix an orthonormal basis $(f_{i})_{1\leqslant i\leqslant d_{w}}$ of that space in the sequel. Let now $w\in W^{(\circ, k)}$ and set (the second equality follows from the fact that $w$ starts with $\circ$ and ends with $\bullet$)
\begin{equation*}
b(w) = u^{\circ w\bullet} = u\otimes u^{w}\otimes \overline{u}.
\end{equation*}
This is a representation of $\G_{N}(W^{(\circ, k+1)})$ acting on $H\otimes H_{w}\otimes \overline{H}$. In order to use the definition of the free wreath product, we will see this carrier space as $M_{N}(\C)\otimes H_{w} \cong H\otimes \overline{H}\otimes H_{w}$ through the map
\begin{equation*}
\Phi_{w} : e_{i}\otimes \xi\otimes \overline{e}_{j}\mapsto e_{i}\otimes \overline{e}_{j}\otimes \xi
\end{equation*}
and accordingly set $a(w) = \Phi_{w} b(w)\Phi_{w}^{-1}$. Our goal is to check that these matrices satisfy the defining relations of Definition \ref{de:freewreath}. Observe for a start that, because $\Phi_{w}$ is a unitary map between two Hilbert spaces (with the Hilbert structure on $M_{N}(\C)$ coming from the trace), $a(w)$ is a unitary matrix.

To check the second condition, let $w_{1}, w_{2}, w_{3}\in W^{(\circ, k)}$ and let $T\in \Mor(u^{w_{1}}\otimes u^{w_{2}}, u^{w_{3}})$. For computational convenience, we will see $a(w)$ as a map $M_{N}(\C)\otimes H_{w}\to M_{N}(\C)\otimes H_{w}\otimes \O(\G)$. Then, for any $1\leqslant i, j, k, l\leqslant N$, $1\leqslant a \leqslant d_{w_{1}}$ and $1\leqslant a \leqslant d_{w_{2}}$,
\begin{align*}
& \left(\left[(m\otimes T)\circ\Sigma_{23}\right]\otimes\id\right)\left(a(w_{1})\otimes a(w_{2})\right)\left(e_{i}\otimes \overline{e}_{j}\otimes f_{a}\otimes e_{k}\otimes \overline{e}_{l}\otimes f_{b}\right) \\
& = \left(\left[(m\otimes T)\circ\Sigma_{23}\right]\otimes\id\right)\circ(\Phi_{w_{1}}\otimes \Phi_{w_{2}}\otimes \id)\left(b(w_{1})\otimes b(w_{2})\right)\left(e_{i}\otimes f_{a}\otimes \overline{e}_{j}\otimes e_{k}\otimes f_{b}\otimes \overline{e}_{l}\right) \\
& = \left(\left[(m\otimes T)\circ\Sigma_{23}\right]\otimes\id\right)\left(\sum_{i', j', k', l' = 1}^{N}\sum_{a', b' = 1}^{d_{w_{3}}}\Phi_{w_{1}}(e_{i'}\otimes f_{a'}\otimes \overline{e}_{j'})\otimes \Phi_{w_{2}}(e_{k'}\otimes f_{b'}\otimes \overline{e}_{l'})\otimes u_{ii'}u^{w_{1}}_{aa'}u_{jj'}^{*}u_{kk'}u^{w_{2}}_{bb'}u_{ll'}^{*}\right) \\
& = \left(\left[(m\otimes T)\circ\Sigma_{23}\right]\otimes\id\right)\left(\sum_{i', j', k', l' = 1}^{N}\sum_{a', b' = 1}^{d_{w_{3}}}e_{i'}\otimes \overline{e}_{j'}\otimes f_{a'} \otimes e_{k'}\otimes \overline{e}_{l'}\otimes f_{b'}\otimes u_{ii'}u^{w_{1}}_{aa'}u_{jj'}^{*}u_{kk'}u^{w_{2}}_{bb'}u_{ll'}^{*}\right) \\
& = \sum_{i', j', k', l' = 1}^{N}\sum_{a', b' = 1}^{d_{w_{3}}}\left(\left[(m\otimes T)\circ\Sigma_{23}\right]\otimes\id\right)\left(e_{i'}\otimes \overline{e}_{j'}\otimes e_{k'}\otimes \overline{e}_{l'}\otimes f_{a'}\otimes f_{b'}\otimes u_{ii'}u^{w_{1}}_{aa'}u_{jj'}^{*}u_{kk'}u^{w_{2}}_{bb'}u_{ll'}^{*}\right) \\
& = \sum_{i', j', l' = 1}^{N}\sum_{a', b' = 1}^{d_{w_{3}}}\left(e_{i'}\otimes \overline{e}_{l'}\otimes T(f_{a'}\otimes f_{b'})\otimes u_{ii'}u^{w_{1}}_{aa'}u_{jj'}^{*}u_{kj'}u^{w_{2}}_{bb'}u_{ll'}^{*}\right) \\
& = \delta_{kj}\sum_{i', l' = 1}^{N}\sum_{a', b' = 1}^{d_{w_{3}}}\left(e_{i'}\otimes \overline{e}_{l'}\otimes T(f_{a'}\otimes f_{b'})\otimes u_{ii'}u^{w_{1}}_{aa'}u^{w_{2}}_{bb'}u_{ll'}^{*}\right) \\
& = \delta_{kj}\sum_{i', l' = 1}^{N}\sum_{a', b' = 1}^{d_{w_{3}}}\left(e_{i'}\otimes \overline{e}_{l'}\otimes T(f_{a'}\otimes f_{b'})\otimes u_{ii'}u^{w_{1}}_{aa'}u^{w_{2}}_{bb'}u_{ll'}^{*}\right) \\
& = \delta_{kj}(\Phi_{w_{3}}\otimes \id)(\id\otimes T\otimes \id)\left(u\otimes u^{w_{1}}\otimes u^{w_{2}}\otimes \overline{u}\right)\left(e_{i}\otimes f_{a}\otimes f_{b}\otimes \overline{e}_{j}\right),
\end{align*}
while
\begin{align*}
& a(w_{3})\left(\left[(m\otimes T)\circ\Sigma_{23}\right]\otimes\id\right)\left(e_{i}\otimes \overline{e}_{j}\otimes f_{a}\otimes e_{k}\otimes \overline{e}_{l}\otimes f_{b}\right) = \delta_{kj}a(w_{3})\left(e_{i}\otimes \overline{e}_{l'}\otimes T(f_{a}\otimes f_{b})\right) \\
& = \delta_{kj}(\Phi_{w_{3}}\otimes \id)(u\otimes u^{w_{3}}\otimes \overline{u})\left(e_{i}\otimes T(f_{a}\otimes f_{b})\otimes \overline{e}_{l}\right). \\
\end{align*}
It then follows from the fact that $T\in \Mor(u^{w_{1}}\otimes u^{w_{2}}, u^{w_{3}})$ that $(m\otimes T)\circ\Sigma_{23}$ is an intertwiner. Moreover, the trivial representation is given by the empty word, and $b(\emptyset) = u\otimes \overline{u}$ fixes the vector
\begin{equation*}
\sum_{i=1}^{n}e_{i}\otimes 1_{\C}\otimes \overline{e}_{i},
\end{equation*}
which under the map $\Phi_{\emptyset}$ becomes $1_{M_{N}(\C)}\otimes 1$, as required.

By the universal property, there is therefore a surjective morphism of CQG-algebras
\begin{equation*}
\pi_{k} : \O(\G_{N}(W^{(\circ, k)}\wr_{\ast}\QA) \to \O(\G_{N}(W^{(\circ, k+1)}).
\end{equation*}
Moreover, $\pi_{k}$ induces at the level of fusion rings the map $\Psi$ of Proposition \ref{prop:fusionsymetric}, which has been proven to be injective. It is a well-known fact that this implies that $\pi_{k}$ is injective, but let us briefly sketch the argument: given a coefficient $x$ of an irreducible non-trivial representation $\alpha$ of $\O(\G_{N}(W^{(\circ, k)}\wr_{\ast}\QA)$, its image is a coefficient of $\Psi(\alpha)$, which is non-trivial and irreducible. As a consequence, applying the Haar state to $\Psi(\alpha)$ yields $0$; because coefficients of irreducible representations span $\O(\G_{N}(W^{(\circ, k)}\wr_{\ast}\QA)$, we conclude that its Haar state equals the Haar state of $\G_{N}(W^{(\circ, k+1)})$ composed with $\pi_{k}$, and the result then follows from the faithfulness of the Haar states.
\end{proof}

Note that the same proof works for $W^{(\bullet, k)}$. A straightforward induction then yields an explicit description of these compact quantum groups which therefore turn out to be isomorphic.

\begin{cor}
Both compact quantum groups $\G_{N}(W^{(\circ, k)})$ and $\G_{N}(W^{(\bullet, k)})$ are isomorphic to the $k$-th iterated free wreath product of $\QA$ by itself. 
\end{cor}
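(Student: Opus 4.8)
The statement is a short induction on $k$ once Theorem \ref{thm:iteratedwreathsymetric} is available, so the plan is to pin down the base case, run the induction for the white series, and then transfer everything to the black series. I adopt the convention that the $1$-st iterated free wreath product of $\QA$ by itself is $\QA$, and that the $(k+1)$-th one is obtained from the $k$-th by applying $\,\cdot\,\wr_{\ast}\QA$.

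First I would treat the base case $k=1$. The set $W^{(\circ,1)}$ is generated by $\circ\bullet$, so $\O(\G_{N}(W^{(\circ,1)}))$ is the sub-$*$-algebra of $\O(U_{N}^{+})$ generated by the coefficients of $u^{\circ\bullet}=u\otimes\overline{u}$; this is exactly the projective version of $U_{N}^{+}$, which is isomorphic to $\QA$ by the identification already recalled in Section \ref{subsec:fusionrings}. The inductive step is then immediate: assuming $\G_{N}(W^{(\circ,k)})$ is the $k$-th iterate, Theorem \ref{thm:iteratedwreathsymetric} yields $\G_{N}(W^{(\circ,k+1)})\cong\G_{N}(W^{(\circ,k)})\wr_{\ast}\QA$, which is by definition the $(k+1)$-th iterate. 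This disposes of the white series for all finite $k\geqslant 1$.

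It remains to obtain the same description for the black series. There are two routes. One can repeat the argument verbatim with the $\bullet$-analogue of Theorem \ref{thm:iteratedwreathsymetric} (noted right after its proof), using $b(w)=u^{\bullet w\circ}$ and the base case $\G_{N}(W^{(\bullet,1)})\cong\QA$. I would instead favour a direct transfer: the conjugation automorphism $\phi$ of $\O(U_{N}^{+})$ determined by $\phi(u_{ij})=u_{ij}^{*}$ is an involutive CQG-algebra automorphism sending $u^{w}$ to $u^{\widetilde{w}}$, where $\widetilde{w}$ is the colour-flip of $w$. Since colour-flipping negates every partial colour balance $\cb(w_{\leqslant l})$, it exchanges $W^{(\circ,k)}$ and $W^{(\bullet,k)}$, so $\phi$ restricts to an isomorphism $\G_{N}(W^{(\circ,k)})\cong\G_{N}(W^{(\bullet,k)})$ and both are the $k$-th iterate.

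The one genuinely delicate point, and the place where care is needed, is that Proposition \ref{prop:antiisomorphic} only supplies an \emph{anti}-isomorphism between the two series; one must check that the black quantum group is isomorphic, not merely anti-isomorphic, to the same iterated free wreath product. The automorphism $\phi$ settles this cleanly since it is an honest homomorphism of Hopf $*$-algebras; alternatively one verifies that $\QA$ is isomorphic to its opposite --- it is of Kac type and the transpose on $M_{N}(\C)$ is a trace-preserving anti-automorphism --- which upgrades the anti-isomorphism of Proposition \ref{prop:antiisomorphic} to an isomorphism. Everything else is bookkeeping about unwinding the recursion.
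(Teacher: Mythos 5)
Your proof is correct, and for the white series it is exactly the paper's argument: base case $\G_{N}(W^{(\circ,1)})=PU_{N}^{+}\cong\QA$, then induction via Theorem \ref{thm:iteratedwreathsymetric}. The only divergence is in how you handle the black series. The paper simply observes that the proof of Theorem \ref{thm:iteratedwreathsymetric} goes through verbatim with $b(w)=u^{\bullet w\circ}$ and runs the same induction, so that the isomorphism between the two series is obtained only a posteriori, as a consequence of both being the $k$-th iterated wreath product. Your preferred route --- the colour-flip automorphism $\phi(u_{ij})=u_{ij}^{*}$ of $\O(U_{N}^{+})$, which is indeed a well-defined involutive CQG-algebra automorphism by universality and sends $u^{w}$ to $u^{\widetilde{w}}$, hence exchanges $W^{(\circ,k)}$ and $W^{(\bullet,k)}$ since the flip negates all partial colour balances --- is a genuinely different and arguably cleaner transfer: it avoids re-running the wreath-product computation and produces the isomorphism $\G_{N}(W^{(\circ,k)})\cong\G_{N}(W^{(\bullet,k)})$ directly rather than as a by-product. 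You are also right to flag that Proposition \ref{prop:antiisomorphic} only gives an anti-isomorphism and cannot be used as is; note however that your suggested ``upgrade'' via $\QA\cong\QA^{\mathrm{op}}$ is not quite sufficient as stated (one would need an anti-automorphism of $\G_{N}(W^{(\bullet,k)})$ itself, not just of $\QA$, to compose with), but since this is offered only as an aside and your main argument does not rely on it, the proof stands.
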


As for $\G_{N}(W^{(\circ, \infty)})$, it can be though of as an infinitely iterated free wreath product in the same way.

\subsubsection{The non-symetric case}

We conclude with the case of the compact quantum groups $\G(W^{(k)})$. We have not computed their fusion rings, but as we will see there is a direct way to describe them which does not require any prior knowledge of their representation theory. This will rely on two isomorphism results that we state separately for convenience. The first one concerns free wreath products with $\QA$ and is similar to what was used in the proof of Theorem \ref{thm:iteratedwreathsymetric}.

\begin{lem}\label{lem:wreath}
Let $(\G, v)$ be a compact matrix quantum group and let $\HH$ be a compact quantum group such that $\O(\HH)$ contains free copies of $\O(\G)$ and $\O(O_{N}^{+})$ with fundamental representation $u$. Then, the $*$-subalgebra $A\subset C(\HH)$ generated by the coefficients of $u\otimes v\otimes u$ is a CQG-algebra isomorphic to $\O(\G\wr_{\ast}\QA)$.
\end{lem}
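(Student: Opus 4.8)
The plan is to follow the blueprint of the proof of Theorem~\ref{thm:iteratedwreathsymetric}, the only structural change being that the outer factor is now the \emph{orthogonal} free quantum group, so that its fundamental representation satisfies $\overline{u}=u$; consequently the two copies of $u$ in $u\otimes v\otimes u$ play the roles of $u$ and $\overline{u}$ from that proof. Every coefficient of $u\otimes v\otimes u$ is a product of coefficients of $u$ and of $v$, hence lies in the subalgebra of $\O(\HH)$ generated by the two free copies, which is the free product $\O(\G)\ast\O(O_{N}^{+})$. Thus $A$ depends only on the free copies and not on the ambient $\HH$, and I may freely compute inside $\O(\G)\ast\O(O_{N}^{+})$. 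The goal is then to produce a surjection $\pi\colon\O(\G\wr_{\ast}\QA)\to A$ from the universal property of Definition~\ref{de:freewreath}, and to prove it injective by a fusion-ring argument combined with faithfulness of the Haar states.

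To build $\pi$, for each irreducible $\alpha$ of $\G$ realised on $H_{\alpha}$ I would consider the representation $u\otimes\alpha\otimes u$ of $\HH$ and first check its coefficients lie in $A$: since $u\otimes u$ contains the trivial representation via the $O_{N}^{+}$ duality vector, adjacent copies of $u$ can be contracted, exhibiting $u\otimes\alpha\otimes u$ as a subrepresentation of a tensor power of $u\otimes v\otimes u$ and of its conjugate $u\otimes\overline{v}\otimes u$ (the latter being accessible because $A$ is a $*$-algebra and $\overline{u}=u$). Reshuffling the carrier space $H\otimes H_{\alpha}\otimes H\cong H\otimes\overline{H}\otimes H_{\alpha}=M_{N}(\C)\otimes H_{\alpha}$ by a unitary $\Phi_{\alpha}$, with $M_{N}(\C)$ carrying the trace inner product, I set $a(\alpha)=\Phi_{\alpha}(u\otimes\alpha\otimes u)\Phi_{\alpha}^{-1}$. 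One then verifies the three relations of Definition~\ref{de:freewreath}: unitarity is immediate from that of $u\otimes\alpha\otimes u$ and of $\Phi_{\alpha}$; the intertwiner relation for $T\in\Mor(\alpha\otimes\beta,\gamma)$ is the same bookkeeping computation carried out in Theorem~\ref{thm:iteratedwreathsymetric}, with the matrix multiplication $m$ corresponding precisely to contracting the middle $u\otimes u$ against the $O_{N}^{+}$ duality map; and the fixed-vector condition holds because $\varepsilon_{\G}$ maps to $u\otimes u$, whose invariant vector $\sum_{i}e_{i}\otimes e_{i}$ becomes $\id_{M_{N}(\C)}$ under $\Phi_{\emptyset}$. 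The universal property yields $\pi$, which is surjective since $a(v)$ and $\overline{a(v)}$ generate $A$.

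For injectivity I would argue exactly as at the end of the proof of Theorem~\ref{thm:iteratedwreathsymetric}: it suffices to show $\pi$ is injective on fusion semi-rings, for then coefficients of distinct irreducibles are sent to coefficients of distinct irreducibles, the Haar state of $\O(\G\wr_{\ast}\QA)$ factors as that of $\HH$ precomposed with $\pi$, and faithfulness of the Haar states forces $\pi$ to be injective. By the fusion rules of the free wreath product from \cite{fima2015free} (matching the model $R^{+,\wr}(\G)$), the irreducibles of $\G\wr_{\ast}\QA$ are indexed by words $\alpha_{1}\cdots\alpha_{n}$ on the non-trivial irreducibles of $\G$, and the image under $\pi$ of such a word is the leading summand of $(u\otimes\alpha_{1}\otimes u)\otimes\cdots\otimes(u\otimes\alpha_{n}\otimes u)$. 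At each internal $u\otimes u$ junction one keeps the non-trivial summand of $u\otimes u$ in $O_{N}^{+}$, producing a reduced alternating word in non-trivial irreducibles of the two free factors $\G$ and $O_{N}^{+}$. By Wang's free product fusion rules \cite{wang1995free}, such a word is irreducible, and distinct tuples $(\alpha_{1},\dots,\alpha_{n})$ give distinct reduced words, hence inequivalent representations; this is precisely injectivity on fusion semi-rings.

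The main obstacle is this last step: one must match the free-wreath fusion semi-ring with the combinatorics of reduced alternating free-product words, that is, track how merging and contracting the adjacent copies of $u$ at the junctions of $u\otimes\alpha\otimes u$ and $u\otimes\beta\otimes u$ reproduce the three terms of the free-wreath product rule (the abstract counterpart of the explicit computation in Proposition~\ref{prop:fusionsymetric}). Once this bijection between free-wreath words and reduced free-product words is in place, irreducibility and distinctness follow from Wang's rules and the Haar-state argument concludes the proof.
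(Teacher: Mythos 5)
Your proposal is correct and follows essentially the same route as the paper, whose own proof of this lemma simply says that the argument of Theorem~\ref{thm:iteratedwreathsymetric} carries over once one notes $\overline{u}=u$ so that $a(\alpha)=u\otimes u^{\alpha}\otimes u$ acts on $\C^{N}\otimes H_{\alpha}\otimes\overline{\C}^{N}$. You spell out more explicitly than the paper does the fusion-semi-ring injectivity step via Wang's free-product fusion rules (the paper leans on Proposition~\ref{prop:fusionsymetric} in the special case and leaves the general case implicit), but this is the same argument, just more carefully documented.
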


\begin{proof}
For $\alpha\in \Irr(\G)$, set $a(\alpha) = u\otimes u^{\alpha}\otimes u$. Then, the proof of Theorem \ref{thm:iteratedwreathsymetric} goes along, noticing that because $u = \overline{u}$, we may see $a(\alpha)$ as a representation acting on $\C^{N}\otimes H_{\alpha}\otimes \overline{\C}^{N}$.
\end{proof}

Our second lemma concerns an alternative definition of the free complexification operation, which we first recall. Let us denote by $\mathbb{T}$ the compact group of complex numbers of modulus one, and by $z$ its fundamental representation given by the identity map. Given a compact matrix quantum group $(\G, v)$, its \emph{free complexification} is the quantum group $(\widetilde{\G}, vz)$, where $\O(\widetilde{\G})\subset \O(\G)\ast\O(\mathbb{T})$ is the $*$-subalgebra generated by the coefficients of the matrix $vz$.

\begin{lem}\label{lem:complexification}
Let $(\G, v)$ be a compact matrix quantum and let $\HH$ be a compact quantum group such that $\O(\HH)$ contains free copies of $\O(\G)$ and $\O(\mathbb{T})$ with fundamental representation $z$. Then, the $*$-subalgebra of $\O(\HH)$ generated by the coefficients of $zvz$ is a CQG-algebra isomorphic to $\O(\widetilde{\G})$.
\end{lem}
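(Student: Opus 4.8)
The plan is to realise the claimed isomorphism as the restriction of a single explicit Hopf $*$-endomorphism of the free product $\O(\G)\ast\O(\mathbb{T})$. Since $\O(\HH)$ contains free copies of $\O(\G)$ and $\O(\mathbb{T})$, we may and will compute everything inside $\O(\G)\ast\O(\mathbb{T})\subset\O(\HH)$; there $\O(\widetilde{\G})$ is by definition the $*$-subalgebra $\langle vz\rangle$ generated by the coefficients of $vz$, while the algebra in the statement is $C=\langle zvz\rangle$. First I would define, using the universal property of the free product, the unital $*$-homomorphism $\Psi\colon\O(\G)\ast\O(\mathbb{T})\to\O(\G)\ast\O(\mathbb{T})$ determined on the two factors by $\Psi(z)=z^{2}$ and $\Psi(v_{ij})=zv_{ij}z^{*}$. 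Each factor map is a unital $*$-homomorphism (note that $z^{2}$ is unitary and that conjugation by $z$ is a $*$-automorphism of the free product), so $\Psi$ is well defined, and a check on generators using $\D(z)=z\otimes z$ shows that it is a morphism of Hopf $*$-algebras. A one-line computation then gives $\Psi(v_{ij}z)=zv_{ij}z^{*}z^{2}=zv_{ij}z$, so $\Psi$ maps $\O(\widetilde{\G})=\langle vz\rangle$ onto $C=\langle zvz\rangle$; being a Hopf $*$-morphism, it sends the Hopf subalgebra $\O(\widetilde{\G})$ onto $C$ with the comultiplication inherited from $\O(\HH)$.

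It then remains to prove injectivity, which is the real content and the main obstacle: one cannot argue that $\Psi$ is an automorphism, since $C$ is a proper subalgebra of the free product (for instance $\Psi$ doubles the natural $\Z$-grading for which $\deg z=1$ and $\deg\O(\G)=0$, so $C$ lies in even degrees only). The clean way around this is to factor $\Psi=\theta\circ\Xi$, where $\theta(x)=zxz^{*}$ is conjugation by $z$ and $\Xi$ is the Hopf $*$-endomorphism fixing $\O(\G)$ pointwise and sending $z\mapsto z^{2}$. I would then show that each factor preserves the Haar state $h=h_{\G}\ast h_{\mathbb{T}}$ of the free product: $h\circ\theta=h$ holds because $\theta$ is a Hopf automorphism, so $h\circ\theta$ is again a bi-invariant state and must equal $h$ by uniqueness; and $h\circ\Xi=h$ follows because $z\mapsto z^{2}$ preserves $h_{\mathbb{T}}$ (as $h_{\mathbb{T}}(z^{2n})=\delta_{n,0}=h_{\mathbb{T}}(z^{n})$), so $\Xi=\id_{\O(\G)}\ast(z\mapsto z^{2})$ maps centered elements to centered elements and hence reduced words to reduced words, on which $h$ vanishes. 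Combining the two gives $h\circ\Psi=h$.

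Finally, since the Haar state of a CQG-algebra is faithful on the underlying Hopf $*$-algebra, the identity $h(\Psi(x)^{*}\Psi(x))=h(\Psi(x^{*}x))=h(x^{*}x)$ forces $\Psi$ to be injective, in particular on $\O(\widetilde{\G})$; this is the same faithfulness mechanism used in the proof of Theorem \ref{thm:iteratedwreathsymetric}. Together with the surjectivity onto $C$ established above, $\Psi$ restricts to a bijective morphism of Hopf $*$-algebras $\O(\widetilde{\G})\to C$, that is, a CQG-algebra isomorphism, which is exactly the assertion. In a full write-up I expect the only delicate points to be the verification that $\Xi$ respects reduced words (hence the invariance $h\circ\Xi=h$) and the remark that the hypothesis of free copies yields a genuine embedding of $\O(\G)\ast\O(\mathbb{T})$ into $\O(\HH)$ inside which these computations take place.
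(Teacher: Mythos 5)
Your proposal is correct and follows essentially the same route as the paper: the two ingredients of your factorisation $\Psi=\theta\circ\Xi$ are exactly the paper's two remarks, namely that conjugation by the group-like unitary $z$ is a CQG-algebra automorphism carrying $\langle zvz\rangle$ to $\langle vz^{2}\rangle$, and that replacing $z$ by $z^{2}$ is harmless. The only difference is cosmetic: where you prove injectivity of $\Xi$ via preservation of the free-product Haar state, the paper simply observes that $z^{2}$ generates a free copy of $\O(\widehat{2\Z})\cong\O(\mathbb{T})$, so that $\langle vz^{2}\rangle$ is the free complexification by definition.
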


\begin{proof}
The proof relies on two remarks. First, if one considers the $*$-subalgebra generated by $uz^{2}$, then the same conclusion holds. Indeed, $z^{2}$ can be seen as the fundamental representation of $\O(\widehat{2\Z})\subset \O(\widehat{\Z}) = \O(\mathbb{T})$ and since $2\Z\cong \Z$ we are in the situation of the usual definition of the free complexification. Second, we claim that the map $x\mapsto \overline{z}xz$ is a CQG-algebra automorphism of $C(\HH)$. The fact that it is a $*$-automorphism is straightforward since it is the conjugation by a unitary. As for compatibility with the coproducts, this follows from the fact that $z$ is group-like, i.e. $\D(z) = z\otimes z$.

Summing up, if $A$ is the $*$-subalgebra generated by the coefficients of $zvz$, then it is a CQG-algebra isomorphic to the one generated by the coefficients of $\overline{z}(zvz)z = vz^{2}$, hence to the free complexification of $\G$.
\end{proof}

We are now ready for our second main result.

\begin{thm}\label{thm:descriptionquantumsubgroups}
We have $\G^{(1)} = U_{N}^{+}$, $\G^{(2)} \cong \left(\widehat{\Z}\wr_{\ast}\QA\right)^{\sim}$ and, for $k\geqslant 3$,
\begin{equation*}
\G^{(k)}\cong \left(\widetilde{\G}^{(k-2)}\wr_{\ast}\QA\right)^{\sim}.
\end{equation*}
\end{thm}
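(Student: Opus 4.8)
The plan is to induct on $k$ in steps of two, using the realization $U_N^+ = \widetilde{O_N^+}$ together with Lemmas \ref{lem:wreath} and \ref{lem:complexification} as the engine. First I would record that $\G^{(k)} = \G_N(W^{(k)})$ is the dual quantum subgroup generated by $u^{\otimes k}$: indeed $W^{(k)} = \langle \circ^{k}\rangle$ by Proposition \ref{prop:wk}, and a short fusion computation (the tensor product of all-white words stays irreducible) shows that the word $\circ^{k}$ labels the irreducible $u^{\otimes k}$. For $k=1$ every word has color balance divisible by $1$, so $W^{(1)}$ is the set of all words and $\G^{(1)} = U_N^+$; this settles the first assertion and gives the base case for odd $k$. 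The even base case $k=2$ will appear as the degenerate instance of the inductive mechanism, with $\G^{(0)}$ the trivial group, so that $\widetilde{\G}^{(0)}$ is the free complexification of the trivial group, namely $\widehat{\Z} = \mathbb{T}$.

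For the inductive step (assume $k\geq 3$ and that $\G^{(k-2)}$ has fundamental representation $u^{\otimes(k-2)}$), I would realize the candidate $\bigl(\widetilde{\G}^{(k-2)}\wr_{\ast}\QA\bigr)^\sim$ concretely inside a free product, peeling the three operations from the inside out. Starting from a free product containing free copies of $\O(\G^{(k-2)})$ and a circle $\O(\mathbb{T})$ with generator $z_1$, Lemma \ref{lem:complexification} identifies the algebra generated by $z_1\, u^{\otimes(k-2)}\, z_1$ with $\O(\widetilde{\G}^{(k-2)})$. Adjoining a free copy of $\O(O_N^+)$ with fundamental representation $u_1$ and invoking Lemma \ref{lem:wreath}, the algebra generated by $u_1\otimes(z_1 u^{\otimes(k-2)} z_1)\otimes u_1$ is $\O(\widetilde{\G}^{(k-2)}\wr_{\ast}\QA)$. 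Finally, adjoining a second free circle with generator $z_2$ and applying Lemma \ref{lem:complexification} once more, the algebra generated by
\begin{equation*}
X \;=\; z_2\,\bigl[u_1\otimes (z_1\, u^{\otimes(k-2)}\, z_1)\otimes u_1\bigr]\,z_2
\end{equation*}
is $\O\bigl((\widetilde{\G}^{(k-2)}\wr_{\ast}\QA)^\sim\bigr)$. For $k=2$ the middle factor $u^{\otimes 0}$ is trivial, so the inner complexification degenerates to $\widehat{\Z}$ and the generator becomes $z_2\,(u_1\, z\, u_1)\,z_2$, which is precisely the generating representation of $(\widehat{\Z}\wr_{\ast}\QA)^\sim$.

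It then remains to match $X$ with the fundamental representation $u^{\otimes k} = u\otimes u^{\otimes(k-2)}\otimes u$ of $\G^{(k)}$. Writing $U_N^+ = \widetilde{O_N^+}$ with $u = vz$ ($v$ the orthogonal fundamental representation, $z$ the free circle), I would construct, via the universal properties underlying Lemmas \ref{lem:wreath} and \ref{lem:complexification}, a surjective morphism of CQG-algebras from $\O\bigl((\widetilde{\G}^{(k-2)}\wr_{\ast}\QA)^\sim\bigr)$ onto $\O(\G^{(k)})$ sending $X$ to $u^{\otimes k}$, exactly as the morphism $\pi_k$ was built in the proof of Theorem \ref{thm:iteratedwreathsymetric}. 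Injectivity would then follow, along the same lines, by checking that this morphism induces an \emph{isomorphism} of fusion semi-rings, the combinatorial core being a bijection between reduced words on the generators of the iterated construction and the words of $W^{(k)}$, in the spirit of Proposition \ref{prop:fusionsymetric}; faithfulness of the Haar states finally upgrades the surjection to an isomorphism.

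The main obstacle is precisely this identification of $X$ with $u^{\otimes k}$: the iterated construction carries two independent free circles $z_1, z_2$ and an auxiliary free $O_N^+$, whereas $\G^{(k)}\subset U_N^+$ sees only the single $\Z$-grading given by the color balance $\cb$. The delicate point is to verify that, after the conjugation automorphisms $x\mapsto \overline{z}xz$ used in Lemma \ref{lem:complexification}, these gradings consolidate so that $X$ and $u^{\otimes k}$ generate isomorphic CQG-algebras, and that $X$ really generates all of $\G^{(k)}$ rather than a proper dual quantum subgroup. Making the fusion-semi-ring isomorphism rigorous — in particular the uniqueness of the maximal decomposition of a word of $W^{(k)}$ into blocks of the form $\circ(\cdots)\bullet$ — is where the genuine work lies, the rest being formal applications of the two lemmas and of the Haar-state argument already used for Theorem \ref{thm:iteratedwreathsymetric}.
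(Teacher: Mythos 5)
Your skeleton coincides with the paper's: peel $\circ^{k}$ into an outer free complexification of a free wreath product by $\QA$ whose middle factor is an inner free complexification of $\G^{(k-2)}$, with Lemmas \ref{lem:wreath} and \ref{lem:complexification} as the engine. But the execution diverges at the decisive point, and the divergence opens a real gap. The paper never leaves the single free product $\O(O_{N}^{+})\ast\O(\mathbb{T})$: writing the fundamental representation of $U_{N}^{+}$ as $uz$, the entire proof is the word identity $(uz)^{k}=\bigl(u\bigl[(zu)^{k-2}z\bigr]u\bigr)z$ together with the fact that Lemmas \ref{lem:wreath} and \ref{lem:complexification} are \emph{isomorphism} statements: they identify the subalgebra generated by $u\,v\,u$ (resp.\ $zvz$) with the free wreath product (resp.\ the free complexification) as soon as the pieces sit freely inside the ambient CQG-algebra. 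In particular, Lemma \ref{lem:complexification} exists precisely so that one may reuse the \emph{same} circle $z$ for the inner and outer complexifications; your setup with two independent circles $z_{1},z_{2}$ in an external free product, followed by a ``consolidation of gradings'', manufactures a difficulty that the lemma is designed to remove.

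The genuine gap is your injectivity step. You propose to upgrade the surjection to an isomorphism by showing it induces an isomorphism of fusion semi-rings ``in the spirit of Proposition \ref{prop:fusionsymetric}''. That would require computing both the fusion semi-ring of $\G^{(k)}$ (via a canonical block decomposition of the words of $W^{(k)}$, analogous to but different from the one for $W^{(\circ,k)}$) and that of $\bigl(\widetilde{\G}^{(k-2)}\wr_{\ast}\QA\bigr)^{\sim}$; neither is carried out, and the paper explicitly states that it does \emph{not} compute these fusion rings --- the whole point of its argument is that the two recognition lemmas make such a computation unnecessary. What actually needs verifying is the freeness hypotheses of the lemmas inside $\O(O_{N}^{+})\ast\O(\mathbb{T})$ (that $z$ is free from the subalgebra generated by $(uz)^{k-2}$, and that the copy of $\O(O_{N}^{+})$ is free from the inner complexification); once that is done the isomorphism falls out directly and no surjectivity/injectivity dichotomy arises. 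As written, your proof defers its hardest step to an unproven combinatorial bijection, so it does not yet establish the theorem.
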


\begin{proof}
The first equality follows by definition. For the second one, observe that $\O(\G^{(2)})$ is generated by the coefficients of the representation $uzuz = (uzu)z$ in $\O(\O_{N}^{+}\ast \O(\mathbb{T}))$. It is therefore the free complexification of the quantum group corresponding to the CQG-algebra generated by the coefficients of $uzu$. But since $\O(\mathbb{T})$ and $\O(O_{N}^{+})$ are free inside $\O(U_{N}^{+})$ by definition, this is $\widehat{\Z}\wr_{\ast}\QA$ by Lemma \ref{lem:wreath}, hence the result.

If now $k\geqslant 3$, we observe that $\O(\G^{(k)})$ is generated by the coefficients of
\begin{equation*}
(uz)^{k} = \left(u\left[(zu)^{k-2}z\right]u\right)z = (uvu)z.
\end{equation*}
Moreover, $v = (zu)^{k-2}z = z(uz)^{k-2}z$ so that the CQG-algebra generated by its coefficients is isomorphic to $\O(\overline{\G^{(k-2)}})$ by Lemma \ref{lem:complexification} and we conclude again by Lemma \ref{lem:wreath} and Lemma \ref{lem:complexification}.
\end{proof}

\section{Connection to quantum graphs and quantum trees}\label{sec:quantumtrees}

Several definitions of quantum graphs have been given about ten years ago in the context of quantum information theory, amongst others see \cite{duan2012zero, weaver2012, musto2018} and then \cite{chirvasitu2022, wasilewski2023quantum, daws2024} for a comparison of the definitions. The literature on quantum graphs grows rapidly, see for instance \cite{brannan2019bigalois, brannan2022, gromada2021, matsuda2021,
weaver2021} in addition to the previously mentioned articles.

It was recently proven in \cite{brownlowe2023self} (see also \cite{de2023quantum, meunier2023quantum}) that the iterated free wreath product of $S_{N}^{+}$ with itself $k$ times can be identified with the quantum automorphism group of a rooted $N$-regular tree with depth $k$. Based on this, one may wonder whether the quantum groups $\G_{N}(W^{\circ, k})$ can be seen as quantum automorphism groups of regular quantum graphs. The base case $k = 1$ corresponds to the quantum automorphism group $\QA$ of $M_{N}(\C)$, hence it is natural to expect that we have to build a graph using the latter C*-algebra as a building block. However, we will start with a more general construction because it somehow makes the proofs simpler.

\subsection{Definition of quantum regular trees}

We now give a definition of quantum regular trees.
Let $(B, \psi)$ be a finite-dimensional C*-algebra equipped with a $\delta$-form $\psi$. Setting $B^{\otimes 0} = \C$, we will take
\begin{equation*}
B_{k} = \bigoplus_{i=0}^{k}B^{\otimes i}
\end{equation*}
as the non-commutative space underlying our quantum graph. As for the state, we would like to take the sum of the states $\psi^{\otimes i}$, but this will not be a $\delta$-form since $\psi^{\otimes i}$ is a $\delta^{i}$-form. Therefore, we need to renormalize them and set
\begin{equation*}
\delta_{k} = \sum_{i=0}^{k}\delta^{i} \text{ and } \psi_{k} = \frac{1}{\delta_{k}}\sum_{i=0}^{k}\delta^{i}\psi^{\otimes i},
\end{equation*}
with $\psi^{\otimes 0}$ being the identity on $\C$.

\begin{de}
For $0\leqslant i\leqslant k-1$, we define a linear map $A_{i} : B^{\otimes i}\to B^{\otimes i+1}$ by
\begin{equation*}
A_{i} : x\mapsto x\otimes 1.
\end{equation*}
We then set
\begin{equation*}
\mathcal{A}_{k} = \mathrm{Id} + \sum_{i=0}^{k-1} A_{i}.
\end{equation*}
\end{de}

\begin{lem}\label{lem:quantumregular}
The triple $\mathcal{G}_{k} = (B_{k}, \psi_{k}, \mathcal{A}_{k})$ is a quantum graph in the sense of \cite{wasilewski2023quantum}, i.e. $\mathcal{A}_{k}$ is completely positive and the following relation holds :
\begin{equation*}
m\circ(\mathcal{A}_{k}\otimes \mathcal{A}_{k})\circ m^{*} = \delta_{k}^{2}\mathcal{A}_{k}.
\end{equation*}
Moreover, if $B = \C^{N}$, then $\mathcal{G}_{k}$ is the rooted $N$-regular tree with depth $k$.
\end{lem}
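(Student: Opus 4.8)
The plan is to verify the two defining conditions of a quantum graph in the sense of \cite{wasilewski2023quantum} for the triple $\mathcal{G}_{k} = (B_{k}, \psi_{k}, \mathcal{A}_{k})$, and then to identify $\mathcal{G}_{k}$ with the rooted $N$-regular tree when $B = \C^{N}$. The key observation throughout is that $\mathcal{A}_{k} = \mathrm{Id} + \sum_{i=0}^{k-1} A_{i}$ is a sum of the identity on each $B^{\otimes i}$ together with the ``right multiplication by $1$'' maps $A_{i} : x \mapsto x \otimes 1$, and these summands act between different graded pieces $B^{\otimes i}$ of $B_{k}$. First I would fix notation for the multiplication $m$ and its adjoint $m^{*}$ on $B_{k}$ with respect to the renormalized state $\psi_{k}$, being careful that the renormalization by the weights $\delta^{i}/\delta_{k}$ enters the computation of $m^{*}$.

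For complete positivity of $\mathcal{A}_{k}$, I would argue that each $A_{i}$ is itself completely positive: the map $x \mapsto x \otimes 1$ is a unital $*$-homomorphism composed with the inclusion, hence completely positive, and the identity is trivially completely positive. Since the graded pieces are orthogonal and $\mathcal{A}_{k}$ is a direct sum (as a linear map) of these completely positive pieces, $\mathcal{A}_{k}$ is completely positive. The more computational part is the quadratic relation
\begin{equation*}
m\circ(\mathcal{A}_{k}\otimes \mathcal{A}_{k})\circ m^{*} = \delta_{k}^{2}\mathcal{A}_{k}.
\end{equation*}
Here I would expand $\mathcal{A}_{k} \otimes \mathcal{A}_{k}$ into cross terms $A_{i} \otimes A_{j}$, $\mathrm{Id} \otimes A_{j}$, etc., and track how $m \circ (\,\cdot\,) \circ m^{*}$ acts degree by degree. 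The factor $\delta_{k}^{2}$ on the right should emerge from the two independent normalization factors $1/\delta_{k}$ hidden in the two copies of $m^{*}$ (via $\psi_{k}$), mirroring the standard $\delta$-form computation $m \circ m^{*} = \delta^{2}\,\mathrm{Id}$ for $(B, \psi)$, suitably bookkept across the grading.

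\emph{The main obstacle} I anticipate is precisely this degree-bookkeeping in the quadratic relation: one must check that the off-diagonal contributions of $\mathcal{A}_{k} \otimes \mathcal{A}_{k}$ collapse correctly onto the single operator $\mathcal{A}_{k}$ with the right scalar, rather than producing spurious terms landing in the wrong graded component. The delicate point is how $m^{*}$ splits a tensor in $B^{\otimes i}$ into pairs summing to degree $i$ across the two factors, and how the multiplication $m$ then recombines them after $\mathcal{A}_{k} \otimes \mathcal{A}_{k}$ shifts degrees by $0$ or $1$; the normalization weights $\delta^{i}/\delta_{k}$ must conspire so that the net effect is multiplication by $\delta_{k}^{2}$ on each piece. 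I would organize this by reducing to the single $\delta$-form identity for $(B,\psi)$ and handling the unit $1 \in B$ separately, since $A_{i}$ introduces a distinguished tensor factor equal to $1$.

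Finally, for the identification when $B = \C^{N}$, I would note that $\C^{N}$ is commutative, so $B_{k} = \bigoplus_{i=0}^{k} (\C^{N})^{\otimes i}$ is the function algebra on the disjoint union of vertex sets of cardinalities $N^{i}$, which are exactly the levels $0, 1, \dots, k$ of the rooted $N$-regular tree of depth $k$ (the single root at level $0$, its $N$ children, and so on). Under this identification the map $A_{i} : x \mapsto x \otimes 1$ is the operator recording, on the classical adjacency matrix, the edges joining a vertex at level $i$ to its children at level $i+1$, while the $\mathrm{Id}$ summand corresponds to the self-loop/identity convention used in \cite{wasilewski2023quantum}. Thus $\mathcal{A}_{k}$ is the adjacency operator of the rooted $N$-regular tree of depth $k$, completing the proof; here the verification is essentially a translation of the classical combinatorics and should present no difficulty once the commutative case is spelled out.
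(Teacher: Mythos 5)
Your overall strategy matches the paper's: complete positivity of $\mathcal{A}_{k}$ is checked summand by summand (each $A_{i}$ being essentially a unital $*$-homomorphism), the quadratic relation is verified degree by degree by expanding $\mathcal{A}_{k}\otimes\mathcal{A}_{k}$ and reducing to the $\delta$-form identity for $(B,\psi)$, and the case $B=\C^{N}$ is a direct translation into classical combinatorics. However, the step you single out as ``the main obstacle'' rests on a wrong premise. You describe $m^{*}$ as splitting an element of $B^{\otimes i}$ ``into pairs summing to degree $i$ across the two factors'', i.e.\ into $\bigoplus_{a+b=i}B^{\otimes a}\otimes B^{\otimes b}$. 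That would be the behaviour of a graded algebra whose product adds degrees, but $B_{k}=\bigoplus_{i=0}^{k}B^{\otimes i}$ is a \emph{direct sum of algebras}: the product of $B^{\otimes i}$ with $B^{\otimes j}$ vanishes for $i\neq j$. Consequently $m^{*}(B^{\otimes i})\subset B^{\otimes i}\otimes B^{\otimes i}$, and this single observation is precisely what makes the computation collapse: on the $i$-th diagonal block, among the terms of $\mathcal{A}_{k}\otimes\mathcal{A}_{k}$, the mixed ones $\mathrm{Id}\otimes A_{i}$ and $A_{i}\otimes\mathrm{Id}$ land in $B^{\otimes i}\otimes B^{\otimes(i+1)}$ and $B^{\otimes(i+1)}\otimes B^{\otimes i}$, hence are killed by $m$; only $\mathrm{Id}\otimes\mathrm{Id}$ and $A_{i}\otimes A_{i}$ survive, each reducing to $m\circ m^{*}$ (the latter followed by $-\otimes 1$), which yields $\delta_{k}^{2}(\mathrm{Id}+A_{i})$ on that block. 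Without this observation, the bookkeeping you propose would be chasing spurious off-diagonal contributions that do not actually exist, so the missing idea is exactly the remark $m(B^{\otimes i}\otimes B^{\otimes j})=0$ for $i\neq j$ with which the paper opens its computation.

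A smaller point: there is only one $m^{*}$ in the identity, so the factor $\delta_{k}^{2}$ cannot come from ``two independent normalization factors $1/\delta_{k}$ hidden in the two copies of $m^{*}$''; it comes from the single relation $m\circ m^{*}=\delta_{k}^{2}\,\mathrm{Id}$ on $B_{k}$, and your instinct that the weights $\delta^{i}/\delta_{k}$ in $\psi_{k}$ must be tracked when computing the adjoint on each summand is the one genuine delicacy here (the paper asserts $\sum x_{(1)}x_{(2)}=\delta_{k}^{2}x$ without spelling this out). Your identification in the classical case $B=\C^{N}$, including the remark that the $\mathrm{Id}$ summand reflects the reflexivity convention of the quantum graph formalism, is correct and agrees with the paper.
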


\begin{proof}
It is clear that $A_{i}$ is completely positive for all $i$, hence $\mathcal{A}_{k}$ also is. Moreover, observe that $m(B^{\otimes i}\otimes B^{\otimes i'}) = 0$ if $i\neq i'$. Therefore, the range of $m^{*}$ is contained in 
\begin{equation*}
\bigoplus_{i=1}^{k}B^{\otimes i}\otimes B^{\otimes i}.
\end{equation*}
On the $i$-th summand, $\mathcal{A}_{k}\otimes \mathcal{A}_{k}$ is the sum of the three operators $\mathrm{Id}\otimes A_{i}$, $A_{i}\otimes \mathrm{Id}$ and $A_{i}\otimes A_{i}$. The first two have range contained in the kernel of $m$, hence we can focus on the last one. We compute
\begin{align*}
m\circ (A_{i}\otimes A_{i})\circ m^{*}(x) & = \sum m\left(A_{i}(x_{1})\otimes A_{i}(x_{2})\right) \\
& = \sum m\left(x_{1}\otimes 1\otimes x_{2}\otimes 1\right) \\
& = \sum x_{1}x_{2}\otimes 1 \\
& = \delta_{k}^{2}x\otimes 1 \\
& = \delta_{k}^{2}A_{i}(x).
\end{align*}

As for the last point of the statement, let us label the vertices of $\mathcal{G}_{k}$ by tuples $(i_{1}, \cdots, i_{p})$ corresponding to the basis vector $e_{i_{1}}\otimes \cdots\otimes e_{i_{p}}$ of $B^{\otimes p}$. Recalling that the unit of $\C^{N}$ is the all-one vector
\begin{equation*}
\xi = \sum_{i=1}^{N}e_{i},
\end{equation*}
we see that the $A_{i}$ component in the adjacency matrix means that the vertex $(i_{1}, \cdots, i_{p})$ is connected to the vertices $(i_{1}, \cdots, i_{p}, i)$ for all $1\leqslant i\leqslant N$. Moreover, the vertex corresponding to $B^{\otimes 0}$ is also connected to all the vertices labelled by $(i)$ for $1\leqslant i\leqslant N$. As a consequence, the corresponding graph is the rooted $N$-regular tree with depth $k$.
\end{proof}

The previous result makes it reasonable to call $\mathcal{G}_{k}$ the rooted $(B, \psi)$-regular quantum tree of depth $k$. For clarity, let us state this as a definition in the case which is of interest to us.

\begin{de}
The \emph{rooted tracial $M_{N}(\C)$-regular quantum tree of depth $k$} is the quantum graph given by $\mathcal{G}_{k} = (M_{N}(\C)^{\otimes k}, \mathrm{tr}_{k}, A_{\sigma_{k}})$.
\end{de}

The important observation is that $\G_{N}(W^{(\circ, k)})$ acts on $\mathcal{G}_{k}$. Indeed, let us consider first the quantum group $\G = U_{N}^{+}$ with its fundamental representation $V$. It acts on $M_{N}(\C)^{\otimes i}$ by conjugation, i.e. through the map
\begin{equation*}
\alpha_{i} : T\in M_{N}(\C)^{\otimes i}\mapsto V^{\otimes i}(1\otimes T)V^{\ast\otimes i}\in \O(\G)\otimes M_{N}(\C)^{\otimes i}.
\end{equation*}
This is trace-perserving because $V$ is unitary. Moreover, the first tensorand of $\alpha_{i}(T)$ is a product of $i$ coefficients of $V$ followed by a product of $i$ coefficients of $V^{*}$, hence it lies in $\O(\G_{N}(W^{\circ, i}))\subset \O(\G_{N}(W^{\circ, k}))$. Therefore, there is a unique action $\alpha : B_{k}\to \O(\G_{N}(W^{\circ, k}))\otimes B_{k}$ which coincides with $\alpha_{i}$ on the $i$-th component (and is the trivial action on $\C$). Let us now check the compatibility with the adjacency matrix. We have
\begin{align*}
\alpha_{i+1}\circ A_{i}(T) & = \alpha_{i}(T\otimes 1_{M_{N}(\C)}) \\
& = V^{\otimes (i+1)}(1\otimes T\otimes 1_{M_{N}(\C)})V^{\ast\otimes (i+1)} \\
& = \left[V^{\otimes i}(1\otimes T)V^{\ast i}\right]\otimes 1 \\
& = (\id\otimes A_{i})\left(V^{\otimes i}(1\otimes T)V^{\ast\otimes i}\right)\\
& = (\id\otimes A_{i})\circ\alpha_{i}(T)
\end{align*}
hence $\G_{N}(W^{(\circ, k)})$ acts on $\mathcal{G}_{k}$.

We conclude with a converse to this computation.

\begin{thm}\label{thm:quantumtrees}
The compact quantum group $\G_{N}(W^{(\circ, k)})$ is the quantum automorphism group of the rooted $M_{N}(\C)$-regular quantum tree of depth $k$.
\end{thm}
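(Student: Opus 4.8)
The plan is to upgrade the coaction constructed just above into an identification of quantum groups. Since the coaction $\alpha$ realizes $\G_{N}(W^{(\circ, k)})$ as a quantum subgroup of $\QAut(\mathcal{G}_{k})$, the universal property of the quantum automorphism group furnishes a surjective morphism of CQG-algebras
\[
\pi_{k} : \O(\QAut(\mathcal{G}_{k})) \to \O(\G_{N}(W^{(\circ, k)})).
\]
Everything then reduces to proving that $\pi_{k}$ is injective. I would run an induction on $k$ whose engine is the free wreath product recursion
\[
\G_{N}(W^{(\circ, k+1)}) \cong \G_{N}(W^{(\circ, k)}) \wr_{\ast} \QA
\]
of Theorem \ref{thm:iteratedwreathsymetric}, matched against a parallel recursion for the quantum automorphism groups of the trees.

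For the base case $k = 1$, the underlying algebra is $B_{1} = \C \oplus M_{N}(\C)$; any quantum automorphism commutes with $\mathcal{A}_{1}$ and hence preserves the grading, and its restriction to $M_{N}(\C)$ is a trace-preserving coaction, so it factors through $\QA$. As $\G_{N}(W^{(\circ, 1)}) = \QA$, this already gives $\QAut(\mathcal{G}_{1}) = \QA$. For the inductive step I would use the structural decomposition $B_{k+1} \cong \C \oplus (M_{N}(\C) \otimes B_{k})$, which exhibits $\mathcal{G}_{k+1}$ as a single root to which one attaches a family of copies of $\mathcal{G}_{k}$ fibred over the building block $M_{N}(\C)$. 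The coaction on the $M_{N}(\C)$ factor factors through $\QA$ (the quantum permutation of the children of the root), while the coaction on the copies of $B_{k}$ is governed by $\QAut(\mathcal{G}_{k})$; the point is to verify that commutation with $\mathcal{A}_{k+1}$ and invariance of $\psi_{k+1}$ force exactly the defining relations of Definition \ref{de:freewreath}, yielding $\QAut(\mathcal{G}_{k+1}) \cong \QAut(\mathcal{G}_{k}) \wr_{\ast} \QA$ and closing the induction.

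An alternative, more Tannakian route is to compare representation categories directly. The restriction functor $\Rep(\QAut(\mathcal{G}_{k})) \to \Rep(\G_{N}(W^{(\circ, k)}))$ induced by $\pi_{k}$ is faithful and essentially surjective, so $\pi_{k}$ is an isomorphism precisely when it is full, i.e. when $\G_{N}(W^{(\circ, k)})$ admits no intertwiners beyond those already forced on $\QAut(\mathcal{G}_{k})$ by the graph data. Since the category of $\QAut(\mathcal{G}_{k})$ is by construction generated by the structure maps of $B_{k}$ together with $\psi_{k}$ and $\mathcal{A}_{k}$, the task is to show that these maps already span the relevant morphism spaces of $\G_{N}(W^{(\circ, k)})$, which by Proposition \ref{prop:linearindependence} are spanned by the linearly independent operators $T_{p}$ for $p$ a projective partition equivalent to $p_{w}$ with $w \in W^{(\circ, k)}$. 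Once the two categories coincide, injectivity of $\pi_{k}$ follows from the faithfulness of the Haar states, exactly as in the final paragraph of the proof of Theorem \ref{thm:iteratedwreathsymetric}.

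The main obstacle is the upper bound in either formulation: producing the coaction only shows the inclusion $\G_{N}(W^{(\circ, k)}) \le \QAut(\mathcal{G}_{k})$, and the real content is to rule out additional quantum symmetry. Concretely, this is the assertion that the quantum graph relations impose precisely the free wreath relations of Definition \ref{de:freewreath} and nothing weaker -- equivalently, that the Tannaka category of $\G_{N}(W^{(\circ, k)})$ is generated by the data $(\psi_{k}, \mathcal{A}_{k})$ and the algebra structure of $B_{k}$. I expect the free wreath product picture to make this cleanest, since the universal property of Definition \ref{de:freewreath} reduces the upper bound to checking that the two coactions above satisfy no relations other than the required ones.
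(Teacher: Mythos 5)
Your overall architecture matches the paper's: induction on $k$, driven by the recursion $\G_{N}(W^{(\circ, k+1)})\cong \G_{N}(W^{(\circ, k)})\wr_{\ast}\QA$ of Theorem \ref{thm:iteratedwreathsymetric}, with the coaction $\alpha$ providing the surjection $\O(\QAut(\mathcal{G}_{k}))\to \O(\G_{N}(W^{(\circ,k)}))$. But the step you yourself flag as ``the main obstacle'' --- ruling out extra quantum symmetry, i.e.\ proving that the quantum graph relations force \emph{exactly} the free wreath relations --- is precisely the content of the theorem, and your proposal does not supply an argument for it; both the ``direct verification'' route and the Tannakian route are left as statements of what would need to be checked. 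This is a genuine gap, not a detail: a priori the graph $(B_{k+1},\psi_{k+1},\mathcal{A}_{k+1})$ could admit strictly more quantum symmetries than the wreath product, and nothing in your outline excludes this. Moreover, your proposed decomposition $B_{k+1}\cong \C\oplus(M_{N}(\C)\otimes B_{k})$ would itself need to be shown invariant under an \emph{arbitrary} coaction commuting with $\mathcal{A}_{k+1}$ and preserving $\psi_{k+1}$ before it can be used; this is not automatic.

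The paper closes this gap without ever proving the upper bound head-on. It produces \emph{two} surjections pointing in opposite directions: first $\Psi:\O(\QAut(\mathcal{G}_{k+1}))\to\O(\G_{N}(W^{(\circ,k+1)}))\cong\O(\QAut(\mathcal{G}_{k})\wr_{\ast}\QA)$ from the universal property of the quantum automorphism group (plus the induction hypothesis and Theorem \ref{thm:iteratedwreathsymetric}); then $\Phi:\O(\QAut(\mathcal{G}_{k})\wr_{\ast}\QA)\to\O(\QAut(\mathcal{G}_{k+1}))$ from the universal property of the free wreath product, by locating inside $\mathcal{G}_{k+1}$ an invariant copy of $\mathcal{G}_{k}$ (the range of $\mathcal{A}_{k+1}$) and an invariant copy of $M_{N}(\C)$ at level one (cut out by the projection $A_{1}^{*}A_{1}$, which commutes with any coaction), and verifying via the factorization $x_{i}=b_{i}'b_{i}''$ that the resulting generators $a(u)=\pi_{1}(u)\otimes\pi_{2}(v)$ generate the whole algebra. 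Injectivity of $\Phi$ is then obtained by checking that the composite $\Psi\circ\Phi$ is injective on fundamental representations, using the already-known representation theory of the wreath product side. If you want to salvage your plan, this sandwich argument is the missing idea: replace ``show the relations are exactly the wreath relations'' by ``build a surjection from the wreath product onto $\QAut(\mathcal{G}_{k+1})$ and detect injectivity after composing with $\Psi$''.
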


\begin{proof}
We will prove the result by induction on $k$. For $k = 1$ this is already known. Assuming now the result for some $k\geqslant 1$, we have a surjective $*$-homomorphism
\begin{align*}
\Psi : \O(\QAut(\mathcal{G}_{k+1})) & \to \O(\G_{N}(W^{(\circ, k+1}))) \\
& = \O(\G_{N}(W^{(\circ, k)})\wr_{\ast}\QAut(M_{N}(\C))) \\
& = \O(\QAut(\mathcal{G}_{k}\wr_{\ast}\QAut(M_{N}(\C))))
\end{align*}
following from the universality of $\O(\QAut(\mathcal{G}_{k+1}))$ and the fact that the right-hand side acts on $\mathcal{G}_{k+1}$. Our strategy will be to construct an inverse to that map using the universal property of free wreath products. This requires finding homomorphic images of both $\O(\QAut(\mathcal{G}_{k}))$ and $\O(\QAut(M_{N}(\C)))$ inside $\O(\QAut(\mathcal{G}_{k+1}))$. To lighten notations, we will write $B$ for $M_{N}(\C)$ in the sequel.

Observe that the range of $\mathcal{A}_{k+1}$ is by definition stable under the action. Moreover, it is exactly the space of all elements such that the last tensorand of each component is $1$, and therefore it is a sub-C*-algebra isomorphic to $B_{k}$. The restriction of $\mathcal{A}_{k+1}$ to it is by definition $\mathcal{A}_{k}$ and the restriction of $\psi_{k+1}$ is a multiple of $\psi_{k}$. In other words, this yields a quantum graph isomorphic to $\mathcal{G}_{k}$ which is stable under the action. As a consequence, there is a $*$-homomorphism
\begin{equation*}
\pi_{1} : \O(\QAut(\mathcal{G}_{k}))\to \O(\QAut(\mathcal{G}_{k+1}))
\end{equation*}
which has the following property which we record for later use : if $b$ is in the range of $\mathcal{A}_{k+1}$, then $\alpha(b)\in \pi(\O(\QAut(\mathcal{G}_{k})))\otimes B_{k+1}$.

Let us now consider $B' = 0\oplus B\oplus 0\oplus \cdots\oplus 0$, which is a sub-C*-algebra of $B_{k+1}$ isomorphic to $B$. We claim that is it also stable under the action. Indeed, observe that $A_{1}^{*}A_{1}$ commutes with the action and is the projection onto $B'$. This means that $\QAut(\mathcal{G}_{k+1})$ acts on $B$. Since that action preserves $\psi$, which is a multiple of the trace, there is a surjective $*$-homomorphism
\begin{equation*}
\pi_{2} : \O(\QAut(M_{N}(\C)))\to \O(\QAut(\mathcal{G}_{k+1})).
\end{equation*}

We can now gather the two $*$-homomorphisms that we found to produce generators satisfying the defining relation of Definition \ref{de:freewreath}. If $u$ is an irreducible representation of $\QAut(\mathcal{G}_{k})$ and if $v$ denotes the fundamental representation of $\QAut(M_{N}(\C))$, we set
\begin{equation*}
a(u) = \pi_{1}(u)\otimes\pi_{2}(v).
\end{equation*}
These are unitary representations satisfying the conditions of Definition \ref{de:freewreath}. The only thing which is not clear is that their coefficients generate $\O(\QAut(\mathcal{G}_{k+1}))$. To see this, we will prove that if $A\subset \O(\QAut(\mathcal{G}_{k+1}))$ is the $*$-subalgebra generated by the coefficients of $a(u)$ for $u$ ranging through all irreducible representations of $\O(\QAut(\mathcal{G}_{k}))$, then $\alpha(B_{k+1})\subset A\otimes B_{k+1}$. To see this, let us consider an arbitrary element $x\in B_{k+1}$. If $x_{i}$ is its component in $B^{\otimes i}$, then it is enough to prove that $\alpha(x_{i})\subset A\otimes B_{k+1}$. Now, we can write
\begin{equation*}
x_{i} = b_{1}\otimes \cdots\otimes b_{i} = (b_{1}\otimes \cdots\otimes b_{i-1}\otimes 1)(1\otimes \cdots\otimes 1\otimes b_{i}) = b_{i}'b_{i}''.
\end{equation*}
Observe that $b_{i}'$ is in the range of $\mathcal{A}_{k+1}$, so that $\alpha(b_{i}')\subset A\otimes B_{k+1}$. As for $b_{i}''$, it can be written as $A_{i-1}\circ A_{i-2}\circ\cdots\circ A_{1}(b)$, hence
\begin{equation*}
\alpha(b_{i}'') = (\id\otimes A_{i-1}\circ A_{i-2}\circ\cdots\circ A_{1})\circ\alpha(b) \in A\otimes B_{k+1}
\end{equation*}
since the coefficients in the left tensorand of $\alpha(b)$ are by definition coefficients of $\pi_{2}(v)$. This completes the argument, proving that by the universal property of free wreath products, there is a surjective $*$-homomorphism
\begin{equation*}
\Phi : \O(\QAut(\mathcal{G}_{k})\wr_{\ast}M_{N}(\C)) \to \O(\QAut(\mathcal{G}_{k+1})).
\end{equation*}

Let now $w$ denote the fundamental representation of $\QAut(\mathcal{G}_{k})$ and $a(w)$ its canonical image as a representation of $\O(\QAut(\mathcal{G}_{k})\wr_{\ast}M_{N}(\C))$. Its coefficients are sent by $\Phi$ to the coefficients in the left tensorand of $\alpha(x)$ for $x\in B_{k}$. These are in turn sent to the corresponding coefficients for the action of $\G_{N}(W^{(\circ, k)})$ which are exactly the coefficients of $V^{\otimes k}\otimes V^{\ast\otimes k}$. Therefore, $\Psi\circ\Phi$ is injective on the $*$-subalgebra generated by the coefficients of $a(u)$ for all irreducible representations $u$ of $\QAut(\mathcal{G}_{k})$. Similarly, the coefficients of $v$ are sent by $\Psi\circ\Phi$ to the corresponding coefficients of $V$. We conclude that the fundamental representation is sent to the fundamental representation, which implies that $\Psi\circ\Phi$ is injective, so that $\Phi$ also is. Since it is surjective by construction, the proof is complete.
\end{proof}

\begin{rem}
It seems plausible that for an arbitrary finite quantum space $(B, \psi)$ the isomorphism
\begin{equation*}
\QAut(\mathcal{G}_{k+1}) \simeq \QAut(\mathcal{G}_{k})\wr_{\ast}\QAut(B, \psi)
\end{equation*}
holds. This could then be seen as an extension of the result of J. Bichon \cite{bichon2004free} for disjoint unions of transitive graphs. Indeed, for a classical rooted $N$-regular tree of depth $k+1$, the root is fixed by the quantum automorphism group because it is the only vertex with degree $N+1$, hence the quantum automorphism group is just the quantum automorphism group of the disjoint union of $N$ rooted $N$-regular trees of depth $k$.
\end{rem}

\bibliographystyle{amsplain}
\bibliography{quantum.bib}

\end{document}